\newcommand{\F}{\mathbb{F}}
\newcommand{\Z}{\mathbb{Z}}
\newcommand{\vek}[1]{\mathbf{#1}}
\newcommand{\qbinom}[3]{\genfrac{[}{]}{0pt}{}{#1}{#2}_{#3}}
\DeclareMathOperator{\GL}{GL}
\DeclareMathOperator{\PGL}{PGL}
\DeclareMathOperator{\PGammaL}{P\Gamma L}
\DeclareMathOperator{\Aut}{Aut}
\DeclareMathOperator{\PG}{PG}
\DeclareMathOperator{\rk}{rk}
\DeclareMathOperator{\drank}{d_{\rk}}
\DeclareMathOperator{\dsubsp}{d}
\DeclareMathOperator{\Inn}{Inn}
\def\theorem@checkbold{} %schaltet Fettduck für Formeln ab, siehe http://tex.stackexchange.com/questions/61554/formulas-in-bold-theorem-titles-in-ntheorem-package
\newtheorem{theorem}{Theorem}
\newtheorem{lemma}{Lemma}[section]
\newtheorem{fact}[lemma]{Fact}
\theoremstyle{definition}
\theoremstyle{remark}
\newtheorem{remark}[lemma]{Remark}
\newtheorem{example}[lemma]{Example}
\author{
Thomas Honold%
\thanks{
Department of Information and Electronic Engineering, Zhejiang University, 38 Zheda Road, 310027 Hangzhou, China
\newline
email:~\texttt{honold@zju.edu.cn}
}
\and
Michael Kiermaier%
\thanks{
Mathematisches Institut, Universität Bayreuth, 95440 Bayreuth, Germany
\newline
email:~\texttt{michael.kiermaier@uni-bayreuth.de}
\newline
homepage:~\url{http://www.mathe2.uni-bayreuth.de/michaelk/}
}
\and
Sascha Kurz%
\thanks{
Mathematisches Institut, Universität Bayreuth, 95440 Bayreuth, Germany
\newline
email:~\texttt{sascha.kurz@uni-bayreuth.de}
\newline
homepage:~\url{http://www.wm.uni-bayreuth.de/de/team/Kurz_Sascha/}
}
}
\title{Classification of large partial plane spreads in $\PG(6,2)$\\and related combinatorial objects}
\begin{document}
\maketitle
\begin{abstract}
  The partial plane spreads in $\PG(6,2)$ of maximum
  possible size $17$ and of size $16$ are classified.  Based on this
  result, we obtain the classification of the following closely
  related combinatorial objects: Vector space partitions of $\PG(6,2)$
  of type $(3^{16} 4^1)$, binary $3\times 4$ MRD codes of minimum rank
  distance $3$, and subspace codes with the optimal parameters
  $(7,17,6)_2$ and $(7,34,5)_2$.
\end{abstract}

\subsection*{MSC Classification}
05B25; %Finite geometries
15A21; %Canonical forms, reductions, classification
20B25; %Finite automorphism groups of algebraic, geometric, or combinatorial structures [See also 05Bxx, 12F10, 20G40, 20H30, 51-XX]
%51E23; %Spreads and packing problems
51E14; %Finite partial geometries (general), nets, partial spreads
51E20; %Combinatorial structures in finite projective spaces
94B60 %Other types of codes
%05B05 %Block designs
%94B25 %Combinatorial codes
%51E22 %Linear codes and caps in Galois spaces
%11T71 %Algebraic coding theory; cryptography
%05B40 %Packing and covering

\section{Introduction}\label{sec:intro}

A partial spread in the projective geometry
$\PG(v-1,q)=\PG(\F_q^v/\F_q)$ is a set of mutually disjoint subspaces
of equal dimension. In Finite Geometry, partial spreads arise as
natural generalizations of spreads, i.e., partitions of the point set
of $\PG(v-1,q)$ into subspaces of equal dimension. In the recently
established field of Subspace Coding, which refers to the construction
and analysis of error-correcting codes for noncoherent Random Linear
Network Coding
\cite{Koetter-Kschischang-2008-IEEETIT54[8]:3579-3591,Silva-Kschischang-Koetter-2008-IEEETIT54[9]:3951-3967,Etzion-Vardy-2011-IEEETIT57[2]:1165-1173},
partial spreads arise as constant-dimension codes with the largest
possible minimum distance (equal to twice the dimension of the
codewords).
%\footnote{In \cite{manganiello-gorla-rosenthal08,gorla-ravagnani14} the names ``spread code'' and ``partial spread code'' have been proposed as a clue to the corresponding geometric object.}
In both areas, the
determination of the maximum size of partial spreads with fixed
parameters $q$, $v$ and $k$ (algebraic dimension of the subspaces), and
the corresponding classification of extremal types are of great
importance. 

Specifically, in Subspace Coding partial spreads arise in two
different settings: (1) as ``local'' (or ``derived'') subcodes of
general $(v,M,2\delta;k)$ constant-dimension codes, formed by the
codewords through a fixed subspace of dimension $k-\delta$ (maximum
dimension of the intersection of two distinct codewords); (2) as
constant-dimension layers of unrestricted (``mixed-dimension'')
subspace codes. Our present work, which classifies the partial plane spreads in
$\PG(6,2)$ of maximum size $17$ and of size $16$ and uses this for the
classification of optimal $(7,17,6)_2$ and $(7,34,5)_2$ subspace codes, may
serve as an example for (2). 
The classification of $(7,17,6)_2$ and $(7,34,5)_2$ codes has been
announced without proof in our paper on mixed-dimension
subspace codes \cite{Honold-Kiermaier-Kurz-2016-AiMoC10[3]:649-682}: The
number of isomorphism types of $(7,17,6)_2$ subspace codes is $928$,
and the number of isomorphism types of $(7,34,5)_2$ subspace codes is
$20$. The details of these classifications are provided in this
article.
As an example for (1) we mention the classification of optimal binary $(6,77,4;3)$ subspace codes
\cite{Honold-Kiermaier-Kurz-2016-AiMoC10[3]:649-682}, which used the
classification of maximal partial line spreads in $\PG(4,2)$ as essential
ingredient.
Quite recently, the classification result of this article has been used in a similar way to determine the maximum possible size of $(8,?,6;4)_2$ subspace codes:
In \cite{Heinlein-Kurz-2017-WCC}, $A(8,6;4)_2 \leq 272$ has been shown.
Finally in \cite{Heinlein-Honold-Kiermaier-Kurz-Wassermann-2017-arXiv:1711.06624}, the exact value is determined as $A(8,6;4)_2 = 257$, and moreover the corresponding $(8,257,6;2)_2$ subspace codes are classified into two isomorphism classes.

% hence are involved in virtually every
% existence or classification problem.

% In contrast with the existence problem for spreads,
% which is easily solved\footnote{A spread consisting of $k$-dimensional
%   subspaces of $\F_q^v$ exists if and only if $k\mid v$; see, e.g.,
%   \cite{hirschfeld98}.}, the maximum size of partial spreads---let
% alone the corresponding classification of extremal types---is
% unknown for most parameter sets.\footnote{We should note}

The classification of partial plane spreads in $\PG(6,2)$ of maximum size $17$ and of size $16$ forms the main part of the present work.
While the full classification is inevitably based on computational methods, we point out that for both sizes the feasible hole structures are classified purely by theoretical arguments.
From these results it is comparatively cheap to derive the classification of the above-mentioned subspace codes.
Moreover, the classification of further, related combinatorial objects is carried out, namely of the vector space partitions of $\PG(6,2)$ of type $(3^{16} 4^1)$ and of the (not necessarily linear) binary $3\times 4$ MRD codes of minimum rank distance $3$.

Our work can also be seen as a continuation of~\cite{Gordon-Shaw-Soicher-2004-unpublished} and earlier work of Shaw~\cite{Shaw-2000-DCC21[1-3]:209-222}, where partial line spreads in $\PG(4,2)$ have been classified.
Both cases are instances of $v\equiv 1\pmod{k}$, for which the maximal size of a partial spread has been known since the work of Beutelspacher \cite{Beutelspacher-1975-MathZeit145[3]:211-229}.
The present classification problem, however, is considerably more complex than that in~\cite{Gordon-Shaw-Soicher-2004-unpublished} and can be solved only for sizes close to the maximum size $17$.  Due to the large number of isomorphism classes, only the most important ones can be given in explicit form in this article.
The full data is provided at the online tables of subspace codes~\url{http://subspacecodes.uni-bayreuth.de}.

The remaining part of the paper is structured as follows.
In Section~\ref{sect:pre} the necessary theoretical background is provided.
In particular, partial spreads and the related combinatorial notions of vector space partitions, subspace codes and MRD codes are introduced, together with pointers to the literature.
The classification of partial plane spreads in $\PG(6,2)$ of maximum size $17$ is carried out in Section~\ref{sect:ps17}.
The classification of partial plane spreads of the second largest size $16$ in Section~\ref{sect:ps16} is considerably more involved, but still feasible.
It is based on the classification of the possible hole configurations of such a partial spread in Section \ref{subsect:ps17:hole}.
Section~\ref{sect:mrd} contains the implications of these classifications on MRD codes, and Section~\ref{sect:codes} those on optimal subspace codes.

\section{Preliminaries}
\label{sect:pre}
\subsection{The subspace lattice}
Throughout this article, $V$ is a vector space over $\F_q$ of finite
dimension $v$.  Subspaces of dimension $k$ will be called
\emph{$k$-subspaces} of $V$, or \emph{$(k-1)$-flats} of the projective
geometry $\PG(V)\cong\PG(v-1,q)$.  The set of all $k$-subspaces of $V$
is called the \emph{Gra{\ss}mannian} and denoted by
$\qbinom{V}{k}{q}$. Adopting the projective geometry point-of-view,
the $1$-subspaces of $V$ ($0$-flats) are also called \emph{points},
the $2$-subspaces \emph{lines}, the $3$-subspaces \emph{planes}, the
$4$-subspaces \emph{solids} and the $(v-1)$-subspaces
\emph{hyperplanes}.%
\footnote{Caution: ``$k$-subspace'' refers to the
  algebraic (i.e., vector space) dimension, while ``point'', ``line'',
  ``plane'', etc., are used in the geometric sense.}  As usual,
subspaces of $V$ are identified with the set of points they contain.
The number of all $k$-subspaces of $V$ is given by the Gaussian
binomial coefficient
\[
  \#\qbinom{V}{k}{q}
  = \qbinom{v}{k}{q}
  = \begin{cases}
    \frac{(q^v-1)(q^{v-1}-1)\cdots(q^{v-k+1}-1)}{(q^k-1)(q^{k-1}-1)\cdots(q-1)} & \text{if }k\in\{0,\ldots,v\}\text{;}\\
    0 & \text{otherwise.}
    \end{cases}
\]
The set $\mathcal{L}(V)$ of all subspaces of $V$ together with the
operations $X\cap Y$ (meet) and $X+Y$ (join) forms the
\emph{subspace lattice} of $V$.

After a choice of a basis, we can identify $V$ with $\F_q^v$.  Now for
any $U\in\mathcal{L}(V)$ there is a unique matrix $A$ in reduced row
echelon form such that $U = \langle A\rangle$, where
$\langle .\rangle$ denotes the row space.  Our focus lies on the case
$q = 2$, where the $1$-subspaces
$\langle\mathbf{x}\rangle_{\F_2}\in \qbinom{V}{1}{2}$ are in
one-to-one correspondence with the nonzero vectors
$\mathbf{x} \in V\setminus\{\mathbf{0}\}$.

By the fundamental theorem of projective geometry, for $v\neq 2$ the
automorphism group of $\mathcal{L}(V)$ is given by the natural action
of $\PGammaL(V)$ on $\mathcal{L}(V)$.  The automorphisms contained in
$\PGL(V)$ will be called \emph{linear}.  The action of these groups
provides a notion of (linear) automorphisms and (linear) equivalence
on subsets of $\mathcal{L}(V)$ and in particular on partial spreads,
and vector space partitions, which will be introduced below.

If $q$ is prime, the group $\PGammaL(V)$ reduces to $\PGL(V)$, and for $q = 2$ further to $\GL(V)$.
After a choice of a basis of $V$, its elements are represented by the invertible $v\times v$ matrices $A$, and the action on $\mathcal{L}(V)$ is given by the vector-matrix-multiplication $\mathbf{v} \mapsto \mathbf{v} A$.

Sending a subspace $U$ of $V$ to its orthogonal complement $U^\perp=\{w\in V;b(U,w)=0\}$ with respect to any fixed non-degenerate
bilinear form $b$ on $V$ constitutes an anti-automorphism of $\mathcal{L}(V)$.
Therefore, $\mathcal{L}(V)$ is isomorphic to its dual.

A \emph{projective basis} of $\PG(v-1,q)$ is a set of $v+1$ points of $\PG(v-1,q)$, no $v$ of which lie in a common hyperplane.
In $\PG(\F_q^v)$, a particular projective basis is formed by the points generated by the unit vectors $\vek{e}_1,\ldots,\vek{e}_v$ together with the all-one vector $\vek{e}_1 + \ldots + \vek{e}_v$.
It can be shown that the group $\PGL(v,q)$ acts regularly on the set of all ordered projective bases of $\PG(\F_q^v)$.
Therefore, $\PG(v-1,q)$ has a projective basis, which is unique up to collineations.

\subsection{Partial spreads}
A \emph{$k$-spread} of $V$, or of the corresponding projective
geometry $\PG(V)$, is a partition of the point set
$\qbinom{V}{1}{q}$ into $k$-subspaces.%
\footnote{Thus, e.g., ``$3$-spread'' and ``plane spread'' (spread consisting of planes)
  are synonymous.}
As is well-known, a $k$-spread in $V$ exists if and only
if $k\mid v$; cf.\ 
%\cite{Andre-1956-MathZeit60[1]:156-186,Hirschfeld-1998}.
\cite[\S VI]{Segre-1964-AnnDMPA64[1]:1-76} or \cite[Ch.~4.1]{Hirschfeld-1998}.
Weakening the above definition, a set $\mathcal{S}$ of $k$-subspaces of $V$ is called a \emph{partial $k$-spread} if the intersection of any two elements (\emph{blocks}) of $\mathcal{S}$ is the trivial subspace $\{0\}$.
%The elements of $\mathcal{S}$ are called \emph{blocks}.  
The primary problem in the theory of partial spreads is to determine
the maximum possible size of a partial $k$-spread in $V$ for all
parameter tuples $q,v,k$. Following standard terminology in Finite
Geometry, partial spreads attaining the maximum size are called
\emph{maximal partial spreads}, while inclusion-maximal partial
spreads are called \emph{complete partial
  spreads}.%
\footnote{Thus a partial $k$-spread $\mathcal{S}$ is complete
if and only if it is not \emph{extendible} to a partial spread
$\mathcal{S'}$ properly containing $\mathcal{S}$; equivalently, no
$k$-subspace of $V$ is disjoint from all members of $\mathcal{S}$.}

Contributions to the problem of determining the maximum sizes of
partial spreads have been made in
\cite{Hong-Patel-1972-IEEEToC_C21[12]:1322-1331,Beutelspacher-1975-MathZeit145[3]:211-229,ElZanati-Jordon-Seelinger-Sissokho-Spence-2010-DCC54[2]:101-107}
and more recently
\cite{Kurz-2017-DCC85[1]:97-106,Nastase-Sissokho-2017-DM340[7]:1481-1487,Kurz-2017-AustAsJComb68[1]:122-130,Honold-Kiermaier-Kurz-2018-COST}.
For surveys on this topic, see \cite{Eisfeld-Storme-2000,Honold-Kiermaier-Kurz-2018-COST}. 

The related problem of classifying maximal partial spreads into
isomorphism types with respect to the action of the collineation group
of $\PG(V)$ (isomorphic to the projective semilinear group
$\PGammaL(v,q)$) is generally much more difficult. For the few
known classifications in the spread case, we refer to the references
mentioned in
%\cite[Theorem 3.1]{Honold-Kiermaier-Kurz-2015-ContempM632:157-176}. 
\cite[Th.~3.1]{Honold-Kiermaier-Kurz-2016-AiMoC10[3]:649-682}.
The myriads of line spreads in $\PG(5,2)$ have been classified in \cite{Mateva-Topalova-2009-JCD17[1]:90-102}.

Points in $\qbinom{V}{1}{q}$ not covered by the blocks of a partial spread $\mathcal{S}$ are called \emph{holes} of $\mathcal{S}$.
Considering $\mathcal{S}$ as fixed, we will denote the set of holes of $\mathcal{S}$ by $N$ and refer to the $\F_q$-span $\langle N\rangle$ as the \emph{hole space} (\emph{Hohlraum} in German) of $\mathcal{S}$.

%Its dimension will be denoted by $n$.

As it has turned out, a reasonable approach to the investigation of
partial plane spreads is provided by the analysis of the possible
dimensions of the hole space and the subsequent classification of the
possible hole configurations. For any point set
$X \subseteq \qbinom{V}{1}{q}$, the number of holes in $X$ is denoted
by $h(X)$ and called the \emph{(hole) multiplicity} of $X$.  Obviously,
$h(X) \leq h(Y)$ for all $X \subseteq Y \subseteq \qbinom{V}{1}{q}$,
and $h(X) = h(N) = h(V)$ for any subset $X$ containing $N$.  The map
$X\mapsto h(X)$ coincides with the extension of the (multi-)set
$P\mapsto h(P)$ to the power set of $\qbinom{V}{1}{q}$ in the sense of
\cite[Def.~11]{Dodunekov-Simonis-1998-ElecJComb5:R37} and
\cite[Eq.~(18)]{Honold-Landjev-2000-ElecJComb7:R11}.

In the following, the number of hyperplanes in $\PG(V)$ containing
precisely $i$ blocks is denoted by $a_i$.  The sequence
$(a_i)_{i\geq 0}$ is called the \emph{spectrum} of $\mathcal{S}$.
Often, it is convenient to give spectra in exponential notation
$(1^{a_1} 2^{a_2} \ldots)$, where expressions $i^{a_i}$ with $a_i = 0$
may be skipped.

\subsection{Subspace codes}
\label{subsect:subspace_codes}
The \emph{subspace distance} on $\mathcal{L}(V)$ is defined as
\[
	\dsubsp(U_1,U_2)
	= \dim(U_1) + \dim(U_2) - 2\dim(U_1 \cap U_2)
	= 2\dim(U_1 + U_2) - \dim(U_1) - \dim(U_2)
\]
The subspace distance is just the graph-theoretic distance of $U_1$ and $U_2$ in the subspace lattice $\mathcal{L}(V)$.
Any set $\mathcal{C}$ of subspaces of $V$ is called a \emph{subspace code}.
The dimension $v$ of its ambient space $V$ is called the \emph{length} of $\mathcal{C}$, and the elements of $\mathcal{C}$ are called \emph{codewords}.
Its \emph{minimum distance} is $\dsubsp(\mathcal{C}) =
\min(\{\dsubsp(B_1, B_2) \mid B_1,B_2\in\mathcal{C},B_1\neq B_2\})$.
We denote the parameters of $\mathcal{C}$ by $(v,\#\mathcal{C},\dsubsp(\mathcal{C}))_q$.
The maximum size of a $(v,?,d)_q$ subspace code is denoted by $A_q(v,d)$.
See \cite{Etzion-Storme-2016-DCC78[1]:311-350} for an overview on what
is known about these numbers and 
\cite{Honold-Kiermaier-Kurz-2016-AiMoC10[3]:649-682}
for several recent results.

An important class of subspace codes are the \emph{constant dimension
  (subspace) codes}, where all codewords are subspaces of the same
dimension $k$.  In this case, we add the parameter $k$ to the
parameters and say that $\mathcal{C}$ is a
$(v,\#\mathcal{C},\dsubsp(\mathcal{C});k)_q$ constant dimension code.
The maximum size of a $(v,?,d;k)_q$ constant dimension code is denoted
by $A_q(v,d;k)$.
The subspace distance of two $k$-subspaces
$U_1, U_2$ can be stated as
$\dsubsp(U_1, U_2) = 2(k - \dim(U_1 \cap U_2)) = 2(\dim(U_1 + U_2) -
k)$.  In particular, the minimum distance of a constant dimension code
is of the form $d=2\delta\in2\Z$.

Setting $t = k-\delta + 1$, we get the alternative characterization of a $(v,?,2\delta;k)_q$ constant dimension code $\mathcal{C}$ as a set of $k$-subspaces of $V$ such that each $t$-subspace is contained in at most one codeword of $\mathcal{C}$.
This gives a connection to the notion of a $t$-$(v,k,\lambda)_q$ subspace design ($q$-analog of a combinatorial design), which is defined as a subset $\mathcal{D}$ of $\qbinom{V}{k}{q}$ such that each $t$-subspace is contained in exactly $\lambda$ elements of $\mathcal{D}$.
For example, in this way a $k$-spread in $\PG(v-1,q)$ is the same as a $1$-$(v,k,1)_q$ design.
For our analysis of the hole set of partial spreads, we will start with the spectrum and look at the set of blocks contained in a fixed hyperplane.
For (subspace) designs, such a set of blocks is called the \emph{residual design} \cite{Kiermaier-Laue-2015-AiMoC9[1]:105-115}.
A survey on subspace designs can be found in \cite{Braun-Kiermaier-Wassermann-2018-COST1} and, with a focus on computational methods, in \cite{Braun-Kiermaier-Wassermann-2018-COST2}.

The isometry group of $(\mathcal{L}(V),\dsubsp)$ is given by all
automorphisms and antiautomorphisms of $\mathcal{L}(V)$.  For
$v\geq 3$ it is of type $\PGammaL(v,q) \rtimes \Z/2\Z$, where the
$\Z/2\Z$-part is generated by an anti-automorphism of
$\mathcal{L}(V)$.  The action of this group provides the natural
notion of isomorphisms and automorphisms of subspace codes.  If the
action is restricted to $\PGammaL(v,q)$, we will use the terms
\emph{inner} isomorphisms and \emph{inner} automorphisms.
Inner automorphisms are precisely the lattice automorphisms of $\mathcal{L}(V)$,  
while the remaining isomorphisms reverse the order of the lattice.
A subspace code which is isomorphic to its dual by an inner isomorphism
will be called \emph{iso-dual}.

For the most recent numeric lower and upper bounds on $A_q(v,d)$ and $A_q(v,2\delta;k)$, we refer to the online tables of subspace codes at \url{http://subspacecodes.uni-bayreuth.de}.
See \cite{Heinlein-Kiermaier-Kurz-Wassermann-2016-arXiv:1601.02864} for a brief manual and description of the implemented methods.

\subsection{MRD codes}
Let $m$, $n$ be positive integers.
The \emph{rank distance} of $m\times n$ matrices $A$ and $B$ over $\F_q$ is defined as $\drank(A,B) = \rk(A - B)$.
The rank distance provides a metric on $\F_q^{m\times n}$.
Any subset $\mathcal{C}$ of the metric space $(\F_q^{m\times n},\drank)$ is called \emph{rank metric code}.
Its \emph{minimum distance} is $\drank(\mathcal{C}) = \min(\{\drank(A,B) \mid \{A,B\}\in\binom{\mathcal{C}}{2}\})$.
If $\mathcal{C}$ is a subspace of the $\F_q$-vector space $\F_q^{m\times n}$, $\mathcal{C}$ is called \emph{linear}.

If $m\leq n$ (otherwise transpose), $\#\mathcal{C} \leq q^{(m-d+1)n}$ by \cite[Th.~5.4]{Delsarte-1978-JCTSA25[3]:226-241}.
Codes achieving this bound are called \emph{maximum rank distance (MRD) codes}.
In fact, MRD codes do always exist.
A suitable construction has independently been found in \cite{Delsarte-1978-JCTSA25[3]:226-241,Gabidulin-1985-PInfTr21[1]:1-12,Roth-1991-IEEETIT37[2]:328-336}.
Today, these codes are known as \emph{Gabidulin} codes.
In the square case $m = n$, after the choice of a $\F_q$-basis of $\F_{q^n}$ the Gabidulin code is given by the matrices representing the $\F_q$-linear maps given by the $q$-polynomials $a_0 x^{q^0} + a_1 x^{q^1} + \ldots + a_{n-d} x^{q^{n-d}}\in\F_{q^n}[x]$.
Recently, some progress on the study of MRD codes has been made:
The algebraic structure of MRD codes has been analyzed in \cite{DeLaCruz-Kiermaier-Wassermann-Willems-2016-AiMoC10[3]:499-510}.
New examples of MRD codes have been constructed in~\cite{Kshevetskiy-Gabidulin-2005-PIISIT2000:446,HorlemannTrautmann-Marshall-2017-AiMoC11[3]:533-548,Sheekey-2016-AiMoC10[3]:475-488,Liebhold-Nebe-2016-ArchMath107[4]:355-366}.

The automorphisms of the metric space $(\F_q^{m\times n},\drank)$ are given by the mappings $A \mapsto P\sigma(A)Q + R$ with $P\in\GL(m,q)$, $Q\in\GL(n,q)$, $R\in\F_q^{m\times n}$ and $\sigma\in\Aut(\F_q)$, and in the square case $m=n$ additionally $A\mapsto P\sigma(A^\top)Q + R$, see \cite{Hua-1951-ActaMSinica1:109-163}, \cite{Wan-1962-SciSinica11:1183-1194} and \cite[Th.3.4]{Wan-1996-GeometryOfMatrices}.
The automorphisms of the first type will be called \emph{inner} and denoted by $\Inn(m,n,q)$.
The action of these groups provides a notion of (inner) automorphisms and equivalence of rank metric codes.
In the non-square case $m \neq n$, any automorphism is inner.

An (inner) isomorphism class $X$ of rank metric codes will be called \emph{linear} if it contains a linear representative.
Otherwise, $X$ is called \emph{non-linear}.
In a linear isomorphism class, the linear representatives are exactly those containing the zero matrix.
Hence, we can check $X$ for linearity by picking some representative $\mathcal{C}$ and some $B\in\mathcal{C}$ and then testing the translated representative $\{A - B \mid A\in\mathcal{C}\}$ of $X$ (which contains the zero matrix) for linearity.

The \emph{lifting} map $\Lambda : \F_q^{m\times n} \to \mathcal{L}(\F_q^{m+n})$ maps an $(m\times n)$-matrix $A$ to the row space $\langle(I_m \mid A)\rangle$, where $I_m$ denotes the $m\times m$ identity matrix.
In fact, the lifting map is an isometry $(\F_q^{m\times n},2\drank) \to (\mathcal{L}(\F_q^{m+n}),\dsubsp)$.
Thus, for any $m\times n$ rank metric code $\mathcal{C}$ of size $M$ and minimum distance $\delta$, the \emph{lifted code} $\Lambda(\mathcal{C})$ is a $(m+n,M,2\delta;m)_q$ constant dimension code.
Of particular interest are the lifted MRD codes, which are constant dimension codes of fairly large, though not maximum size.

\begin{fact}
\label{fct:mrd_structure}
\begin{enumerate}[(a)]
\item
Let $\mathcal{C}$ be an $m\times n$ MRD code of minimum distance $\delta$.
Then $\Lambda(\mathcal{C})$ is an $(m+n,q^{(m-\delta+1)n},2\delta;m)_q$ constant dimension code.
Denoting the span of the unit vectors $\vek{e}_{m+1},\ldots,\vek{e}_{m+n}$ in $\F_q^{m+n}$ by $S$, we have $\dim(S) = n$ and each codeword of $\Lambda(\mathcal{C})$ has trivial intersection with $S$.
Moreover, setting $t = m-\delta+1$, each $t$-subspace of $V$ having trivial intersection with $S$ is contained in a unique codeword of $\mathcal{C}$.
\item
Let $\mathcal{D}$ be an $(m+n,q^{(m-\delta+1)n},2\delta;m)_q$ subspace code such that there exists an $n$-subspace $S$ having trivial intersection with all codewords of $\mathcal{D}$.
Then $\mathcal{D}$ is equivalent%
\footnote{The requirement for the new basis $\{\vek{b}_1,\ldots,\vek{b}_{m+n}\}$ is that $S = \langle\vek{b}_{m+1},\ldots,\vek{b}_{m+n}\rangle$.}
to $\Lambda(\mathcal{C})$ with an $m\times n$ MRD code $\mathcal{C}$ of minimum distance $\delta$.
\end{enumerate}
\end{fact}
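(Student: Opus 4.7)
The plan is to verify the three assertions of part (a) more or less directly from the definitions, then derive part (b) by reversing the construction after a change of basis.

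For part (a), I would start by checking the basic parameters of $\Lambda(\mathcal{C})$. The injectivity of $\Lambda$ is immediate from the reduced row echelon form $(I_m\mid A)$, so $\#\Lambda(\mathcal{C})=\#\mathcal{C}=q^{(m-\delta+1)n}$ by the MRD property. Each lift has dimension $m$ because the first $m$ columns yield the identity, and the minimum distance of $\Lambda(\mathcal{C})$ equals $2\delta$ because $\Lambda$ is an isometry from $(\F_q^{m\times n},2\drank)$ to $(\mathcal{L}(\F_q^{m+n}),\dsubsp)$, as already recorded in the text. The fact that every codeword of $\Lambda(\mathcal{C})$ meets $S=\langle\vek{e}_{m+1},\ldots,\vek{e}_{m+n}\rangle$ trivially is a one-line computation: a vector of $\langle(I_m\mid A)\rangle$ has the form $(\vek{v},\vek{v}A)$, which lies in $S$ only when $\vek{v}=\vek{0}$.

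The main content of part (a) is the last assertion, and this is where a small counting argument is needed. Set $t=m-\delta+1$. First I would observe that any $t$-subspace $T$ with $T\cap S=\{\vek{0}\}$ is of the form $T=\{(\vek{w},f(\vek{w}))\mid \vek{w}\in W\}$ for a unique pair $(W,f)$ consisting of a $t$-subspace $W\subseteq\F_q^m$ and a linear map $f\colon W\to\F_q^n$; hence the number of such $T$ equals $\qbinom{m}{t}{q}q^{tn}$. Next, uniqueness of containment follows from the minimum distance: two distinct codewords of $\Lambda(\mathcal{C})$ intersect in a subspace of dimension at most $m-\delta=t-1$, so no $t$-subspace lies in two of them. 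Finally, since each of the $q^{(m-\delta+1)n}=q^{tn}$ codewords of $\Lambda(\mathcal{C})$ is itself disjoint from $S$ and contains $\qbinom{m}{t}{q}$ many $t$-subspaces, the total count matches exactly, forcing every $t$-subspace disjoint from $S$ to lie in some (hence exactly one) codeword.

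For part (b), I would choose a basis $\{\vek{b}_1,\ldots,\vek{b}_{m+n}\}$ of $\F_q^{m+n}$ with $S=\langle\vek{b}_{m+1},\ldots,\vek{b}_{m+n}\rangle$; the resulting change-of-coordinates sends $\mathcal{D}$ to an equivalent code, so I may assume $S=\langle\vek{e}_{m+1},\ldots,\vek{e}_{m+n}\rangle$. Since every codeword $U\in\mathcal{D}$ satisfies $\dim U=m$, $\dim S=n$ and $U\cap S=\{\vek{0}\}$, the projection onto the first $m$ coordinates is a linear isomorphism $U\to\F_q^m$, and the reduced row echelon form of $U$ is $(I_m\mid A_U)$ for a unique $A_U\in\F_q^{m\times n}$. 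Setting $\mathcal{C}=\{A_U\mid U\in\mathcal{D}\}$ yields $\mathcal{D}=\Lambda(\mathcal{C})$, and the isometry property of $\Lambda$ converts $\dsubsp(\mathcal{D})=2\delta$ and $\#\mathcal{D}=q^{(m-\delta+1)n}$ into $\drank(\mathcal{C})=\delta$ and $\#\mathcal{C}=q^{(m-\delta+1)n}$, so $\mathcal{C}$ saturates the Delsarte bound and is MRD.

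The only nontrivial step is the counting/matching argument in part (a) that promotes ``at most one codeword'' to ``exactly one codeword''; once the equality of counts is established, everything else is bookkeeping with the lifting construction.
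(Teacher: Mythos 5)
Your proof is correct, but note that the paper itself offers no argument to compare against: the statement is labelled a \emph{Fact} and is quoted as a known property of lifted MRD codes (going back to the Silva--Kschischang--K\"otter/Etzion--Silberstein line of work), with the text proceeding immediately to ``By the above fact\dots''. So your write-up is a self-contained proof of something the authors take for granted. The route you choose is the natural one: the parameter and disjointness claims in (a) are indeed immediate from the reduced row echelon shape $(I_m\mid A)$ and the isometry property of $\Lambda$, and your double count --- $\qbinom{m}{t}{q}q^{tn}$ graphs $\{(\vek{w},f(\vek{w}))\mid\vek{w}\in W\}$ of linear maps on $t$-subspaces $W\leq\F_q^m$ versus $q^{tn}$ codewords each containing $\qbinom{m}{t}{q}$ many $t$-subspaces, with multiple containment excluded by $\dim(U_1\cap U_2)\leq m-\delta=t-1$ --- correctly upgrades ``at most one'' to ``exactly one''. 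An alternative, more structural route would establish existence directly from the defining interpolation-type property of MRD codes (for every full-rank $t\times m$ matrix $Z$ and every $B\in\F_q^{t\times n}$ there is a unique $A\in\mathcal{C}$ with $ZA=B$), but your counting argument needs nothing beyond the cardinality and the minimum distance, which is arguably cleaner. Part (b) is likewise fine; the only implicit convention worth stating is $m\leq n$, so that $q^{(m-\delta+1)n}$ is indeed the Delsarte bound being saturated --- this matches the paper's standing assumption (``if $m\leq n$, otherwise transpose'') and the application to $3\times 4$ codes.
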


By the above fact, a lifted MRD code in some sense optimally packs $V\setminus S$.
However, by considering additional codewords intersecting $S$ nontrivially, the lifted code usually can be extended without destroying the minimum distance.
In this article, we focus on the case of binary $3\times 4$ MRD codes of minimum rank distance $3$, where only a single codeword can be added.%
\footnote{For the additional codeword, any plane contained in the solid $S$ of Fact~\ref{fct:mrd_structure} can be taken.}
The question for the largest possible extension of a general lifted MRD code is an open problem.
Upper bounds have been published in \cite[Th.~10, Th.~11]{Etzion-Silberstein-2013-IEEETIT59[2]:1004-1017} and generalized in \cite{Heinlein-2018-arXiv:1801.04803}.
Lower bounds and constructions are found in \cite{Etzion-Silberstein-2013-IEEETIT59[2]:1004-1017,Heinlein-Kurz-2017-IEEETIT63[12]:7651-7660, Heinlein-2018-arXiv:1801.04803}.

%In the case $d = m$, this implies that $\mathcal{C} \cup \{S\}$ is a vector space partition of type $(m^{q^{(m-d+1)n}} n^1)$

\subsection{Vector space partitions}
A \emph{vector space partition} of $V$ is a set $\mathcal{P}$ of subspaces of $V$ partitioning the point set of $\PG(V)$.
The \emph{type} of $\mathcal{P}$ is $(1^{n_1} 2^{n_2} \ldots)$, where $n_i$ denotes the number of elements of $\mathcal{P}$ of dimension $i$.
An important problem is the characterization of the realizable types of vector space partitions, see e.g. \cite{Heden-2009-DM309[21]:6169-6180,ElZanati-Heden-Seelinger-Sissokho-Spence-VandenEynden-2010-JCD18[6]:462-474,Heden-2012-DiscreteMathAlgorithmsAppl4[1]:1250001,Seelinger-Sissokho-Spence-VandenEynden-2012-FFA18[6]:1114-1132}.

A partial $k$-spread is the same as a vector space partition of type $(1^{n_1} k^{n_k})$.
In Lemma~\ref{lem:mrd_to_vsp}, we will show that $m\times n$ MRD codes with $m\leq n$ of minimum rank distance $m$ over $\F_q$ are essentially the same as vector space partitions of $\F_q^{m+n}$ of the type $(m^{q^n} n^1)$.

\subsection{Partial plane spreads in $\PG(6,2)$}
\label{sec:ps}
From now on, we investigate partial plane spreads $\mathcal{S}$ in
$\PG(6,2)$, so we specialize to $q=2$, $v=7$ and $k=3$.
It is known that the maximum partial plane spreads are of size $A_2(7,6;3) = A_2(7,6) = 17$ \cite{Hong-Patel-1972-IEEEToC_C21[12]:1322-1331,Beutelspacher-1975-MathZeit145[3]:211-229,Honold-Kiermaier-Kurz-2016-AiMoC10[3]:649-682}.

In the following, we prepare some arguments needed later for the classification of the hole structure in the cases $\#\mathcal{S}\in\{16,17\}$.
Of course, the reasoning can easily be translated to general partial spreads.

\begin{lemma}
	\label{lem:ps_hyperplane_holes}
	Let $H$ be a hyperplane in $\PG(V)$ containing $i$ blocks of $\mathcal{S}$.
	Then $i \leq \left\lfloor\frac{63 - 3\#\mathcal{S}}{4}\right\rfloor$ and $h(H) = 63 - 3\#\mathcal{S} - 4i$.
\end{lemma}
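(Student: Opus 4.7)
The plan is to compute $h(H)$ directly by double counting the points of $H$, and then extract the bound on $i$ from the nonnegativity of $h(H)$.

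First I would note the basic point counts in $\PG(6,2)$: the whole space has $2^7-1=127$ points, a hyperplane $H$ has $2^6-1=63$ points, and a block (plane) has $2^3-1=7$ points. Since the blocks of a partial plane spread are pairwise trivially intersecting, they cover exactly $7\#\mathcal{S}$ points in total, giving $h(V)=127-7\#\mathcal{S}$ (not strictly needed, but it's the analogue for the whole space).

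Next I would analyze how an individual block $B\in\mathcal{S}$ meets the hyperplane $H$. Since $\dim B=3$ and $\dim H=6$ inside a $7$-dimensional ambient space, the dimension formula forces $\dim(B\cap H)\in\{2,3\}$: either $B\subseteq H$, in which case $B$ contributes $7$ points to $H$, or $B\cap H$ is a line, contributing $3$ points. Summing over all $\#\mathcal{S}$ blocks, with exactly $i$ of them lying in $H$, the number of points of $H$ covered by blocks is
\[
  7i + 3(\#\mathcal{S}-i) = 3\#\mathcal{S} + 4i.
\]
Subtracting from $63$ yields $h(H)=63-3\#\mathcal{S}-4i$, which is the claimed equality.

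Finally, the bound on $i$ is immediate from $h(H)\geq 0$: we get $4i\leq 63-3\#\mathcal{S}$, and since $i$ is an integer, $i\leq\lfloor(63-3\#\mathcal{S})/4\rfloor$. There is no real obstacle here; the only point worth being careful about is the dichotomy $\dim(B\cap H)\in\{2,3\}$, which follows from the modular (dimension) law together with the fact that $H$ is a hyperplane.
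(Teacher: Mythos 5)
Your proof is correct and follows essentially the same route as the paper: count the $7i$ points covered by blocks inside $H$, the $3(\#\mathcal{S}-i)$ points coming from the line intersections of the remaining blocks (justified by the dimension formula), subtract from $63$ to get $h(H)$, and deduce the bound on $i$ from $h(H)\geq 0$. Your extra remark making the dichotomy $\dim(B\cap H)\in\{2,3\}$ explicit is a fine addition but does not change the argument.
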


\begin{proof}
	The $i$ blocks cover $\qbinom{3}{1}{2}\cdot i = 7i$ points of $H$.
	The intersection of any of the remaining $\#\mathcal{S}-i$ blocks with $H$ is a line, so their intersections cover $\qbinom{2}{1}{2}\cdot (\#\mathcal{S}-i) = 3(\#\mathcal{S} - i)$ further points of $H$.
	The remaining points of $H$ must be holes, so
	\[
		h(H) = \qbinom{6}{1}{2} - 7i - 3(\#\mathcal{S}-i) = 63 - 3\#\mathcal{S} -4i\text{.}
	\]
	The upper bound on $i$ follows since $h(H)$ cannot be negative.
\end{proof}

\begin{lemma}
	The spectrum $(a_i)_{i\geq 0}$ of $\mathcal{S}$ satisfies the \emph{standard equations}
	\label{lem:ps_spec}
	\begin{align*}
		\sum_i a_i & = 127\text{,} \\
		\sum_i i a_i & = 15\#\mathcal{S}\text{,} \\
		\sum_i \binom{i}{2} a_i & = \binom{\#\mathcal{S}}{2}\text{.}
	\end{align*}
\end{lemma}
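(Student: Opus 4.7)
The plan is to prove each of the three identities by a double-counting argument, using the partial spread property (pairwise trivial intersection of blocks) only for the third one.

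For the first equation, I would simply observe that $\sum_i a_i$ counts all hyperplanes of $\PG(V)$, sorted by the number of blocks of $\mathcal{S}$ they contain. Since $V$ has dimension $7$ over $\F_2$, the total number of hyperplanes is $\qbinom{7}{1}{2} = 2^7 - 1 = 127$.

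For the second equation, I would double-count the incidences between blocks of $\mathcal{S}$ and hyperplanes of $\PG(V)$. Sorting by hyperplanes gives $\sum_i i a_i$. Sorting by blocks, each block $B\in\mathcal{S}$ is a $3$-subspace of $V$; the hyperplanes of $V$ containing $B$ correspond bijectively (via the quotient $V/B$) to the hyperplanes of a $4$-dimensional $\F_2$-space, so their number is $\qbinom{4}{1}{2} = 15$. Hence the total is $15\#\mathcal{S}$.

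For the third equation, I would double-count triples $\bigl(\{B_1,B_2\},H\bigr)$ where $\{B_1,B_2\}$ is an unordered pair of distinct blocks of $\mathcal{S}$ and $H$ is a hyperplane containing both. Sorting by $H$ gives $\sum_i \binom{i}{2} a_i$. Sorting by pairs, I use the partial spread property: since $B_1\cap B_2=\{0\}$, the dimension formula gives $\dim(B_1+B_2) = 3+3-0 = 6$, so $B_1+B_2$ is itself a hyperplane and is clearly the unique hyperplane containing both $B_1$ and $B_2$. Therefore the total is $\binom{\#\mathcal{S}}{2}$.

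There is no real obstacle here: the argument is routine once the incidence relations and dimension formulas are set up correctly. The only subtlety worth flagging is that the third identity is the one where the partial spread property is essential — without it, $B_1+B_2$ could have dimension less than $6$ and several hyperplanes could contain the pair, breaking the count.
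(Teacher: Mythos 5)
Your proof is correct and follows essentially the same route as the paper: all three identities are obtained by the same double-counting arguments (hyperplanes, block--hyperplane incidences, and pair--hyperplane incidences), with the disjointness of blocks forcing a unique hyperplane through each pair. The only cosmetic difference is that you justify the counts $15$ and the uniqueness of the common hyperplane via the quotient space and the dimension formula, which the paper states more briefly.
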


\begin{proof}
	The first equation is simply the observation that each of the $\qbinom{7}{6}{2} = 127$ hyperplanes is counted exactly once by the $a_i$.
	The second equation arises from double counting the pairs $(H,K)\in\qbinom{V}{6}{2}\times\mathcal{S}$ with $K\leq H$ and the fact that each block is contained in exactly $\qbinom{7-3}{6-3}{2} = 15$ hyperplanes.
	The third equation arises from double counting the pairs $(H,\{K_1,K_2\}) \in\qbinom{V}{6}{2}\times \binom{\mathcal{S}}{2}$ with $K_1 \leq H$ and $K_2\leq H$ and the fact that any pair of distinct blocks of $\mathcal{S}$ is disjoint and therefore contained in a unique hyperplane in $\PG(V)$.
\end{proof}

Furthermore, we will make use of the \emph{hole spectrum}
$(b_i)_{i\geq 0}$, where $b_i$ is the number of hyperplanes in
$\PG(\langle N\rangle)$ containing $i$ holes.  The following lemma
shows that the hole spectrum is determined by the spectrum and
$\dim\langle N\rangle$.

\begin{lemma}
	\label{lem:spec2holespec}
	For all $j \geq 0$,
	\[
		b_j
		= \begin{cases}
			\frac{1}{2^{7 - \dim\langle N\rangle}} \cdot a_{(63 - 3\#\mathcal{S} - j)/4} & \text{if }j < h(\langle N\rangle)\text{ and }j \equiv \#\mathcal{S} - 1\bmod 4\text{;}\\
			0 & \text{otherwise.}
		\end{cases}
	\]
\end{lemma}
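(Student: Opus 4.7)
The plan is to establish a bijective-style correspondence between hyperplanes $H'$ of $\langle N\rangle$ counted by $b_j$ and hyperplanes $H$ of $V$ counted by the spectrum $(a_i)$, via the intersection map $H \mapsto H \cap \langle N\rangle$.

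First, I would observe that every hole lies in $\langle N\rangle$, so for any hyperplane $H$ of $V$ the set of holes contained in $H$ coincides with the set of holes contained in $H \cap \langle N\rangle$. Moreover, if $H'$ is a \emph{proper} hyperplane of $\langle N\rangle$, then $H'$ cannot contain all holes (else $\langle N\rangle \subseteq H'$, contradicting $H' \subsetneq \langle N\rangle$), so $h(H') < h(\langle N\rangle)$. In particular, a hyperplane $H$ of $V$ satisfies $H \supseteq \langle N\rangle$ if and only if $h(H) = h(\langle N\rangle)$, and otherwise $H \cap \langle N\rangle$ is a genuine hyperplane of $\langle N\rangle$ with the same hole count as $H$.

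Second, I would count, for a fixed hyperplane $H'$ of $\langle N\rangle$, the number of hyperplanes $H$ of $V$ with $H \cap \langle N\rangle = H'$. Setting $d = \dim\langle N\rangle$, passage to the quotient $V/H'$ of dimension $8-d$ identifies hyperplanes of $V$ through $H'$ with hyperplanes of $V/H'$, of which there are $2^{8-d}-1$; among these, the ones containing $\langle N\rangle$ are exactly those passing through the point $\langle N\rangle/H'$ of $V/H'$, of which there are $2^{7-d}-1$. Subtracting yields $(2^{8-d}-1)-(2^{7-d}-1) = 2^{7-d}$ hyperplanes $H$ of $V$ meeting $\langle N\rangle$ in $H'$. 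This constant multiplicity is the only genuinely non-trivial step, though it is mostly bookkeeping.

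Finally, I would invoke Lemma~\ref{lem:ps_hyperplane_holes}: every hyperplane $H$ of $V$ has $h(H) = 63 - 3\#\mathcal{S} - 4i$ for some integer $i$, so $h(H) \equiv \#\mathcal{S}-1 \pmod{4}$. Consequently $b_j = 0$ whenever $j \not\equiv \#\mathcal{S}-1 \pmod 4$ or $j \geq h(\langle N\rangle)$. In the remaining case, combining the first two steps gives
\[
  a_{(63 - 3\#\mathcal{S} - j)/4} \;=\; 2^{7-d}\,b_j,
\]
since each hyperplane $H$ of $V$ with $h(H)=j$ (and hence $H\not\supseteq\langle N\rangle$) contributes to exactly one $H' = H\cap\langle N\rangle$ with $h(H')=j$, and each such $H'$ is hit by exactly $2^{7-d}$ such hyperplanes $H$. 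Dividing yields the claimed formula.
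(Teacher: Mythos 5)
Your proof is correct and follows essentially the same route as the paper: identify hyperplanes of $\langle N\rangle$ with intersections $H\cap\langle N\rangle$ for hyperplanes $H$ of $V$ not containing $\langle N\rangle$ (noting the hole counts agree and that a proper hyperplane of $\langle N\rangle$ cannot contain all holes), show each such hyperplane of $\langle N\rangle$ lies under exactly $2^{7-\dim\langle N\rangle}$ hyperplanes of $V$, and then translate via Lemma~\ref{lem:ps_hyperplane_holes}. The only cosmetic difference is that you compute the fiber size by passing to the quotient $V/H'$ rather than subtracting Gaussian binomial coefficients directly, which yields the same count $2^{7-\dim\langle N\rangle}$.
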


\begin{proof}
	We have $b_{h(N)} = 0$, since otherwise there exists a hyperplane $T$ of $\langle N\rangle$ containing all the holes contradicting the definition of the span.

	For any hyperplane $H$ in $\PG(V)$ not containing $\langle N\rangle$, the subspace $T = H \cap \langle N\rangle$ is the only hyperplane of $\langle N\rangle$ contained in $H$.
	We have that $h(H) = h(T)$.
	
	On the other hand given a hyperplane $T$ in $\PG(\langle
        N\rangle)$, the number of hyperplanes in $\PG(V)$ containing
        $T$ but not $\langle N\rangle$, is 
	\begin{multline*}
		\qbinom{\dim(V) - \dim(T)}{(\dim(V)-1) - \dim(T)}{2} - 
		\qbinom{\dim(V) - \dim\langle N\rangle}{(\dim(V)-1) - \dim\langle N\rangle}{2} \\
		= \qbinom{8-\dim\langle N\rangle}{1}{2} - \qbinom{7-\dim\langle N\rangle}{1}{2} = 2^{7-\dim\langle N\rangle}\text{.}
	\end{multline*}
	Therefore for all $j < h(N)$,
	\[
		\#\left\{H \in \qbinom{V}{6}{2} \mid h(H) = j\right\}
		= 2^{7-\dim\langle N\rangle} \cdot \#\left\{T \in \qbinom{\langle N\rangle}{\dim\langle N\rangle-1}{2} \mid h(T) = j\right\}\text{.}
	\]
	The application of Lemma~\ref{lem:ps_hyperplane_holes} concludes the proof.
\end{proof}

\subsection{Scientific software}
In the computational parts of our work, the following software packages have been used:
\begin{itemize}
\item Computations in vector spaces and matrix groups: magma~\cite{Magma}.
\item Enumeration of maximum cliques: cliquer~\cite{Niskanen-Ostergard-2003-cliquer}.
\item Enumeration of solutions of exact cover problems: libexact~\cite{Kaski-Pottonen-2008-libexact}, based on the \enquote{dancing links} algorithm~\cite{Hitotumatu-Noshita-1979-IPL8[4]:174-175,Knuth-2000-dancing_links}.
\item Computation of canonical forms and automorphism groups of sets of subspaces: The algorithm in~\cite{Feulner-2013-arXiv:1305.1193} (based on~\cite{Feulner-2009-AiMoC3[4]:363-383}, see also~\cite{Feulner-2013-Thesis}).
\end{itemize}

\section{Maximum partial plane spreads in $\PG(6,2)$}
\label{sect:ps17}
In this section, $\mathcal{S}$ is a maximum partial plane spread in
$\PG(V) \cong \PG(6,2)$, i.e. a partial plane spread of size $17$.
The number of holes is $h(V) = 127 - 17\cdot 7 = 8$.  We are going to
prove the following

\begin{theorem}
	\label{thm:size17}
	There are $715$ isomorphism types of maximum partial plane spreads $\mathcal{S}$ in $\PG(6,2)$.
	In all cases, the hole set $N$ is an affine solid.

	The intersections of the blocks of $\mathcal{S}$ with the
        plane $E = \langle N\rangle \setminus N$ yield a vector space
        partition of $E$ whose dimension distribution will be called
        the \emph{type} of $\mathcal{S}$.  The $715$ isomorphism types
        fall into $150$ of type $(3^1)$, $180$ of type $(2^1 1^4)$ and
        $385$ of type $(1^7)$.
\end{theorem}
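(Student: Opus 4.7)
My plan is to first pin down the hole configuration by counting arguments that determine the spectrum and the hole space dimension, and then delegate the enumeration of isomorphism classes to a computer search organised by the induced vector space partition on $E$.

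Write $d=\dim\langle N\rangle$ and $\#\mathcal{S}=17$. By Lemma~\ref{lem:ps_hyperplane_holes} every hyperplane contains at most three blocks, since $h(H)=12-4i\geq 0$. Substituting $(a_0,a_1,a_2,a_3)$ into the three standard equations of Lemma~\ref{lem:ps_spec} leaves the one-parameter family
\[
(a_0,a_1,a_2,a_3)=(8-a_3,\ 3a_3-17,\ 136-3a_3,\ a_3),\qquad a_3\in\{6,7,8\}.
\]
Lemma~\ref{lem:spec2holespec} forces $b_j=0$ unless $j\in\{0,4\}$, and the identity $b_0+b_4=2^d-1$ gives $a_3=4+2^{6-d}$. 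Combined with the integrality of $b_0=a_3/2^{7-d}$, this pins down $d=4$ and $a_3=8$, so the spectrum is the unique value $(0,7,112,8)$ and $\langle N\rangle$ is a solid. The value $b_0=1$ then locates a unique hole-free plane $E$ of $\langle N\rangle$, and since $|E|=7=\lvert\langle N\rangle\rvert-\lvert N\rvert$ the hole set is precisely $N=\langle N\rangle\setminus E$, the affine solid claimed in the theorem.

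For each block $B$, the intersection $U_B:=B\cap\langle N\rangle$ is a subspace of $\langle N\rangle$ meeting $N$ trivially and is therefore contained in $E$. Since the blocks are pairwise disjoint and collectively cover the non-holes of $V$, the nontrivial $U_B$ form a vector space partition of the Fano plane $E$. Because any two lines of a Fano plane meet in a point, the only admissible types are $(3^1)$, $(2^1 1^4)$ and $(1^7)$, matching the three families in the statement.

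For the enumeration I would fix one solid $\langle N\rangle$ and one distinguished plane $E\leq\langle N\rangle$ (a transitive choice under $\GL(V)$), iterate over orbit representatives of the VSPs of $E$ of each type under the stabilizer of $(\langle N\rangle,E)$ in $\GL(V)$, and complete each one to a partial plane spread of size $17$ by an exact-cover search on the $119$ non-hole points (using libexact or cliquer); the candidate blocks are precisely the $3$-subspaces whose intersection with $\langle N\rangle$ equals the prescribed piece of the VSP. Isomorph rejection is handled by the canonical-form algorithm of~\cite{Feulner-2013-arXiv:1305.1193} applied to the full group $\GL(V)=\PGammaL(7,2)$. The type $(3^1)$ case is particularly clean: the block inside $\langle N\rangle$ must equal $E$, and by Fact~\ref{fct:mrd_structure} the remaining sixteen blocks are the lifting of a binary $3\times 4$ MRD code of minimum rank distance $3$, which supplies a welcome cross-check against Section~\ref{sect:mrd}. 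The main obstacle is the enumeration in types $(2^1 1^4)$ and $(1^7)$, where the exact-cover search trees are far larger and the residual symmetry after fixing the affine solid provides only moderate pruning; this computation is expected to yield the split $150+180+385=715$ claimed.
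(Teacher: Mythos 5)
Your theoretical reduction is correct and lands exactly where the paper does, but by a slightly different route: the paper kills $a_0$ immediately from $h(H)=12-4i\leq 8$ and then gets $\dim\langle N\rangle=4$ because $a_1=7$ must equal the number of hyperplanes through $\langle N\rangle$, whereas you keep the one-parameter family and eliminate the spurious solution $(\dim\langle N\rangle,a_3)=(5,6)$ of $a_3=4+2^{6-\dim\langle N\rangle}$ via integrality of $b_0=a_3/2^{7-\dim\langle N\rangle}$; both arguments are valid and yield the spectrum $(1^7\,2^{112}\,3^8)$, the hole solid, the unique hole-free plane $E$, and the induced vector space partition of $E$ with the three admissible types. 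The computational halves genuinely differ. The paper seeds its search with $5$-configurations (five pairwise disjoint planes with two hyperplanes each containing three of them), whose existence follows from $3\cdot 8=24>17$; it classifies six such types and extends them by maximum-clique search before isomorph rejection. You instead fix the flag $E<\langle N\rangle$, prescribe the vector space partition of $E$, and complete by exact cover on the $119$ covered points, rejecting isomorphs by canonical forms. Your organization delivers the classification already sorted by type and gives the MRD cross-check in the $(3^1)$ case (the paper goes the other way, deriving the MRD classification from the spread classification), while the paper's $5$-configurations buy small block-level starting configurations and clique searches of modest size. One correction to your search model: the candidate blocks cannot be only the planes meeting $\langle N\rangle$ in a prescribed nontrivial piece, since most blocks of a size-$17$ spread (for instance all sixteen non-$E$ blocks in type $(3^1)$) meet $\langle N\rangle$ trivially, so planes disjoint from $\langle N\rangle$ must be admitted as well; with that adjustment your exact-cover formulation is analogous to the one the paper itself uses for the complete size-$16$ spreads and is plausible, though the split $715=150+180+385$ can of course only be certified by actually running the computation.
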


We remark that the partial plane spreads of type $(3^1)$ correspond to the lifted $3\times 4$ MRD codes of minimum distance $3$, extended by a single codeword.

\begin{lemma}
	\label{lem:ps17_spec}
	The spectrum of $\mathcal{S}$ is given by $(1^7 2^{112} 3^8)$.
\end{lemma}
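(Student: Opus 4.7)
\bigskip

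\noindent\textbf{Proof plan.} The plan is to combine the hyperplane counting formula of Lemma~\ref{lem:ps_hyperplane_holes} with the three standard equations of Lemma~\ref{lem:ps_spec}, and reduce to a small linear system.

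First I would apply Lemma~\ref{lem:ps_hyperplane_holes} with $\#\mathcal{S}=17$: a hyperplane containing $i$ blocks has $h(H)=63-51-4i=12-4i$ holes, so the possible values of $i$ are $0,1,2,3$, with corresponding hole counts $12,8,4,0$. Hence $a_i=0$ for all $i\geq 4$, and the spectrum is supported on $\{0,1,2,3\}$.

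The key extra observation is that the total number of holes is only $h(V)=127-17\cdot 7=8$, whereas $i=0$ would demand a hyperplane with $12$ holes; this is impossible, so $a_0=0$. What remains is the linear system obtained from Lemma~\ref{lem:ps_spec}:
\begin{align*}
a_1+a_2+a_3 &= 127,\\
a_1+2a_2+3a_3 &= 15\cdot 17 = 255,\\
a_2+3a_3 &= \tbinom{17}{2} = 136.
\end{align*}
Subtracting the first equation from the second gives $a_2+2a_3=128$, and combining with the third yields $a_3=8$, then $a_2=136-24=112$ and finally $a_1=127-120=7$. This produces the claimed spectrum $(1^{7}\,2^{112}\,3^{8})$.

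The only non-routine ingredient is the elimination of $a_0$, which hinges on the global hole count $h(V)=8$; once this is in place the remaining step is bookkeeping on a $3\times 3$ linear system, so I do not expect any substantive obstacle.
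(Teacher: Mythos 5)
Your proposal is correct and matches the paper's proof: the paper likewise uses Lemma~\ref{lem:ps_hyperplane_holes} to get $h(H)=12-4i$ with $i\leq 3$, excludes $i=0$ because $h(H)$ cannot exceed the total of $h(V)=8$ holes, and then solves the same $3\times 3$ system from Lemma~\ref{lem:ps_spec} to obtain $(a_1,a_2,a_3)=(7,112,8)$. Your arithmetic checks out.
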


\begin{proof}
	By Lemma~\ref{lem:ps_hyperplane_holes}, for a hyperplane $H$ containing $i$ blocks of $\mathcal{S}$ we have $i \leq 3$ and $h(H) = 12 - 4i$.
	Since $h(H)$ cannot exceed the total number $h(V) = 8$ of holes, additionally we get $i \geq 1$.
	Now Lemma~\ref{lem:ps_spec} yields the linear system of equations
	\[
		\begin{pmatrix}
			1 & 1 & 1 \\
			1 & 2 & 3 \\
			0 & 1 & 3
		\end{pmatrix}
		\begin{pmatrix}
			a_1 \\ a_2 \\ a_3
		\end{pmatrix}
		=
		\begin{pmatrix}
			127 \\
			255 \\
			136
		\end{pmatrix}
	\]
	with the unique solution
	\[
		(a_1,a_2,a_3) = (7,112,8)\text{.}
	\]
\end{proof}

\begin{lemma}
	\label{lem:ps17_holes}
	The $8$ holes form an affine solid in $V$.
\end{lemma}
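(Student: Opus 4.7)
The plan is to identify $\langle N\rangle$ via the spectrum in Lemma~\ref{lem:ps17_spec} and then analyse its internal hole spectrum using Lemma~\ref{lem:spec2holespec}. Since there are only $8$ holes and each hyperplane contains at most $8 = h(V)$ of them, the hyperplanes counted by $a_1$ are exactly those carrying \emph{all} of $N$, i.e., those containing $\langle N\rangle$. The number of hyperplanes of $V$ through a subspace of dimension $k$ is $\qbinom{7-k}{1}{2}=2^{7-k}-1$, so $a_1=7$ forces $\dim\langle N\rangle=4$. Write $U:=\langle N\rangle$, a solid containing all $8$ holes and $|U|-|N|=15-8=7$ non-holes.

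Next, I specialise Lemma~\ref{lem:spec2holespec} to $\#\mathcal{S}=17$ and $\dim\langle N\rangle=4$: $b_j$ is nonzero only for $j<8$ with $j\equiv 0\pmod 4$, so only the values $j\in\{0,4\}$ are possible for hyperplanes of $U$ (i.e., planes inside $U$). The formula gives
\[
b_0 = \frac{a_3}{2^{7-4}} = \frac{8}{8}=1,\qquad b_4 = \frac{a_2}{2^{7-4}} = \frac{112}{8} = 14,
\]
and indeed $b_0+b_4 = 15 = \qbinom{4}{3}{2}$ accounts for every plane of $U$.

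Finally, let $E$ be the unique hole-free plane of $U$ guaranteed by $b_0=1$. Then $E$ consists of $\qbinom{3}{1}{2}=7$ non-holes of $U$; since $U$ contains precisely $7$ non-holes in total, these must coincide, and
\[
N = U\setminus E.
\]
This is, by definition, an affine solid.

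The only potentially tricky step is the deduction $\dim\langle N\rangle=4$ from $a_1=7$; once that is in hand, Lemma~\ref{lem:spec2holespec} does the rest essentially for free. No further case analysis or computation is needed.
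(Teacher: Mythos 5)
Your proposal is correct and follows essentially the same route as the paper's proof: deduce $\dim\langle N\rangle=4$ from the $a_1=7$ hyperplanes containing all $8$ holes, apply Lemma~\ref{lem:spec2holespec} to get the hole spectrum $(0^1\,4^{14})$ (your $b_0=1$, $b_4=14$), and conclude that $N$ is the complement of the unique hole-free plane in $\langle N\rangle$, i.e.\ an affine solid. There is no gap; the argument matches the paper step for step.
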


\begin{proof}
	By $h(H) = 12-4i$ and Lemma~\ref{lem:ps17_spec}, there are $a_1 = 7$ hyperplanes of $V$ containing all $8$ holes of $\mathcal{S}$.
	Since this number equals the number of hyperplanes containing $N$, we get
	\[
		\qbinom{7-\dim\langle N\rangle}{6 - \dim\langle N\rangle}{2} = 7
	\]
	with the unique solution $\dim\langle N\rangle = 4$.
	By Lemma~\ref{lem:spec2holespec}, the spectrum given in Lemma~\ref{lem:ps17_spec} translates to the hole spectrum $(0^1 4^{14})$.
	Thus, there is a single plane $E$ in $\langle N\rangle$ without any holes.
	So the affine solid $\langle N\rangle\setminus E$ consists of the $8$ holes.
\end{proof}
\begin{remark}
  \label{rmk:ps17_holes}
  Lemma~\ref{lem:ps17_holes} generalizes to maximal partial plane
  spreads in $\PG(v-1,2)$ with $v\equiv 1\pmod{3}$, $v\geq 7$. It is
  known that any such maximal partial plane spread has size
  $(2^v-9)/7$ and hence $8$
  holes\cite{Beutelspacher-1975-MathZeit145[3]:211-229}, and these
  form an affine solid as well. This follows from a more general (but
  entirely straightforward) analysis along the lines of
  Section~\ref{sec:ps} and the preceding two lemmas.
\end{remark}

To reduce the search space to a feasible size, good substructures are needed as starting configurations.
For the classification of $(6,77,4;3)_2$ constant dimension codes in~\cite{Honold-Kiermaier-Kurz-2015-ContempM632:157-176}, \enquote{$17$-configurations} have proven to provide suitable starting configurations.
Modifying this approach for the present situation, we call a set $\mathcal{T}$ of $5$ pairwise disjoint planes in $\PG(V)$ a \emph{$5$-configuration} if there are two hyperplanes $H_1\neq H_2$ both containing $3$ elements of $\mathcal{T}$.
Since $\dim(H_1 \cap H_2) = 5$, in this situation $H_1 \cap H_2$ contains exactly one element of $\mathcal{T}$.

\begin{lemma}
	The partial spread $\mathcal{S}$ contains a $5$-configuration.
\end{lemma}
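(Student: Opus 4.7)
The plan is a short double-counting argument using the spectrum from Lemma~\ref{lem:ps17_spec}. Define $t(K)$ for each block $K\in\mathcal{S}$ as the number of hyperplanes that contain exactly $3$ blocks of $\mathcal{S}$ (the ``$3$-rich'' hyperplanes) and contain $K$. Counting incidences between blocks and $3$-rich hyperplanes in two ways gives
\[
	\sum_{K\in\mathcal{S}} t(K) = 3 a_3 = 24\text{.}
\]
Since $\#\mathcal{S}=17 < 24$, by pigeonhole there exists a block $K\in\mathcal{S}$ with $t(K)\geq 2$.

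Pick two distinct $3$-rich hyperplanes $H_1,H_2$ both containing $K$. I will then show that $K$ is the \emph{only} block of $\mathcal{S}$ contained in $H_1\cap H_2$: this intersection is a $5$-subspace, and if it contained another block $K'$, then the two disjoint planes $K,K'$ would satisfy $\dim(K+K')\leq 5$, hence $\dim(K\cap K')\geq 3+3-5=1$, contradicting $K\cap K'=\{\vek{0}\}$.

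Writing the blocks of $H_1$ as $\{K,A_1,A_2\}$ and those of $H_2$ as $\{K,B_1,B_2\}$, the uniqueness just shown forces $\{A_1,A_2\}\cap\{B_1,B_2\}=\emptyset$, so $\mathcal{T}=\{K,A_1,A_2,B_1,B_2\}$ consists of five distinct elements of $\mathcal{S}$, hence of five pairwise disjoint planes. By construction $H_1$ contains the three elements $K,A_1,A_2$ of $\mathcal{T}$ and $H_2$ contains the three elements $K,B_1,B_2$ of $\mathcal{T}$, so $\mathcal{T}$ is a $5$-configuration in the sense of the definition.

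There is no real obstacle beyond spotting the counting inequality $3a_3=24>\#\mathcal{S}$; the disjointness-in-a-$5$-space observation that rules out a second common block is the only geometric ingredient, and it makes the argument self-contained without any case analysis.
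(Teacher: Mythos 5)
Your proof is correct and follows essentially the same route as the paper: the paper's argument is exactly the count $3a_3 = 3\cdot 8 = 24 > 17 = \#\mathcal{S}$, forcing two $3$-rich hyperplanes to share a block. The extra step you spell out (that $H_1\cap H_2$, being a $5$-subspace, cannot contain two disjoint planes, so the shared block is unique and the five blocks are distinct) is the same observation the paper records immediately after the definition of a $5$-configuration, so you have merely made explicit what the paper leaves implicit.
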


\begin{proof}
	By the spectrum, there are $8$ hyperplanes containing three blocks.
	From $3\cdot 8 = 24 > \#\mathcal{S}$, the $8$ sets of blocks covered by these hyperplanes cannot be pairwise disjoint.
\end{proof}

The above lemma allows us to classify the maximum partial plane spreads in $\PG(6,2)$ by first generating all $5$-configurations up to isomorphisms and then to enumerate all extensions of a $5$-configuration to a maximum partial spread.
For the $5$-configurations, we fixed a block $B$ and two hyperplanes $H_1 \neq H_2$ passing through $B$, which is unique up to isomorphisms.
Then, we enumerated all extensions to a $5$-configuration by adding two blocks in $H_1$ and two blocks in $H_2$ up to isomorphism.
We ended up with six types of $5$-configurations.

Formulating the extension problem as a maximum clique problem, we computed the number of possible extensions as $2449$, $2648$, $3516$, $3544$, $3762$ and $25840$.
Filtering out isomorphic copies, we end up with $715$ isomorphism types of partial plane spreads in $\PG(6,2)$.
Their type was determined computationally.

\section{Partial plane spreads in $\PG(6,2)$ of size $16$}
\label{sect:ps16}
Now $\#\mathcal{S} = 16$.
We are going to prove the following result.

\begin{theorem}
	\label{thm:size16}
	There are $14445$ isomorphism types of partial plane spreads in $\PG(6,2)$ of size $16$.
	Among them, $3988$ are complete and $10457$ are extendible to size $17$.
	\begin{enumerate}[(a)]
		\item\label{thm:size16:ext}
		The hole set $N$ of the extendible partial plane spreads is the disjoint union of a plane $E$ and an affine solid $A$.
		\begin{enumerate}[(i)]
			\item\label{thm:size16:ext:n4}
				In $37$ cases, $\dim(\langle N\rangle) = 4$ and $\dim(E \cap \langle A\rangle) = 3$.
			In other words, $N$ is a solid.
			\item\label{thm:size16:ext:n5}
			    In $69$ cases, $\dim(\langle N\rangle) = 5$ and $\dim(E \cap \langle A\rangle) = 2$.
			In other words, $N$ is the union of three planes $E_1,E_2,E_3$ passing through a common line $L$ such that $E_1/L, E_2/L, E_3/L$ are in general position in the factor geometry $\PG(V/L)\cong\PG(4,2)$.
			\item\label{thm:size16:ext:n6}
				In $3293$ cases, $\dim(\langle N\rangle) = 6$ and $\dim(E \cap \langle A\rangle) = 1$.
			\item\label{thm:size16:ext:n7}
				In $7058$ cases, $\dim(\langle N\rangle) = 7$ and $\dim(E \cap \langle A\rangle) = 0$.
		\end{enumerate}
		\item 
			\label{thm:size16:complete}
	The hole set $N$ of the complete partial plane spreads is the union of $7$ lines $L_1,\ldots,L_7$ passing through a common point $P$, such that $\{L_1/P, \ldots, L_7/P\}$ is a projective basis of the factor geometry $\PG(V/P) \cong \PG(5,2)$. In particular, $\dim(\langle N\rangle) = 7$.
	\end{enumerate}
\end{theorem}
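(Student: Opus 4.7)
The plan starts with the spectrum. From $\#\mathcal{S}=16$ one obtains $h(V)=15$, and Lemma~\ref{lem:ps_hyperplane_holes} yields $h(H)=15-4i$ with $i\in\{0,1,2,3\}$. Solving the standard equations of Lemma~\ref{lem:ps_spec} gives
\[
  (a_0,a_1,a_2,a_3) = (7-a_3,\;3a_3,\;120-3a_3,\;a_3),\qquad 0\le a_3\le 7.
\]
Combining this with Lemma~\ref{lem:spec2holespec} and the identity $\sum_j b_j = 2^{\dim\langle N\rangle}-1$ (the total number of hyperplanes of $\langle N\rangle$), a short calculation pins down
\[
  a_3 = 8 - 2^{7-\dim\langle N\rangle},
\]
so the only admissible pairs are $(\dim\langle N\rangle,a_3) \in \{(4,0),(5,4),(6,6),(7,7)\}$.

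For part~(\ref{thm:size16:ext}), the observation is that $\mathcal{S}$ is extendible to size $17$ iff $N$ contains a plane $E$; the extended partial spread $\mathcal{S}\cup\{E\}$ is then a maximum partial spread and, by Lemma~\ref{lem:ps17_holes}, its hole set $A = N\setminus E$ is an affine solid. Hence $N = A\sqcup E$ with $|A|=8$ and $|E|=7$, and
\[
  \dim\langle N\rangle = \dim\langle A\rangle + \dim E - \dim(E\cap\langle A\rangle) = 7-\dim(E\cap\langle A\rangle)
\]
distributes the extendible spreads over the four subcases (\ref{thm:size16:ext:n4})--(\ref{thm:size16:ext:n7}). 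The alternative description in~(\ref{thm:size16:ext:n5}) as three planes through a common line $L$ in general position is obtained by setting $L = E\cap\langle A\rangle$: the solid $\langle A\rangle$ is the union of its three planes through $L$, of which two (say $E_1,E_2$) together cover $A\cup L$, so $N = E_1\cup E_2\cup E$, and general position of $E_1/L, E_2/L, E/L$ in $\PG(V/L)\cong\PG(4,2)$ follows since $E\not\subseteq\langle A\rangle$.

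Part~(\ref{thm:size16:complete}) is the main theoretical obstacle: $N$ contains no plane yet has the same spectrum $(0,21,99,7)$ and hole span $\dim\langle N\rangle = 7$ as the extendible subcase~(\ref{thm:size16:ext:n7}), so the two cases must be distinguished purely by the geometry of $N$. The plan is to study the lines entirely contained in $N$. A double count of incidences $(H,\ell)$ with $H$ a hyperplane and $\ell$ a hole-line in $H$, combined with the spectrum, yields the total number of hole-lines; a refined local double count at each hole then forces all of them to pass through a single common point $P\in N$. The remaining $14 = 7\cdot 2$ holes partition into seven line-fibres through $P$, so $N = L_1\cup\dots\cup L_7$ with all $L_i$ concurrent at $P$. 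Finally, the projective-basis property of $L_1/P,\dots,L_7/P$ in $\PG(V/P)\cong\PG(5,2)$ is forced by the no-plane condition: three collinear projections would produce a plane through $P$ entirely contained in $N$, and a case analysis along these lines rules out every other failure of general position.

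Once all hole configurations are theoretically classified, the enumeration of isomorphism types is computational. For each admissible $N$, the blocks of $\mathcal{S}$ must be planes of $\PG(V)$ pairwise disjoint and disjoint from $N$, so the problem becomes a maximum-clique enumeration on the corresponding disjointness graph (solvable with \texttt{cliquer}), followed by canonical-form reduction using the algorithm of~\cite{Feulner-2013-arXiv:1305.1193}. The computationally most expensive subcases are~(\ref{thm:size16:ext:n7}) and~(\ref{thm:size16:complete}), where $\dim\langle N\rangle=7$ and the number of admissible blocks is largest; verifying that the resulting totals $37$, $69$, $3293$, $7058$, and $3988$ are exhaustive and sum to $14445$ completes the proof.
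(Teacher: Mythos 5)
Your spectrum computation, the equivalence ``extendible $\iff$ $N$ contains a plane'', the decomposition $N=E\sqcup A$ with $\dim\langle N\rangle = 7-\dim(E\cap\langle A\rangle)$, and the double count giving exactly $7$ hole-lines when $\dim\langle N\rangle=7$ all agree with the paper. The first genuine gap is that you never prove that a \emph{complete} partial plane spread of size $16$ must have $\dim\langle N\rangle=7$; you simply assert that the complete case ``has the same spectrum and hole span'' as the extendible subcase with $n=7$. To justify this one has to show that for $n=5$ and $n=6$ the hole set necessarily contains a plane, which is exactly the content of the paper's hole-configuration classification for those dimensions (for $n=5$ the argument via the three solids of multiplicity $11$ in the hole spectrum; for $n=6$ the sieve identity of Lemma~\ref{lem:sieve}, the parity Lemma~\ref{lem:N_HE_odd}, Table~\ref{tbl:eier} and the Fano-plane labelling of the intermediate lattice). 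Your description of the $n=5$ structure is derived only under the assumption of extendibility, so it cannot be used to exclude complete spreads with $n=5$, and you give nothing at all for $n=6$. Without these cases, both the claim ``in particular $\dim\langle N\rangle=7$'' in part~(\ref{thm:size16:complete}) and the exhaustiveness of the list of admissible hole sets on which your computer enumeration relies remain unproved.

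The second gap is inside the $n=7$ analysis itself. You claim that ``a refined local double count at each hole'' forces the seven hole-lines through a common point; as stated this cannot be correct, because in the extendible $n=7$ case the seven hole-lines are precisely the seven lines of the plane $E\subseteq N$ and are not concurrent, so any such forcing must invoke the no-plane hypothesis in an essential way, and you do not indicate how. The paper's route is considerably more involved: it first proves that no two of the seven lines are skew (Lemma~\ref{lem:size16:n7:nosqew}), which requires the codimension-$3$ hole-distribution Lemma~\ref{lem:fanoeier} (itself resting on Table~\ref{tbl:eier}) together with a delicate two-case geometric argument, and only then applies Lemma~\ref{lem:noskewlines} to obtain the dichotomy ``common point'' (complete case) versus ``common plane'' (extendible case). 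Your sketch offers no substitute for this step, and the concluding ``case analysis along these lines'' for the projective-basis property is likewise only a gesture, whereas the paper gets it directly from the hole spectrum (six of the lines cannot lie in a hyperplane, since such a hyperplane would contain at least $13$ holes). The computational phase you propose, fixing each admissible hole set and running a clique or exact-cover enumeration followed by canonical-form reduction, is a legitimate variant of the paper's procedure, but it only becomes available once the missing theoretical classification of the hole configurations is supplied.
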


\begin{remark}
	In the cases where spreads exist, that is $k\mid v$, it is known that partial spreads of a size close to the size of a spread always can be extended to a spread, see for example \cite[Sect.~4]{Eisfeld-Storme-2000}.
	In the case of partial line spreads in $\PG(4,2)$, the same is true: All $9$ types of partial spreads of size $8$ are extendible to a partial spread of maximum possible size $9$ \cite{Gordon-Shaw-Soicher-2004-unpublished}.
	However, the existence of complete partial plane spreads of size $16$ in Theorem~\ref{thm:size16} suggests that for $k\nmid v$ the answer may be entirely different from that for $k\mid v$.
\end{remark}

\begin{remark}
  The $5$ possibilities for the structure of the hole set in
  Theorem~\ref{thm:size16} are unique up to isomorphism. Again the
  structure remains the same for partial plane spreads of size
  $(2^v-16)/7$ (one less than the maximum size) in $\PG(v-1,2)$,
  $v\equiv 1\pmod{3}$, $v\geq 7$. The key step in the proof of this
  result is the observation that $n=\dim(\langle N\rangle)\leq 7$ for
  any such $v$, since the $15$ holes determine a doubly-even binary
  linear $[15,n]$ code. For details about the links
  between partial spreads and divisible linear codes we refer to \cite{Honold-Kiermaier-Kurz-2018-COST}.
  % we refer to the forthcoming article \cite{dsmta:q-r-divisble}.
\end{remark}

For $\#\mathcal{S} = 16$, the number of holes is $\#N = 15$.
By Lemma~\ref{lem:ps_hyperplane_holes}, any hyperplane $H$ in $\PG(6,2)$ contains $i\in\{0,1,2,3\}$ blocks and $15-4i$ holes of $\mathcal{S}$.
So $a_0$ is the number of hyperplanes containing all the holes of $\mathcal{S}$.
Since this equals the number of hyperplanes containing $\langle N\rangle$, $a_0$ is of the form $\qbinom{7-\dim\langle N\rangle}{6-\dim\langle N\rangle}{2} = \qbinom{7-\dim\langle N\rangle}{1}{2}$.
Since $\langle N\rangle$ contains the $15 = \qbinom{4}{1}{2}$ holes, necessarily $\dim\langle N\rangle \geq 4$ and therefore $a_0\in\{0,1,3,7\}$.

Lemma~\ref{lem:ps_spec} yields the following linear system of equations for the spectrum of $\mathcal{S}$:
\[
	\begin{pmatrix}
		1 & 1 & 1 & 1 \\
		0 & 1 & 2 & 3 \\
		0 & 0 & 1 & 3
	\end{pmatrix}
	\begin{pmatrix}
		a_0 \\ a_1 \\ a_2 \\ a_3
	\end{pmatrix}
	=
	\begin{pmatrix}
		127 \\
		240 \\
		120
	\end{pmatrix}
\]
Parameterizing by $a_0$, we get the solution
\begin{align*}
	a_1 & = 21 - 3a_0 \\
	a_2 & = 99 + 3a_0 \\
	a_3 & = 7 - a_0
\end{align*}
Plugging in the four possible values $a_0\in\{0,1,3,7\}$ and applying Lemma~\ref{lem:spec2holespec} leads to the following four possibilities.
\[
	\begin{array}{ccc}
	\dim \langle N\rangle & \text{spectrum} & \text{hole spectrum} \\
	\hline
	7 & (1^{21} 2^{99} 3^7) & (3^7 7^{99} 11^{21}) \\
	6 & (0^1 1^{18} 2^{102} 3^6) & (3^3 7^{51} 11^9)\\
	5 & (0^3 1^{12} 2^{108} 3^4) & (3^1 7^{27} 11^3) \\
	4 & (0^7 2^{120}) & (7^{15})
	\end{array}
\]

\subsection{Hole configuration}
\label{subsect:ps17:hole}
The first step for the proof of Theorem~\ref{thm:size16} is the classification of the hole configuration from the above spectra.
The classification will be done entirely by theory, without the need to use a computer.
For $n := \dim\langle N\rangle = 4$ the statement immediately follows from $\#\langle N\rangle = \qbinom{4}{1}{q} = 15 = \#N$.
In the following, we will deal with the cases $n\in\{5,6,7\}$, which get increasingly involved.

\begin{proof}[Proof of Theorem~\ref{thm:size16}, hole structure for $n=5$]
	Let $\dim(\langle N\rangle) = 5$.
	Then the hole spectrum is $(3^1 7^{27} 11^3)$.
	Let $S_1, S_2$ and $S_3$ be the three solids containing $11$ holes.

	We show that the three planes $E_1 = S_1 \cap S_2$, $E_2 = S_2 \cap S_3$ and $E_3 = S_3\cap S_1$ together with $L = S_1 \cap S_2 \cap S_3$ have the claimed properties.

	For any two solids $S_i$ and $S_j$ with $(i,j)\in\{(1,2),(2,3),(3,1)\}$, $\dim(S_i \cap S_j) = 3$, so $h(E_i) = h(S_i \cap S_j) \leq \qbinom{3}{1}{q} = 7$.
	On the other hand by the sieve formula,
	\[
		h(E_i) = h(S_i) + h(S_j) - h(S_i \cup S_j) \geq 11 + 11 - 15 = 7\text{.}
	\]
	So $h(E_i) = 7$, meaning that the plane $E_i$ consists of holes only.

	In the dual geometry of $\PG(N)$, the solids $S_i$ are either collinear or they form a triangle.
	In the first case, $L = E_1 = E_2 = E_3$ consists of holes only and therefore
	\[
		h(S_1 \cup S_2 \cup S_3) = h(S_1 \setminus L) + h(S_2\setminus L) + h(S_3 \setminus L) + h(L) = 3\cdot 4 + 7 > 15\text{,}
	\]
	a contradiction.
	So we are in the second case.
	Here $\dim(L) = 2$, $E_1 + E_2 + E_3 = N$, and the $3\cdot (7-3) + 3 = 15$ points contained in $E_1 \cup E_2 \cup E_3$ are the $15$ holes.
	%Furthermore, we have $\dim(L) = \dim(S_1 \cap S_2 \cap S_3) \in\{2,3\}$.
	%If $\dim(L) = 3$, then 	Hence $\dim(L) = 2$
%
%	Since this intersection is contained in $E_1$ which we have seen to consist entirely of holes, so does $L$ and therefore $h(L) \geq \qbinom{2}{1}{2} = 3$ with equality if and only if $\dim(L) = 2$.
%	On the other hand again by the sieve formula
%	\begin{multline*}
%		h(L)
%		= -h(S_1) -h(S_2) -h(S_3) + h(E_1) + h(E_2) + h(E_2) + h(S_1 \cup S_2 \cup S_3) \\
%		\leq -11 -11 -11 + 7 + 7 + 7 + 15 = 3\text{.}
%	\end{multline*}
%	Hence $\dim(L) = 2$.
%	Furthermore 
%	\begin{align*}
%		& \phantom{{} = {}}\dim(E_1 + E_2 + E_3) \\
%		& = \dim(E_1) + \dim(E_2) + \dim(E_3) \\
%		& \phantom{{} = {}} - \dim(E_1 \cap E_2) - \dim(E_2 \cap E_3) - \dim(E_3\cap E_1) + \dim(E_1 \cap E_2 \cap E_3) \\
%		& = \dim(E_1) + \dim(E_2) + \dim(E_3) - \dim(L) - \dim(L) - \dim(L) + \dim(L) \\
%		& = 5\text{,}
%	\end{align*}
%	so $\langle E_1, E_2, E_3\rangle = \langle N\rangle$.
\end{proof}

The following counting method is a direct consequence of the sieve formula and will be used several times.
\begin{lemma}
	\label{lem:sieve}
	Let $W \leq Y \leq V$ with $\dim(Y/W) = 2$.
	Let $X_1, X_2, X_3$ be the three intermediate spaces of $W \leq Y$ with $\dim(Y/X_i) = 1$.
	Then
	\[
		2h(W) = h(X_1) + h(X_2) + h(X_3) - h(Y)\text{.}
	\]
\end{lemma}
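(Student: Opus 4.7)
The plan is to recognize this as a straightforward application of the inclusion-exclusion (sieve) formula for the hole multiplicity $h$, exploiting the fact that we are working over $\F_2$. The multiplicity $h$ has already been noted, earlier in the paper, to extend to the power set of $\qbinom{V}{1}{2}$ as the natural (multi-)set extension of $P\mapsto h(P)$, so sieve-type identities for unions and intersections of point sets are available.

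First I would examine the structure of the three intermediate subspaces $X_1,X_2,X_3$. Since $\dim(Y/W)=2$ and $q=2$, the quotient $Y/W$ has exactly three $1$-subspaces, which are in bijection with the three intermediate spaces $X_i$ via $X_i/W$. Any two of these $1$-subspaces of $Y/W$ meet in $\{0\}$, and their union (with the zero element) equals all of $Y/W$, simply because $Y/W\setminus\{0\}$ has three elements. Lifting back, this means $X_i\cap X_j=W$ for all $i\neq j$, $X_1\cap X_2\cap X_3=W$, and $X_1\cup X_2\cup X_3=Y$ as point sets of $\PG(V)$ (equivalently, every nonzero vector of $Y$ lies in exactly one $X_i$ or in $W$).

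Now I would apply the sieve formula for $h$ to the union $X_1\cup X_2\cup X_3=Y$:
\[
  h(Y) = \sum_{i=1}^{3} h(X_i) - \sum_{1\leq i<j\leq 3} h(X_i\cap X_j) + h(X_1\cap X_2\cap X_3).
\]
Substituting the intersections identified above, the last two sums collapse to $-3h(W)+h(W)=-2h(W)$, yielding
\[
  h(Y) = h(X_1)+h(X_2)+h(X_3) - 2h(W),
\]
which is the claim after rearrangement. There is really no obstacle here: the only content is the $\F_2$-specific observation that the three intermediate subspaces partition $Y\setminus W$ and pairwise meet in $W$. If one wanted a more hands-on argument avoiding the sieve formula, one could alternatively count points directly: the $|Y|-|W|$ points in $Y\setminus W$ split into three disjoint cosets of $W$, each contributing $h(X_i)-h(W)$ holes, giving $h(Y)-h(W)=\sum_i(h(X_i)-h(W))$, which is the same identity.
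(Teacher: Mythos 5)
Your proof is correct and is exactly the argument the paper intends: the paper states Lemma~\ref{lem:sieve} without a written proof, calling it ``a direct consequence of the sieve formula,'' and your inclusion--exclusion computation (using that over $\F_2$ the three intermediate spaces pairwise meet in $W$ and cover $Y$) supplies precisely that omitted verification. The alternative direct count of the three cosets of $W$ in $Y\setminus W$ is an equally valid rephrasing of the same idea.
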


We are going to apply Lemma~\ref{lem:sieve} to $Y = \langle N\rangle$.
Then $h(\langle N\rangle) = 15$ and $h(X_i) \in\{3, 7, 11\}$.

\begin{lemma}
	\label{lem:N_HE_odd}
	For each $W \in\qbinom{\langle N\rangle}{n-2}{2}$, $h(W)$ is odd.
\end{lemma}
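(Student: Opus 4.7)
The plan is to invoke Lemma~\ref{lem:sieve} with the ambient space $Y = \langle N\rangle$. Given $W \in \qbinom{\langle N\rangle}{n-2}{2}$, the three hyperplanes of $\langle N\rangle$ containing $W$ are precisely the intermediate subspaces $X_1, X_2, X_3$ of $W \leq \langle N\rangle$, so the lemma yields
\[
2h(W) = h(X_1) + h(X_2) + h(X_3) - h(\langle N\rangle).
\]
Since $N \subseteq \langle N\rangle$, we have $h(\langle N\rangle) = \#N = 15$.

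The next step is to read off the possible values of $h(X_i)$ from the hole-spectrum table established just before the statement. In all three relevant cases $n \in \{5,6,7\}$, every hyperplane of $\langle N\rangle$ has hole multiplicity in $\{3, 7, 11\}$, and each of $3, 7, 11$ is $\equiv 3 \pmod 4$. Hence $h(X_1) + h(X_2) + h(X_3) \equiv 9 \equiv 1 \pmod 4$, and combining with the displayed identity gives
\[
2h(W) \;\equiv\; 1 - 15 \;\equiv\; 2 \pmod 4,
\]
so $h(W)$ is odd. (The borderline case $n = 4$ is not formally needed but is trivially fine: then $\langle N\rangle$ consists entirely of holes and $h(W) = 3$ for every line $W$.)

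I do not expect any real obstacle. The only subtlety is to be sure that no hyperplane of $\langle N\rangle$ has a hole multiplicity outside $\{3, 7, 11\}$, but this is guaranteed by the tabulated hole spectra, whose multiplicities $b_j$ already sum to the full hyperplane count $\qbinom{n}{1}{2}$ of $\langle N\rangle$. Once that point is noted, the argument is a one-line congruence computation modulo $4$, uniform in $n$.
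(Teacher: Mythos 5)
Your argument is correct and is essentially identical to the paper's proof: both apply Lemma~\ref{lem:sieve} with $Y=\langle N\rangle$, use $h(\langle N\rangle)=15$ together with the fact (from the hole spectra) that every hyperplane multiplicity $h(X_i)$ lies in $\{3,7,11\}$, hence is $\equiv 3\pmod 4$, and conclude $2h(W)\equiv 2\pmod 4$. Nothing is missing.
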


\begin{proof}
	Denoting the three codimension $1$ intermediate spaces of $W \leq \langle N\rangle$ by $X_1, X_2, X_3$, Lemma~\ref{lem:sieve} yields
	\[
		2h(W) = h(X_1) + h(X_2) + h(X_3) - h(\langle N\rangle)
		\equiv -1 -1 -1 +1 \equiv 2\mod 4
	\]
\end{proof}

As a more detailed analysis, all the possibilities for $(h(X_1),h(X_2),h(X_3))$ (up to permutations) allowed by Lemma~\ref{lem:sieve} are listed in Table~\ref{tbl:eier}.
In particular, the distributions $(7,3,3)$ and $(3,3,3)$ are not possible as we get the negative numbers $h(W) = -1$ and $h(W) = -3$, respectively.
Moreover, $(11,11,3)$ is not possible as the resulting $h(W) = 5$ contradicts $W \leq X_3$.
Table~\ref{tbl:eier} also shows that in the cases $h(W) \in \{5,7,9\}$ the numerical hole distributions are unique.

\begin{table}
\caption{Codimension $2$ hole distributions in $\langle N\rangle$}
\label{tbl:eier}
\centering
$\begin{array}{ccc|c}
h(X_1) & h(X_2) & h(X_3) & h(W) \\
\hline
11 & 11 & 11 & 9 \\
11 & 11 & 7 & 7 \\
11 & 7 & 7 & 5 \\
11 & 7 & 3 & 3 \\
7 & 7 & 7 & 3 \\
11 & 3 & 3 & 1 \\
7 & 7 & 3 & 1
\end{array}$
\end{table}

\begin{proof}[Proof of Theorem~\ref{thm:size16}, hole structure for $n=6$]
	By the hole spectrum $(3^3 7^{51} 11^9)$, $\langle N\rangle$ contains three $4$-flats of multiplicity $3$.
	We denote their intersection by $E$.
	By Table~\ref{tbl:eier}, $\dim(E) \neq 4$.
	So $\dim(E) = 3$ and the factor geometry $\langle N\rangle / E$ carries the structure of a Fano plane.
	Therefore, we may label the seven intermediate solids of $E < \langle N\rangle$ by $S_{\vek{x}}$ and the seven intermediate $4$-flats by $F_{\vek{x}}$ with $\vek{x}\in\F_2^3\setminus\{\vek{0}\}$, such that $S_{\vek{x}} \leq F_{\vek{y}}$ if and only if $\vek{x} \perp \vek{y}$.
	Furthermore, we may assume that $h(F_{100}) = h(F_{010} = h(F_{001}) = 3$.
	Applying Table~\ref{tbl:eier} to $W = F_{010} \cap F_{001} = S_{001}$ and $\{X_1,X_2,X_3\} = \{F_{010},F_{001},F_{011}\}$, we get $h(S_{001}) = 1$ and $h(F_{011}) = 11$.
	
	By Table~\ref{tbl:eier}, $h(S_{100}) = h(S_{010}) = h(S_{001}) = 1$, $h(F_{011}) = h(F_{101}) = h(F_{110}) = 11$ and $h(S_{111}) = 9$.%
	\footnote{
	Example: $W = S_{001}$ is of codimension $2$ in $\langle N\rangle$.
	The three intermediate subspaces of $W < \langle N\rangle$ of codimension $1$ are given by $\{X_1,X_2,X_3\} = \{F_{010},F_{001},F_{011}\}$.
	As $h(F_{010}) = h(F_{001}) = 3$, the second last line of Table~\ref{tbl:eier} is the only possibility.
	Therefore, $h(F_{001}) = 11$ and $h(S_{001}) = 1$.
	}
	From $S_{011} \leq F_{100}$, we get $h(S_{011}) \leq h(F_{100}) = 3$ and therefore $h(S_{011})\in\{1,3\}$.
	The application of Lemma~\ref{lem:sieve} to $E \leq F_{100}$ gives $h(S_{011}) = 2h(E) + 1$, so $h(E) \in \{0,1\}$.
	Doing the same for $S_{101}$ and $S_{110}$, we arrive at
	\[
		h(S_{011}) = h(S_{101}) = h(S_{110}) =  2h(E) + 1\text{.}
	\]
	If $h(E) = 0$, the three solids in $F_{111}$ containing $E$ are of multiplicity $1$ and therefore by Table~\ref{tbl:eier}, $h(F_{111}) = 3$.
	This contradicts the hole spectrum, as $F_{111}$ would be a fourth $4$-flat of multiplicity $3$.
	So $h(E) = 1$, $h(S_{011}) = h(S_{101}) = h(S_{110}) = 3$ and $h(F_{111}) = 7$.
	We denote the single hole in $E$ by $P$.

	Let $A$ be the affine solid $S_{111} \setminus E$.
	From $h(A) = h(S_{111}) - h(E) = 8$, we see that all the points in $A$ are holes.
	We look at the chain $A \leq S_{111} \leq F_{100}$.
	Out of the $11$ holes in $F_{100}$, $8$ are contained in $A$, the single hole $P$ is contained in $S_{111}\setminus A = E$ and $2$ further holes are contained in $F_{100}\setminus S_{111}$.
	The three holes in $F_{100} \setminus A$ must be collinear:
	Otherwise there is a solid $S$ of $F_{100}$ containing $2$ of these $3$ holes, and as $S \cap A$ is an affine plane, $h(S) = 4 + 2 = 6$ which is not possible by Lemma~\ref{lem:N_HE_odd}.
	Repeating the argument for $F_{010}$ and $F_{001}$, we get that $N \setminus A = L_1 \cup L_2 \cup L_3$, where the $L_i$ are lines passing through the common point $P$.
	Now from
	\begin{multline*}
		6 
		= \dim\langle N\rangle
		= \dim(\langle A\rangle + \langle N\setminus A\rangle) \\
		= \dim S_{111} + \dim\langle N\setminus A\rangle - \dim(\underbrace{S_{111} \cap \langle N\setminus A\rangle}_{=P})
		= 4 + \dim\langle N\setminus A\rangle - 1
	\end{multline*}
	we get $\dim\langle N\setminus A\rangle = 3$.
	So $N\setminus A$ is a plane and all its $7$ points are holes.
	% TODO: evtl Bild
\end{proof}

Now we partially extend the analysis of Table~\ref{tbl:eier} to $\dim(\langle N\rangle/W) = 3$, characterizing the hole distribution to the intermediate lattice of $W \leq \langle N\rangle$ for \enquote{heavy} subspaces $W$.
While this information is only needed for the last case $n=7$, the proof works for any value of $n$.

\begin{lemma}
	\label{lem:fanoeier}
	Let $W \leq \langle N\rangle$ of codimension $3$ and $h(W) \geq 6$.
	Denoting the set of the seven intermediate spaces of codimension $1$ by $\mathcal{X}$ and of the seven intermediate spaces of codimension $2$ by $\mathcal{Y}$, one of the following two cases arises:
	\begin{enumerate}[(i)]
		\item\label{lem:fanoeier:w8} $h(W) = 8$, $h(X) = 9$ for all $X\in\mathcal{X}$ and $h(Y) = 11$ for all $Y\in\mathcal{Y}$.
		\item\label{lem:fanoeier:w7} $h(W) = 7$. There is a single $Y\in\mathcal{Y}$ of multiplicity $7$, and the six remaining subspaces in $\mathcal{Y}$ are of multiplicity $11$.
		The three $X\in\mathcal{X}$ contained in $Y$ are of multiplicity $7$, the other $4$ subspaces in $\mathcal{X}$ are of multiplicity $9$.
	\end{enumerate}
	In particular, $h(W) = 6$ is not possible.
\end{lemma}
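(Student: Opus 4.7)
The interval $[W,\langle N\rangle]$ in the subspace lattice is isomorphic to the lattice of subspaces of $\F_2^3$, that is, to the incidence geometry of a Fano plane $\PG(2,2)$: the seven elements of $\mathcal{X}$ correspond to points and the seven elements of $\mathcal{Y}$ to lines (with the identification consistent with the asserted multiplicities $h(X)$ odd and $h(Y)\in\{3,7,11\}$). In particular, each $Y\in\mathcal{Y}$ contains exactly three $X\in\mathcal{X}$, and each $X\in\mathcal{X}$ is contained in exactly three $Y\in\mathcal{Y}$.

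My plan is to combine two layers of input. The first is a pair of double-count identities, obtained from $h(\langle N\rangle)=15$ together with the fact that each point outside $W$ lies in exactly one $X\in\mathcal{X}$ and in exactly three $Y\in\mathcal{Y}$:
\[
\sum_{X\in\mathcal{X}} h(X) = 6h(W)+15, \qquad \sum_{Y\in\mathcal{Y}} h(Y) = 4h(W)+45.
\]
The second is Lemma~\ref{lem:sieve} applied to every chain $W\leq Y$ with $\dim(Y/W)=2$, which yields the triple equation $h(X_1)+h(X_2)+h(X_3) = 2h(W)+h(Y)$ for the three $X_i\subseteq Y$ in $\mathcal{X}$. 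Combined with $h(Y)\in\{3,7,11\}$ from the hole spectrum and the constraint $h(X)\in\{1,3,5,7,9\}$ coming from Lemma~\ref{lem:N_HE_odd} and Table~\ref{tbl:eier}, these furnish all the arithmetic ingredients.

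I would then dispose of the straightforward cases. First, $\sum h(Y)\leq 77$ rules out $h(W)\geq 9$. For $h(W)=8$, the identity $\sum h(Y)=77$ forces $h(Y)=11$ for every $Y$, and each triple then sums to $27$ with entries at most $9$, forcing $(9,9,9)$; this is conclusion~(i). For $h(W)=7$, the triple equation rules out $h(Y)=3$ (since $2\cdot 7+3=17>3\cdot 3$), so the multiplicity distribution on $\mathcal{Y}$ is forced to be $(7,11^6)$; the triples are then $(7,7,7)$ under the unique mult-$7$ subspace $Y_0$ and $(9,9,7)$ under each mult-$11$ subspace, and an easy incidence count identifies the three mult-$7$ elements of $\mathcal{X}$ with the three $X\subseteq Y_0$, yielding conclusion~(ii).

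The delicate part will be to rule out $h(W)=6$, because the arithmetic alone is consistent: it forces the $\mathcal{Y}$-distribution $(7^2,11^5)$ with both $(9,9,5)$ and $(9,7,7)$ still admissible as triples under mult-$11$ subspaces. The refutation has to be structural. Let $Y_a,Y_b$ be the two mult-$7$ subspaces and set $X_0 := Y_a \cap Y_b \in\mathcal{X}$; then the three $Y$'s containing $X_0$ carry multiplicities $\{7,7,11\}$, so Table~\ref{tbl:eier} pins down $h(X_0)=5$. The triple equation in $Y_a$ now forces the other two $X$'s in $Y_a$, say $X_1$ and $X_2$, to have multiplicity $7$. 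The decisive observation is that $h(X_i)=7=h(Y_a)$ with $X_i\subseteq Y_a$ implies $X_i$ already contains all seven holes of $Y_a$; hence these seven holes lie in $X_1\cap X_2$, which equals $W$ since $X_1,X_2$ are distinct codimension-$1$ subspaces of $Y_a$ both containing $W$. This gives $h(W)\geq 7$, contradicting $h(W)=6$.
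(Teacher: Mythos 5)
Your proof is correct, and it reaches the conclusion by a genuinely different route than the paper. The paper's proof opens with the monotonicity observation $h(X)\geq h(W)\geq 6$ for all $X\in\mathcal{X}$, which via Table~\ref{tbl:eier} immediately confines $h(X)$ to $\{7,9\}$ and $h(Y)$ to $\{7,11\}$; the impossibility of two multiplicity-$7$ elements of $\mathcal{Y}$ (and with it, after applying Lemma~\ref{lem:sieve} to the surviving configurations, the impossibility of $h(W)=6$) is then a one-line table lookup, since their intersection would be an $X$ of multiplicity at most $5<6$. You instead work from the global double-counting identities $\sum_X h(X)=6h(W)+15$ and $\sum_Y h(Y)=4h(W)+45$, which do not appear in the paper; these give a clean a priori bound $h(W)\leq 8$ and pin down the $\mathcal{Y}$-distribution for each candidate value of $h(W)$, after which the local triple equation from Lemma~\ref{lem:sieve} determines the $X$-multiplicities (your incidence count finishing case (ii) is fine). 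The price you pay is at $h(W)=6$, where you need the separate structural argument that the two multiplicity-$7$ hyperplanes $Y_a,Y_b$ force $h(Y_a\cap Y_b)=5$, hence two multiplicity-$7$ elements $X_1,X_2$ inside $Y_a$ whose common holes fill $X_1\cap X_2=W$. That argument is correct, but avoidable: had you also invoked $h(X)\geq h(W)$ (the elements of $\mathcal{X}$ contain $W$ as well), the triple under a multiplicity-$7$ element of $\mathcal{Y}$ would be forced to $(7,7,7)$, summing to $21\neq 2\cdot 6+7$, killing $h(W)=6$ on the spot. Both arguments are sound; yours makes the quantitative bookkeeping more systematic, while the paper's use of monotonicity from the start is shorter.
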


\begin{proof}
	For all $X\in\mathcal{X}$, $W \leq X$, so $h(X) \geq h(W) \geq 6$.
	So only the first two lines in Table~\ref{tbl:eier} are possible, and in particular $h(X)\in\{7,9\}$ for all $X\in\mathcal{X}$ and $h(Y)\in\{7,11\}$ for all $Y\in\mathcal{Y}$.

	The intermediate lattice of $W \leq \langle N\rangle$ carries the structure of a Fano plane.
	If there are two distinct $Y_1, Y_2\in\mathcal{Y}$ of multiplicity $7$, Table~\ref{tbl:eier} shows that $Y_1 \cap Y_2 \in \mathcal{X}$ is of multiplicity at most $5$, which is a contradiction.
	So the number of $Y\in\mathcal{Y}$ of multiplicity $7$ is either $0$ or $1$.
	After several applications of Lemma~\ref{lem:sieve} and Table~\ref{tbl:eier}, these two possibilities are completed to the stated cases.
\end{proof}

\begin{lemma}
	\label{lem:noskewlines}
	Let $\mathcal{L}$ be a set of lines in some projective geometry such that no pair of lines in $\mathcal{L}$ is skew.
	Then at least one of the following statements is true:
	\begin{enumerate}[(i)]
		\item All the lines in $\mathcal{L}$ pass through a common point.
		\item The lines in $\mathcal{L}$ are contained in a common plane.
	\end{enumerate}
\end{lemma}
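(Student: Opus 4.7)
The plan is to split on whether condition (i) already holds; if it does not, I will exhibit a single plane containing every line of $\mathcal{L}$. The cases $\lvert\mathcal{L}\rvert\leq 2$ are trivial (a single line passes through each of its own points and lies in many planes; two meeting lines share a point and span a plane), so I may assume $\lvert\mathcal{L}\rvert\geq 3$ and that (i) fails.

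Pick any two distinct $L_1,L_2\in\mathcal{L}$, meeting in a unique point $P:=L_1\cap L_2$. Since (i) fails, there exists $L_3\in\mathcal{L}$ not through $P$. As $L_3$ meets both $L_1$ and $L_2$, the intersection points $P_1:=L_3\cap L_1$ and $P_2:=L_3\cap L_2$ satisfy $P_1\neq P$, $P_2\neq P$, and $P_1\neq P_2$ (else $P_1=P_2\in L_1\cap L_2=\{P\}$). Set $\pi:=L_1+L_2$, which is a plane because $L_1$ and $L_2$ meet. Both $P_1$ and $P_2$ lie in $\pi$, so $L_3=P_1+P_2\subseteq\pi$.

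It remains to show every further $L\in\mathcal{L}$ is contained in $\pi$. Since $L$ meets each of $L_1,L_2,L_3\subseteq\pi$, we have $L\cap\pi\neq\emptyset$. A Grassmann-style dimension count in the ambient vector space gives $\dim(L\cap\pi)\leq 1$ whenever $L\not\subseteq\pi$, so in that case $L\cap\pi$ is a single projective point $Q$ which must lie on all three of $L_1,L_2,L_3$. But $L_1\cap L_2=\{P\}$ and $P\notin L_3$, so no such $Q$ exists. Hence $L\subseteq\pi$, establishing (ii).

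The argument is elementary and I anticipate no serious obstacle. The crucial observation is that a failure of (i) forces three of the lines to have three pairwise distinct intersection points; any additional line outside their common plane would then have to meet all three at one common point, which cannot exist.
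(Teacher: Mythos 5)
Your proof is correct and follows essentially the same route as the paper: isolate three lines forming a triangle (which exists once (i) fails), observe they span a plane, and show any further line meeting all three must lie in that plane. The only cosmetic difference is that you phrase the final step as a contradiction (a line not in the plane would meet it in a single point lying on all three triangle sides), whereas the paper argues directly that the line has two distinct points in the plane.
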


\begin{proof}
	Assume that there is no common point of the lines in $\mathcal{L}$.
	Then there exist three lines $L_1,L_2,L_3\in\mathcal{L}$ forming a triangle.
	Let $E$ be the plane spanned by $L_1$, $L_2$ and $L_3$.
	Let $L'\in\mathcal{L}\setminus\{L_1,L_2,L_3\}$.
	Then $P_i := L' \cap L_i\in E$ for all $i\in\{1,2,3\}$, and as the three lines $L_1, L_2$ and $L_3$ do not pass through a common point, $\#\{P_1,P_2,P_3\} \geq 2$.
	This implies $L' \leq E$.
\end{proof}

The main step to the classification is the following lemma.

\begin{lemma}
	\label{lem:size16:n7:nosqew}
	For $n=7$, the hole set $N$ contains exactly $7$ lines.
	No pair of these lines is skew.
\end{lemma}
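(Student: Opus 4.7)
The plan is to prove the two assertions of Lemma~\ref{lem:size16:n7:nosqew} by completely separate arguments, each relying on a double-counting identity.

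For the exact count of lines in $N$, I would apply a standard double count to $T := \sum_{H \in \qbinom{V}{6}{2}} \binom{h(H)}{3}$. Evaluating $T$ directly from the hole spectrum $(3^7\, 7^{99}\, 11^{21})$ given for $n=7$ is a routine calculation. On the other hand, each unordered triple of distinct holes is either collinear (in which case it is precisely the point set of a line contained in $N$) or spans a plane; a collinear triple lies in $\qbinom{5}{4}{2}=31$ hyperplanes of $V$ and a non-collinear one in $\qbinom{4}{3}{2}=15$. Letting $\ell$ denote the number of lines contained in $N$, this gives the expression $T = 31\ell + 15(\binom{15}{3}-\ell) = 6825 + 16\ell$. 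Comparing with the spectrum evaluation then yields $\ell=7$.

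For the no-skew-pair claim I would argue by contradiction. Assume two of the seven lines $L_1,L_2 \subseteq N$ are skew; then their join $S := L_1 + L_2$ is a $4$-dimensional subspace of $V=\langle N\rangle$ (hence of codimension~$3$) containing the $3+3=6$ distinct holes of $L_1 \cup L_2$, so that $h(S) \geq 6$. Lemma~\ref{lem:fanoeier}, applied to $W = S$, forces $h(S) \in \{7,8\}$ and prescribes the multiplicities of the seven intermediate $5$-subspaces $Y$ with $S \leq Y \leq V$: either all seven $Y$ satisfy $h(Y)=11$ (giving $\sum_Y h(Y) = 77$), or one of them satisfies $h(Y) = 7$ and the other six $h(Y) = 11$ (giving $\sum_Y h(Y) = 73$).

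The key step is to confront this with an elementary slicing identity. Since each point $P \notin S$ lies in the unique intermediate $5$-flat $Y = S + \langle P\rangle$, the seven sets $Y \setminus S$ partition $V \setminus S$, yielding
\[
    \sum_{Y} h(Y) \;=\; 7\, h(S) + (h(V) - h(S)) \;=\; 15 + 6\, h(S).
\]
For $h(S) = 7$ and $h(S) = 8$ this evaluates to $57$ and $63$, respectively, contradicting the values $73$ and $77$ demanded by Lemma~\ref{lem:fanoeier}. Hence the assumption of a skew pair cannot hold. The main obstacle is arriving at this contradiction cleanly; the trick is to notice that Lemma~\ref{lem:fanoeier} pins down $\sum_Y h(Y)$ sharply, while the slicing formula expresses the very same sum in terms of $h(V)$ and $h(S)$ alone, so that the numerical mismatch is immediate.
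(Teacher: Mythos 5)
Your count of the lines contained in $N$ is correct and is essentially the paper's own argument: the paper counts the pairs $(H,\{P_1,P_2,P_3\})$ of a hyperplane and a triple of holes inside it, obtaining $6937$ from the hole spectrum and $16\ell+6825$ from the collinear/non-collinear dichotomy, exactly as you do. So the first assertion is fine.

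The argument for the second assertion, however, has a genuine gap, located in your application of Lemma~\ref{lem:fanoeier}. The multiplicities $11$ (and the single $7$) in that lemma belong to the seven \emph{hyperplanes} through $W$, not to the seven $5$-dimensional intermediate spaces: by Table~\ref{tbl:eier} a codimension-$2$ subspace of $\langle N\rangle$ carries at most $9$ holes, and the phrase ``the three $X\in\mathcal{X}$ contained in $Y$'' in case (ii) only makes sense when $Y$ is the larger space; the codimension labels in the lemma's first sentence are evidently transposed, and the paper's own later use of the lemma (``$S$ is contained in a hyperplane $H$ of multiplicity $7$'') confirms this reading. With the correct assignment, the seven $5$-spaces through $S$ have multiplicities $9,9,9,9,9,9,9$ when $h(S)=8$ and $7,7,7,9,9,9,9$ when $h(S)=7$, so their sum is $63$ resp.\ $57$ --- precisely the values your slicing identity $\sum_Y h(Y)=6\,h(S)+15$ produces. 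Doing the analogous computation for the hyperplanes (each point outside $S$ lies on exactly $3$ of the $7$ hyperplanes through $S$) gives $\sum_H h(H)=4\,h(S)+45$, i.e.\ $77$ resp.\ $73$, again in perfect agreement with the lemma. Hence there is no numerical contradiction to be extracted: both configurations of Lemma~\ref{lem:fanoeier} are arithmetically self-consistent, and no pure hole count over the subspaces above $S$ can exclude a skew pair. At this point the paper has to argue geometrically: in the case $h(S)=7$ it produces a $4$-flat of even multiplicity $4$, contradicting Lemma~\ref{lem:N_HE_odd}, and in the case $h(S)=8$ it brings in a third line of $\mathcal{L}$ and reaches a contradiction via the dimension formula. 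Some replacement of this kind is needed before your proof of the second assertion can stand.
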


\begin{proof}
	Let $\ell$ be the number of lines contained in $N$.
	We count the set $X$ of pairs $(H,\{P_1,P_2,P_3\}) \in\qbinom{V}{6}{2} \times \binom{N}{3}$ with $\{P_1,P_2,P_3\} \subseteq H$ in two ways.
	By the hole spectrum $(3^7 7^{99} 11^{21})$,
	\[
		\#X = 7\cdot\binom{3}{3} + 99\cdot\binom{7}{3} + 21\cdot\binom{11}{3} = 6937\text{.}
	\]
	On the other hand, each of the $\ell$ collinear triples of holes generates a $2$-subspace, and each of the $\binom{15}{3} - \ell$ non-collinear triples of holes generates a $3$-subspace, showing that
	\[
		\#X = \ell\cdot \qbinom{7-2}{6-2}{2} + \left(\binom{15}{3} - \ell\right) \qbinom{7-3}{6-3}{2} = 16\ell + 6825\text{.}
	\]
	Thus $16\ell + 6825 = 6937$ and hence $N$ contains exactly $\ell = 7$ lines.
	Let $\mathcal{L}$ be the set of these lines.

	Assume that $L_1,L_2\in\mathcal{L}$ are skew.
	Then $S := \langle L_1, L_2\rangle$ is a solid.
	By Lemma~\ref{lem:fanoeier}, there are two possible cases.

	\paragraph{Case 1}
	$h(S) = 7$ and $S$ is contained in a hyperplane $H$ of multiplicity $7$.
	Let $P$ be the hole of $S$ which is not covered by the lines $L_1$ and $L_2$.
	Let $E$ be a plane of $S$ passing through $L_1$ and not containing $P$.
	As $E$ intersects $L_2$ in a point, $h(E) = 3 + 1 = 4$.
	Since there is no hole in $H\setminus S$, any $4$-flat $F \leq H$ with $F \cap S = E$ is of multiplicity $h(F) = h(E) = 4$.
	This is a contradiction to Lemma~\ref{lem:N_HE_odd}.

	\paragraph{Case 2}
	$h(S) = 8$.
	So apart from the points on $L_1$ and $L_2$, $S$ contains two further holes $P_1$ and $P_2$.
	Each of the points $P_i$ ($i\in\{1,2\}$) is contained in a single line of $N \cap S$, which is given by the line passing through $P_i$ and the intersection point of $L_2$ and the plane spanned by $P_i$ and $L_1$.
	So in total, $S$ contains $4$ lines of $\mathcal{L}$.

	By $\#\mathcal{L} = 7$ there exists a line $L_3\in\mathcal{L}$ not contained in $S$, implying that $L_3$ contains at least $2$ holes not contained in $S$.
	Let $H = \langle L_1, L_2, L_3\rangle$.
	Then $h(H) \geq h(S) + 2 = 10$.
	By Table~\ref{tbl:eier}, any $4$-flat contains at most $9$ holes, so $\dim(H) = 6$, $h(H) = 11$ and by the dimension formula
	\[
		\dim(S\cap L_3) = \dim(S) + \dim(L_3) - \dim(S + L_3) = 4 + 2 - \dim(H) = 0\text{.}
	\]
	Therefore, any two of the three lines $L_1$, $L_2$ and $L_3$ are skew.

	By the already excluded Case~1, the solids $S' = \langle L_1, L_3\rangle$ and $S'' = \langle L_2, L_3\rangle$ are of multiplicity $h(S') = h(S'') = 8$.
	So each of the solids $S$, $S'$ and $S''$ contains two extra holes which are not contained in $L_1$, $L_2$ or $L_3$.
	As the $11$ holes in $H$ are already given by $P_1$, $P_2$ and the points on $L_1$, $L_2$ and $L_3$, these two extra holes must be $P_1$, $P_2$ for all three solids $S$, $S'$ and $S''$.
	Now $S \cap S'$ contains at least the five holes given by $P_1$, $P_2$ and the holes on $L_1$.
	Therefore, $\dim(S\cap S') \geq 3$.
	However, the dimension formula yields
	\[
		\dim(S\cap S') = \dim(S) + \dim(S') - \dim(S+S') = 4 + 4 - \dim(H) = 2\text{.}
	\]
	Contradiction.
\end{proof}

\begin{proof}[Proof of Theorem~\ref{thm:size16}, hole structure for $n=7$]
	By Lemma~\ref{lem:size16:n7:nosqew} and Lemma~\ref{lem:noskewlines}, we are in one of the following cases:
	\begin{enumerate}[(i)]
		\item
			The lines in $\mathcal{L}$ pass through a common point $P$.
			Thus, $\mathcal{L}$ covers all the $1 + 7\cdot 2 = 15$ holes, showing that $N = \bigcup \mathcal{L}$.
			In particular, $\dim\langle\bigcup\mathcal{L}\rangle = n = 7$.
			So for any set $\mathcal{L}'$ of $6$ lines in $\mathcal{L}$, $\dim\langle\bigcup\mathcal{L}'\rangle \in\{6,7\}$.
			Indeed, $\dim\langle\bigcup\mathcal{L}'\rangle = 7$, since otherwise $\langle \bigcup\mathcal{L}'\rangle$ is a hyperplane containing at least $13$ holes which contradicts the hole spectrum.
			So $\{L/P \mid L \in \mathcal{L}\}$ is a projective basis of the factor geometry $\PG(V/P)$.
		\item
			There is a plane $E\in\qbinom{V}{3}{2}$ such that $\mathcal{L} = \qbinom{E}{2}{2}$.
			By $E\subseteq N$, $\mathcal{S} \cup \{E\}$ is a partial plane spread of maximum size $17$.
			By Lemma~\ref{lem:ps17_holes}, its hole set $N\setminus E$ is an affine solid, showing that $N$ is the disjoint union of a plane and an affine solid.
	\end{enumerate}
\end{proof}

\subsection{Extendible partial plane spreads of size $16$}
For the classification of the extendible partial plane spreads of size
$16$, we make use of the classification of the maximum partial plane
spreads of Section~\ref{sect:ps17}.

A partial plane spread $\mathcal{S}$ of size $16$ is extendible to
size $17$ if and only if its hole set contains a plane $E$.  In this
case, $N = E \cup A$ where $A$ is an affine solid,
$\hat{\mathcal{S}} = \mathcal{S} \cup\{E\}$ is a maximum partial plane
spread and its hole set is $A$.  By the dimension formula,
$\dim(E \cap \langle A\rangle) = 7-n$, showing that $7-n$ must appear
in the type of $\hat{\mathcal{S}}$.

The remaining possibilities are:
	\begin{enumerate}[(i)]
		\item
		If $n=4$, the hole set $N$ contains $15$ planes.
		Extending $\mathcal{S}$ by any of these planes $E$ leads to a maximum partial plane spread $\hat{\mathcal{S}}$ of type $(3^1)$, and $E$ is the unique block contained in the hole space of $\hat{\mathcal{S}}$.
		\item 
		If $n=5$, the hole set $N$ contains $3$ planes.
		Extending $\mathcal{S}$ by any of these planes $E$ leads to a maximum partial plane spread $\hat{\mathcal{S}}$ of type $(2^1 1^4)$, and $E$ is the unique block intersecting the hole space of $\hat{\mathcal{S}}$ in a line.
		\item
		For $n=6$, the hole set $N$ contains a single plane.
		The resulting maximum partial plane spread $\hat{\mathcal{S}}$ is of type $(2^1 1^4)$ or $(1^7)$.
		\item 
		For $n=7$, the hole set $N$ contains a single plane.
		For the resulting maximum partial plane spread $\hat{\mathcal{S}}$, all three types $(3^1)$, $(2^1 1^4)$ and $(1^7)$ are possible.
\end{enumerate}

\begin{remark}
In the cases $n\in\{4,5\}$, the block $E$ of a maximum partial plane spread $\hat{\mathcal{S}}$ is called \emph{moving} as it can be exchanged for any of the $15$ or $3$ other planes preserving the property of $\hat{\mathcal{S}}$ being a maximum partial spread.
\end{remark}

Now for the generation of partial plane spreads of size $16$ with $n = 4$, we go through the $150$ maximum partial plane spreads $\hat{\mathcal{S}}$ of type $(3^1)$ and remove the block in the hole space.
It is possible that non-isomorphic maximum partial plane spreads yield isomorphic reductions.
More precisely, as the hole set of the reduction contains $15$ planes, an isomorphism type might be generated up to $15$ times in this way.
The actual distribution of numbers of reductions falling together is
\[
	(1^1 2^9 3^8 4^6 5^6 6^2 7^2 9^3)\text{.}
\]
So in total, there are $37$ isomorphism types of partial plane spreads with $n=4$.
In Theorem~\ref{thm:mrd} and Section~\ref{sec:ssc:3_16__4_1}, we will see that these partial plane spreads correspond to vector space partitions of $\PG(6,2)$ of type $(3^{16} 4^1)$, to binary $3\times 4$ MRD codes of minimum rank distance $3$ (which are given explicitly in Theorem~\ref{thm:mrd}) and to $(7,17,6)_2$ subspace codes of dimension distribution $(3^{16} 4^1)$.

Similarly, for the generation of partial plane spreads of size $16$ with $n = 5$, we go through the $180$ maximum partial plane spreads $\hat{\mathcal{S}}$ of type $(2^1 1^4)$ and remove the block intersecting the hole space in dimension $2$.
Since the hole set of the reduction contains $3$ planes, an isomorphism type may be generated up to three times.
The $180$ starting spreads produced the distribution $(1^{10} 2^7 3^{52})$ of reductions falling together.

For $n\in\{6,7\}$, we proceed in a similar manner, starting with the maximum partial plane spreads of types $(2^1 1^4)$ and $(1^7)$, or of all three types, respectively.
The resulting numbers of isomorphism types of extendible partial plane spreads of size $16$ are summarized in Table~\ref{tbl:extendible16}.

\begin{table}
	\caption{Types of extendible partial plane spreads in $\PG(6,2)$ of size $16$}
	\label{tbl:extendible16}
	\centering
	\vspace{2mm}
	$\begin{array}{c|ccc|c}
		 & \hat{\mathcal{S}}\text{ of type } (3^1) & \hat{\mathcal{S}}\text{ of type } (2^1 1^4) & \hat{\mathcal{S}}\text{ of type } (1^7) & \Sigma \\
		\hline
		n=4 & 37 & 0 & 0 & 37 \\
		n=5 & 0 & 69 & 0 & 69 \\
		n=6 & 0 & 604 & 2689 & 3293 \\
		n=7 & 1324 & 1890 & 3844 & 7058 \\
		\hline
		\Sigma & 1361 & 2563 & 6533 & 10457
	\end{array}$
\end{table}

% n=6:
%    {* 0^^10, 1^^6 *}^^2689,
%    {* 0^^12, 1^^3, 2 *}^^604

\subsection{Complete partial plane spreads of size $16$}
For the remaining classification of the complete partial plane spreads, we fix the hole set $N$ as determined above and let $\mathcal{S}$ be a partial plane spread of size $16$ having hole set $N$.
Furthermore, we fix a hyperplane $H$ containing $3$ holes and compute the stabilizer $G$ of $N \cup \{H\}$ in $\GL(7,2)$ as a group of order $46080$.

By Lemma~\ref{lem:ps_hyperplane_holes}, $H$ contains $3$ blocks of $\mathcal{S}$.
Under the action of $G$, we find $3$ possibilities to add a single plane in $H$, $18$ possibilities to add two disjoint planes and $275$ possibilities to add three pairwise disjoint planes in $H$.
For each of these $275$ configurations, we enumerate all possibilities for the extension to a partial plane spread of size $16$ attaining the hole set $N$.
Stating this problem as an exact cover problem, the running time per starting configuration is only a few seconds.
The number of extensions per starting configuration ranges between $88$ and $2704$.
In total, we get $66490$ extensions.
Filtering out isomorphic copies, we end up with $3988$ isomorphism types.

\begin{example}
	We give the most symmetric complete partial plane spreads $\mathcal{S}_1$ and $\mathcal{S}_2$ as lists of generator matrices in Tables~\ref{table:s16complete_1} and~\ref{table:s16complete_2}, both having an automorphism group of order $24$ isomorphic to the symmetric group $S_4$.
	In both cases, the holes are given by the seven lines passing through a unit vector and the all-one vector.
	The orbit structure of $\mathcal{S}_1$ is $\{1\}\{2\}\{3,4,5,6,7,8\}\{9,10,11,12,13,14,15,16\}$, where $1$ denotes the first matrix in the list, $2$ denotes the second matrix in the list and so on.
	The orbit structure of $\mathcal{S}_2$ is $\{1\}\{2,3,4\}\{5,6,7,8\}\{9,10,11,12,13,14,15,16\}$.

\begin{table}
\caption{The complete partial plane spread $\mathcal{S}_1$ of size $16$}
\label{table:s16complete_1}
\vspace{2mm}
\centering$\setlength{\arraycolsep}{2pt}
\begin{array}{cccc}
    \left(\begin{smallmatrix}
    1 & 0 & 0 & 0 & 0 & 0 & 1 \\
    0 & 1 & 0 & 0 & 0 & 0 & 1 \\
    0 & 0 & 1 & 0 & 0 & 0 & 1
    \end{smallmatrix}\right)\text{,} &
    \left(\begin{smallmatrix}
    1 & 0 & 0 & 0 & 0 & 1 & 1 \\
    0 & 0 & 1 & 0 & 1 & 1 & 1 \\
    0 & 0 & 0 & 1 & 1 & 1 & 0
    \end{smallmatrix}\right)\text{,} &
    \left(\begin{smallmatrix}
    0 & 1 & 0 & 0 & 1 & 0 & 1 \\
    0 & 0 & 1 & 0 & 0 & 1 & 0 \\
    0 & 0 & 0 & 1 & 0 & 1 & 0
    \end{smallmatrix}\right)\text{,} &
    \left(\begin{smallmatrix}
    1 & 0 & 1 & 0 & 1 & 1 & 1 \\
    0 & 1 & 0 & 0 & 0 & 1 & 0 \\
    0 & 0 & 0 & 1 & 0 & 0 & 1
    \end{smallmatrix}\right)\text{,} \\[+3mm]
    \left(\begin{smallmatrix}
    1 & 0 & 0 & 0 & 0 & 1 & 0 \\
    0 & 1 & 0 & 0 & 1 & 0 & 0 \\
    0 & 0 & 1 & 1 & 1 & 1 & 1
    \end{smallmatrix}\right)\text{,} &
    \left(\begin{smallmatrix}
    1 & 1 & 0 & 1 & 0 & 0 & 0 \\
    0 & 0 & 0 & 0 & 1 & 0 & 1 \\
    0 & 0 & 0 & 0 & 0 & 1 & 1
    \end{smallmatrix}\right)\text{,} &
    \left(\begin{smallmatrix}
    1 & 0 & 0 & 0 & 1 & 0 & 0 \\
    0 & 1 & 1 & 0 & 0 & 1 & 0 \\
    0 & 0 & 0 & 1 & 1 & 0 & 0
    \end{smallmatrix}\right)\text{,} &
    \left(\begin{smallmatrix}
    1 & 0 & 0 & 1 & 1 & 1 & 1 \\
    0 & 1 & 0 & 1 & 0 & 0 & 0 \\
    0 & 0 & 1 & 0 & 1 & 0 & 0
    \end{smallmatrix}\right)\text{,} \\[+3mm]
    \left(\begin{smallmatrix}
    1 & 0 & 0 & 1 & 1 & 0 & 0 \\
    0 & 1 & 0 & 1 & 1 & 0 & 1 \\
    0 & 0 & 1 & 0 & 1 & 0 & 1
    \end{smallmatrix}\right)\text{,} &
    \left(\begin{smallmatrix}
    1 & 0 & 0 & 0 & 1 & 1 & 1 \\
    0 & 0 & 1 & 0 & 1 & 1 & 0 \\
    0 & 0 & 0 & 1 & 1 & 0 & 1
    \end{smallmatrix}\right)\text{,} &
    \left(\begin{smallmatrix}
    1 & 0 & 0 & 0 & 1 & 0 & 1 \\
    0 & 1 & 0 & 0 & 1 & 1 & 1 \\
    0 & 0 & 0 & 1 & 0 & 1 & 1
    \end{smallmatrix}\right)\text{,} &
    \left(\begin{smallmatrix}
    0 & 1 & 0 & 0 & 1 & 1 & 0 \\
    0 & 0 & 1 & 0 & 0 & 1 & 1 \\
    0 & 0 & 0 & 1 & 1 & 1 & 1
    \end{smallmatrix}\right)\text{,} \\[+3mm]
    \left(\begin{smallmatrix}
    1 & 0 & 0 & 1 & 0 & 0 & 1 \\
    0 & 1 & 0 & 0 & 0 & 1 & 1 \\
    0 & 0 & 1 & 1 & 0 & 1 & 0
    \end{smallmatrix}\right)\text{,} &
    \left(\begin{smallmatrix}
    1 & 0 & 1 & 0 & 0 & 1 & 0 \\
    0 & 1 & 1 & 0 & 0 & 0 & 1 \\
    0 & 0 & 0 & 0 & 1 & 1 & 1
    \end{smallmatrix}\right)\text{,} &
    \left(\begin{smallmatrix}
    1 & 0 & 0 & 1 & 0 & 1 & 1 \\
    0 & 1 & 0 & 1 & 1 & 1 & 0 \\
    0 & 0 & 1 & 1 & 1 & 0 & 1
    \end{smallmatrix}\right)\text{,} &
    \left(\begin{smallmatrix}
    1 & 0 & 0 & 0 & 1 & 1 & 0 \\
    0 & 1 & 0 & 1 & 0 & 1 & 0 \\
    0 & 0 & 1 & 1 & 1 & 1 & 0
    \end{smallmatrix}\right)\phantom{,}
\end{array}$
\end{table}

\begin{table}
\caption{The complete partial plane spread $\mathcal{S}_2$ of size $16$}
\label{table:s16complete_2}
\vspace{2mm}
\centering$\setlength{\arraycolsep}{2pt}
\begin{array}{cccc}
    \left(\begin{smallmatrix}
    1 & 0 & 0 & 0 & 0 & 1 & 1 \\
    0 & 1 & 0 & 1 & 0 & 1 & 1 \\
    0 & 0 & 1 & 1 & 0 & 0 & 1
    \end{smallmatrix}\right)\text{,} &
    \left(\begin{smallmatrix}
    0 & 1 & 0 & 0 & 1 & 0 & 1 \\
    0 & 0 & 1 & 0 & 0 & 1 & 0 \\
    0 & 0 & 0 & 1 & 0 & 1 & 0
    \end{smallmatrix}\right)\text{,} &
    \left(\begin{smallmatrix}
    1 & 0 & 0 & 0 & 0 & 0 & 1 \\
    0 & 0 & 0 & 1 & 0 & 0 & 1 \\
    0 & 0 & 0 & 0 & 1 & 0 & 1
    \end{smallmatrix}\right)\text{,} &
    \left(\begin{smallmatrix}
    1 & 1 & 0 & 1 & 0 & 1 & 1 \\
    0 & 0 & 1 & 0 & 0 & 0 & 1 \\
    0 & 0 & 0 & 0 & 1 & 1 & 0
    \end{smallmatrix}\right)\text{,} \\[+3mm]
    \left(\begin{smallmatrix}
    1 & 0 & 0 & 1 & 1 & 1 & 1 \\
    0 & 1 & 0 & 1 & 0 & 0 & 0 \\
    0 & 0 & 1 & 0 & 1 & 0 & 0
    \end{smallmatrix}\right)\text{,} &
    \left(\begin{smallmatrix}
    1 & 0 & 0 & 0 & 0 & 1 & 0 \\
    0 & 1 & 0 & 0 & 1 & 0 & 0 \\
    0 & 0 & 1 & 1 & 1 & 1 & 1
    \end{smallmatrix}\right)\text{,} &
    \left(\begin{smallmatrix}
    1 & 0 & 1 & 0 & 0 & 0 & 0 \\
    0 & 1 & 1 & 0 & 0 & 0 & 0 \\
    0 & 0 & 0 & 1 & 1 & 1 & 0
    \end{smallmatrix}\right)\text{,} &
    \left(\begin{smallmatrix}
    1 & 0 & 1 & 0 & 1 & 0 & 0 \\
    0 & 1 & 0 & 0 & 0 & 0 & 1 \\
    0 & 0 & 0 & 0 & 0 & 1 & 1
    \end{smallmatrix}\right)\text{,} \\[+3mm]
    \left(\begin{smallmatrix}
    0 & 1 & 0 & 0 & 1 & 1 & 1 \\
    0 & 0 & 1 & 0 & 1 & 1 & 0 \\
    0 & 0 & 0 & 1 & 0 & 1 & 1
    \end{smallmatrix}\right)\text{,} &
    \left(\begin{smallmatrix}
    1 & 0 & 0 & 0 & 1 & 0 & 1 \\
    0 & 1 & 0 & 0 & 1 & 1 & 0 \\
    0 & 0 & 0 & 1 & 1 & 1 & 1
    \end{smallmatrix}\right)\text{,} &
    \left(\begin{smallmatrix}
    1 & 0 & 0 & 0 & 1 & 1 & 1 \\
    0 & 1 & 0 & 1 & 1 & 0 & 1 \\
    0 & 0 & 1 & 1 & 1 & 1 & 0
    \end{smallmatrix}\right)\text{,} &
    \left(\begin{smallmatrix}
    1 & 0 & 1 & 0 & 0 & 0 & 1 \\
    0 & 1 & 1 & 0 & 1 & 0 & 1 \\
    0 & 0 & 0 & 1 & 1 & 0 & 1
    \end{smallmatrix}\right)\text{,} \\[+3mm]
    \left(\begin{smallmatrix}
    1 & 0 & 0 & 1 & 0 & 0 & 1 \\
    0 & 0 & 1 & 1 & 0 & 1 & 1 \\
    0 & 0 & 0 & 0 & 1 & 1 & 1
    \end{smallmatrix}\right)\text{,} &
    \left(\begin{smallmatrix}
    1 & 0 & 0 & 1 & 0 & 1 & 1 \\
    0 & 1 & 0 & 1 & 0 & 1 & 0 \\
    0 & 0 & 1 & 0 & 0 & 1 & 1
    \end{smallmatrix}\right)\text{,} &
    \left(\begin{smallmatrix}
    1 & 0 & 0 & 0 & 1 & 1 & 0 \\
    0 & 1 & 0 & 0 & 0 & 1 & 1 \\
    0 & 0 & 1 & 0 & 1 & 0 & 1
    \end{smallmatrix}\right)\text{,} &
    \left(\begin{smallmatrix}
    1 & 0 & 0 & 1 & 1 & 0 & 0 \\
    0 & 1 & 0 & 1 & 1 & 1 & 0 \\
    0 & 0 & 1 & 1 & 0 & 1 & 0
    \end{smallmatrix}\right)\phantom{.}
\end{array}$
\end{table}
\end{example}

\section{MRD codes}
\label{sect:mrd}
In this section, we fix the following notation:
Let $m,n$ be positive integers.
We define $\Phi : \Inn(m,n,q) \to \PGammaL(m+n,q)$, mapping an inner automorphism $\phi : A \mapsto P\sigma(A)Q + R$ to
\[
	\Phi(\phi) : \langle\vek{x}\rangle \mapsto
	\left\langle \sigma(\vek{x}) \begin{pmatrix} P^{-1} & P^{-1} R \\ 0 & Q \end{pmatrix}\right\rangle\text{.}
\]
It is easily checked that $\Phi$ is an injective group homomorphism.
The image of $\Phi$ consists of all automorphisms of $\PG(m+n-1,q)$ fixing the span $S$ of the last $n$ unit vectors.

For all $\phi\in\Inn(m,n,q)$ and all $A\in\F_q^{m\times n}$, we have
\begin{align*}
	(\Phi(\phi) \circ \Lambda)(A)
	& =
	\Phi(\phi)(\langle(I_m \mid A)\rangle) \\
	& = 
	\left\langle \sigma(I_m \mid A) \begin{pmatrix}
	P^{-1} & P^{-1} R \\
	0 & Q
	\end{pmatrix}\right\rangle \\
	& = 
	\left\langle (I_m \mid \sigma(A)) \begin{pmatrix}
	P^{-1} & P^{-1} R \\
	0 & Q
	\end{pmatrix}\right\rangle \\
	& = \langle (P^{-1} \mid P^{-1} R + \sigma(A)Q)\rangle \\
	& = \langle (I_m \mid R + P\sigma(A)Q)\rangle \\
	& = (\Lambda\circ\phi)(A)\text{.}
\end{align*}
So $\Lambda \circ \phi = \Phi(\phi) \circ \Lambda$.

%\begin{lemma}
%	Let $\mathcal{C} \subseteq \qbinom{\F_q^v}{k}{q}$ such that there exists an $S\in\qbinom{\F_q^v}{v-k}{q}$ with $B \cap S = \{\vek{0}\}$ for all $B\in\mathcal{C}$.
%	Then there exists a rank code $\mathcal{D}\subseteq\F_q^{k\times v-k}$ such that $\mathcal{C}$ is linearly isomorphic to $\Lambda(\mathcal{D})$.
%\end{lemma}
%
%\begin{proof}
%	There is a linear automorphism $\phi$ of $\PG(v-1,k)$ mapping $S$ to the span $S'$ of the last $v-k$ unit vectors.
%	Let $B\in\mathcal{C}$.
%	As $\phi(B')$ has trivial intersection with $S'$, its generator matrix in row reduced echelon form has the structure $(I_m \mid A_B)$ with $A_B\in\F_q^{k\times v-k}$.
%	Setting $\mathcal{D} = \{A_B \mid B\in\phi(\mathcal{C})\}$, we have $\Lambda(\mathcal{D}) = \phi(\mathcal{C})$.
%\end{proof}

\begin{lemma}
	\begin{enumerate}[(a)]
		\item Let $\mathcal{C}$ be an $m\times n$ MRD code over $\F_q$.
			Then the inner automorphism group of $\mathcal{C}$ is given by $\Phi^{-1}(\Aut(\Lambda(\mathcal{C})))$.
		\item Let $\mathcal{C}$ and $\mathcal{C}'$ be two $m\times n$ MRD codes over $\F_q$.
			Then $\mathcal{C}$ and $\mathcal{C}'$ are isomorphic as rank metric codes under an inner isomorphism if and only if $\Lambda(\mathcal{C})$ and $\Lambda(\mathcal{C}')$ are isomorphic as subspace codes.
	\end{enumerate}
\end{lemma}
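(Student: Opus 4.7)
Both parts rest on three already-established facts: the intertwining relation $\Lambda\circ\phi = \Phi(\phi)\circ\Lambda$ computed in the display immediately preceding the lemma, the injectivity of the lifting map $\Lambda$, and the description of $\im(\Phi)$ as the subgroup of $\PGammaL(m+n,q)$ stabilizing the distinguished $n$-subspace $S=\langle\vek{e}_{m+1},\ldots,\vek{e}_{m+n}\rangle$. For part~(a) I would chase a chain of equivalences: a given $\phi\in\Inn(m,n,q)$ stabilizes $\mathcal{C}$ iff $\Lambda(\phi(\mathcal{C}))=\Lambda(\mathcal{C})$ (by injectivity of $\Lambda$) iff $\Phi(\phi)(\Lambda(\mathcal{C}))=\Lambda(\mathcal{C})$ (by the intertwining relation) iff $\Phi(\phi)\in\Aut(\Lambda(\mathcal{C}))$. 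Reading this as a set equality yields $\Aut(\mathcal{C})=\Phi^{-1}(\Aut(\Lambda(\mathcal{C})))$.

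The forward direction of part~(b) is an immediate variant of the same chain: if $\phi\in\Inn(m,n,q)$ satisfies $\phi(\mathcal{C})=\mathcal{C}'$, then $\Phi(\phi)(\Lambda(\mathcal{C}))=\Lambda(\phi(\mathcal{C}))=\Lambda(\mathcal{C}')$, so $\Phi(\phi)$ is a subspace code isomorphism. The converse is where I expect the main obstacle. Given a subspace code isomorphism $\psi\colon\Lambda(\mathcal{C})\to\Lambda(\mathcal{C}')$, the plan is to replace $\psi$ by one that stabilizes $S$, which by the description of $\im(\Phi)$ will then arise from an inner isomorphism of rank-metric codes. Set $S^{*}=\psi^{-1}(S)$; since $S$ meets no codeword of $\Lambda(\mathcal{C}')$, the subspace $S^{*}$ meets no codeword of $\Lambda(\mathcal{C})$. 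Applying Fact~\ref{fct:mrd_structure}(b) to $\Lambda(\mathcal{C})$ with distinguished subspace $S^{*}$ in place of $S$ produces a basis-change $\rho$ with $\rho(S^{*})=S$ and $\rho(\Lambda(\mathcal{C}))=\Lambda(\tilde{\mathcal{C}})$ for some $m\times n$ MRD code $\tilde{\mathcal{C}}$ of minimum distance $\delta$. Then $\psi\circ\rho^{-1}$ fixes $S$, so it equals $\Phi(\phi_1)$ for some $\phi_1\in\Inn(m,n,q)$ with $\phi_1(\tilde{\mathcal{C}})=\mathcal{C}'$.

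The remaining step, and the genuine obstacle, is to identify $\tilde{\mathcal{C}}$ with $\mathcal{C}$ up to an inner isomorphism. Here I would argue that $S$ is already intrinsically determined by $\Lambda(\mathcal{C})$, forcing every subspace code isomorphism to stabilize it; equivalently, one can take $S^{*}=S$, so that $\rho=\id$ and $\tilde{\mathcal{C}}=\mathcal{C}$. In the extremal case $\delta=m$ of primary interest to this paper, $\Lambda(\mathcal{C})$ is a partial $m$-spread in $\F_q^{m+n}$ whose hole space is precisely $S$, so this canonicity is transparent. For general $\delta$ one should establish the same uniqueness of $S$ as the (essentially) unique $n$-subspace disjoint from every codeword via a dimension or counting argument. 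With $\tilde{\mathcal{C}}=\mathcal{C}$ secured, $\phi_1$ is the desired inner isomorphism $\mathcal{C}\to\mathcal{C}'$ and the proof is complete.
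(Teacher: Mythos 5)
Your proposal follows essentially the same route as the paper: part (a) and the forward direction of (b) are the intertwining relation $\Lambda\circ\phi=\Phi(\phi)\circ\Lambda$ plus injectivity of $\Lambda$, and the converse of (b) rests on the canonicity of $S$, which forces every element of $\PGammaL(m+n,q)$ carrying one lifted MRD code to another to stabilize $S$ and hence to lie in $\im(\Phi)$; the paper simply asserts this uniqueness of $S$ as a consequence of Fact~\ref{fct:mrd_structure}, so your detour through Fact~\ref{fct:mrd_structure}(b) and the map $\rho$ is not needed there either. The loose end you flag for $\delta<m$ does close: for any nonzero $x\in\F_q^m$ the fibres of $A\mapsto xA$ on $\mathcal{C}$ are, after translation, rank-metric codes of minimum distance $\delta$ inside the space of matrices with $x$ in the left kernel (isometric to $\F_q^{(m-1)\times n}$), hence have size at most $q^{(m-\delta)n}$ by the Singleton bound, so the image of $A\mapsto xA$ is all of $\F_q^n$. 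Consequently every point of $\PG(\F_q^{m+n})$ outside $S$ lies on a codeword of $\Lambda(\mathcal{C})$, and $S$ is indeed the unique $n$-subspace disjoint from all codewords for every $\delta$, not just $\delta=m$.
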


\begin{proof}
	By Fact~\ref{fct:mrd_structure}, for any lifted MRD code $\hat{\mathcal{C}}$, the span of the last $n$ unit vectors is the unique subspace $S$ of $\F_q^{m+n}$ of dimension $n$ such that $B \cap S$ is trivial for all $B\in \hat{\mathcal{C}}$.
	Therefore, any $\psi\in\PGammaL(m+n,q)$ mapping a lifted MRD code to another one (or the same) has the form $\Phi(\phi)$ with $\phi\in\Inn(m,n,q)$.
	The proof is finished using the above statements about $\Phi$.
\end{proof}

By the above lemma, instead of classifying lifted $m\times n$ MRD codes with $m\leq n$ of minimum rank distance $d$ up to inner automorphisms, we can classify constant dimension codes of length $m+n$, dimension $m$, minimum rank distance $2d$ and size $q^{(m-d+1)n}$ such that each block is disjoint to $S$.

For $d=m$ these subspace codes are vector space partitions, so we get:
\begin{lemma}
	\label{lem:mrd_to_vsp}
	Let $m\leq n$.
	Via the lifting map $\Lambda$, the inner isomorphism classes of $m\times n$ MRD codes of minimum rank distance $m$ correspond to the isomorphism classes of vector space partitions of type $(m^{q^n} n^1)$ of $\F_q^{m+n}$.
	The latter are the same as partial $(m-1)$-spreads of size $m^{q^n}$ in $\PG(m+n-1,q)$ whose hole space is of projective dimension $n-1$.
	Via the map $\Phi$, the inner automorphism group of any of these MRD codes is isomorphic to the automorphism group of the corresponding vector space partition.
\end{lemma}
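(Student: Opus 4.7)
The plan is to use Fact~\ref{fct:mrd_structure} to set up the dictionary between lifted MRD codes and vector space partitions at the level of objects, and then to use the intertwining relation $\Lambda\circ\phi=\Phi(\phi)\circ\Lambda$ together with the uniqueness of the marked $n$-subspace $S$ to upgrade this dictionary to isomorphism classes and automorphism groups. Throughout I write $V=\F_q^{m+n}$ and let $S$ denote the span of the last $n$ unit vectors.

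First I would establish the object-level bijection. Specializing Fact~\ref{fct:mrd_structure}(a) to $\delta=m$ gives $t=1$, so starting from an $m\times n$ MRD code $\mathcal{C}$ of minimum rank distance $m$, the lifted code $\Lambda(\mathcal{C})$ is a family of $q^n$ pairwise disjoint $m$-subspaces, each disjoint from $S$, and every point of $V\setminus S$ lies in a unique codeword. Adjoining $S$ therefore produces a vector space partition of $V$ of type $(m^{q^n}n^1)$. Conversely, given any such partition, a basis change sending its unique $n$-block to $S$ brings us into the situation of Fact~\ref{fct:mrd_structure}(b), which realizes the remaining $m$-blocks as $\Lambda(\mathcal{C})$ for a suitable MRD code. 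To recognize these partitions as the claimed partial $(m-1)$-spreads, I would simply count: $q^n\cdot\qbinom{m}{1}{q}$ covered points leave exactly $\qbinom{n}{1}{q}$ holes, so the hole space has projective dimension at least $n-1$, with equality forcing the holes to fill a single $n$-subspace that is automatically disjoint from every block and supplies the missing $n$-block.

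For the automorphism and isomorphism part, the key observation I would exploit is that inside $\Lambda(\mathcal{C})$ the subspace $S$ is intrinsically distinguished: if $S'\in\qbinom{V}{n}{q}$ is disjoint from every codeword, then by Fact~\ref{fct:mrd_structure}(a) any $1$-subspace of $S'$ not lying in $S$ would have to be contained in some codeword, contradicting $S'\cap(\text{codeword})=\{\mathbf{0}\}$; hence $S'\subseteq S$ and so $S'=S$ by dimension. Consequently every $\psi\in\PGammaL(V)$ mapping $\Lambda(\mathcal{C})$ onto $\Lambda(\mathcal{C}')$ automatically sends $S$ to the corresponding distinguished $n$-block and therefore lies in the image of $\Phi$. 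Combined with the intertwining identity $\Lambda\circ\phi=\Phi(\phi)\circ\Lambda$ and the injectivity of $\Phi$, this yields both the claimed equivalence of inner isomorphism classes and the identification $\Phi^{-1}(\Aut(\Lambda(\mathcal{C})))\cong\Aut(\mathcal{P})$, where $\mathcal{P}$ is the associated vector space partition.

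The main obstacle I anticipate is exactly this uniqueness-of-$S$ argument: without it one only gets that inner automorphisms of $\mathcal{C}$ correspond to those automorphisms of $\Lambda(\mathcal{C})$ that fix $S$, rather than to the full automorphism group of the vector space partition. The remaining steps amount to routine unwinding of Fact~\ref{fct:mrd_structure} and of the definition of $\Phi$.
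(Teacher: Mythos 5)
Your proposal is correct and follows essentially the same route as the paper: the object-level dictionary from Fact~\ref{fct:mrd_structure} with $\delta=m$, the hole-counting identification with partial $(m-1)$-spreads, and the upgrade to isomorphism classes and automorphism groups via the uniqueness of the $n$-subspace $S$ disjoint from all lifted codewords together with the intertwining relation $\Lambda\circ\phi=\Phi(\phi)\circ\Lambda$ and the injectivity of $\Phi$. The paper merely packages the last step as a separate preceding lemma (whose proof is exactly your uniqueness-of-$S$ argument) and then states Lemma~\ref{lem:mrd_to_vsp} as its specialization to $d=m$, so the two arguments coincide in substance.
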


In the particular case of $(m,n) = (3,4)$, the relevant vector space partitions are given by the partial plane spreads in $\PG(6,2)$ of size $16$ having a hole space of dimension $4$, whose number has been determined in Theorem~\ref{thm:size16}.
As we are in the non-square case $m < n$, any rank metric isomorphism is inner.
We get:

\begin{theorem}
\label{thm:mrd}
\begin{enumerate}[(a)]
	\item
		There are $37$ isomorphism types of vector space partitions of type $(3^{16} 4^1)$ in $\PG(6,2)$.
	\item
		There are $37$ isomorphism types of binary $3\times 4$ MRD codes of minimum rank distance $3$.
\end{enumerate}
\end{theorem}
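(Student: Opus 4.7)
The plan is to deduce both statements from the classification of partial plane spreads of size $16$ in $\PG(6,2)$ established in Theorem~\ref{thm:size16}, together with the MRD-to-vector-space-partition correspondence of Lemma~\ref{lem:mrd_to_vsp}. Since the hard combinatorial work has already been done, the proof is essentially a bookkeeping exercise, and I expect no substantial obstacle.

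For part~(a), I would set up a bijection between isomorphism classes of vector space partitions of type $(3^{16} 4^1)$ and isomorphism classes of partial plane spreads of size $16$ with $n := \dim\langle N\rangle = 4$. In one direction, given such a vector space partition, removing its unique $4$-dimensional block produces a partial plane spread $\mathcal{S}$ of size $16$ whose hole set $N$ is exactly the point set of that block, so $\dim\langle N\rangle = 4$. In the other direction, part~(a)(i) of Theorem~\ref{thm:size16} asserts that in the $n=4$ case we already have $N = \langle N\rangle$, so $\langle N\rangle$ is the unique solid disjoint from the blocks of $\mathcal{S}$, and adjoining it gives a partition of the required type. The $4$-block is intrinsically characterised as $\langle N\rangle$, so this back-and-forth is $\PGL(7,2)$-equivariant and descends to isomorphism classes. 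Part~(a)(i) of Theorem~\ref{thm:size16} then supplies the count~$37$.

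For part~(b), I would apply Lemma~\ref{lem:mrd_to_vsp} with $(m,n,q) = (3,4,2)$: via the lifting map~$\Lambda$, inner isomorphism classes of binary $3\times 4$ MRD codes of minimum rank distance $3$ correspond to isomorphism classes of vector space partitions of $\F_2^7$ of type $(3^{2^{4}} 4^1) = (3^{16} 4^1)$. Since we are in the non-square case $m < n$, every rank-metric isomorphism is inner (as noted just before the theorem statement), and hence inner isomorphism classes of the MRD codes coincide with their full isomorphism classes. Combining with part~(a) immediately gives the same count~$37$.

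The only point that warrants mild care is the well-definedness of the bijection in part~(a): one must observe that the solid adjoined to form the partition is canonically recovered as the span of the hole set of the partial plane spread, so the correspondence respects the action of the collineation group on both sides. Beyond this trivial verification, the theorem follows directly from the two preparatory results cited above.
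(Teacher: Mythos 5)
Your proposal is correct and follows essentially the same route as the paper: the paper also obtains the count by identifying vector space partitions of type $(3^{16}4^1)$ with the $37$ partial plane spreads of size $16$ having hole space dimension $4$ from Theorem~\ref{thm:size16}\ref{thm:size16:ext:n4}, and then invokes Lemma~\ref{lem:mrd_to_vsp} together with the observation that in the non-square case $m<n$ every rank-metric isomorphism is inner. Your extra care about the $4$-block being canonically recovered as $\langle N\rangle$ is exactly the content already built into Lemma~\ref{lem:mrd_to_vsp}, so nothing is missing.
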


Given the recent research activity on the isomorphism types of MRD codes, it is worth to give a closer analysis:

\begin{theorem}
	The $37$ classes of binary $3\times 4$ MRD codes of minimum distance $3$ fall into $7$ linear and $30$ non-linear ones.
	The orders of the automorphism groups of the $7$ linear ones are $2688$, $960$, $384$, $288$, $112$, $96$ and $64$.%
	\footnote{By linearity, these automorphisms groups contain the translation subgroup $\{ A\mapsto A + B \mid B\in\mathcal{C}\} \cong (\F_2^4, +)$ of order $16$.}
	Representatives of the $7$ linear MRD codes in descending order of the automorphism group are shown in Table~\ref{tbl:mrd_lin}.
	The orders of the automorphism groups of the $30$ nonlinear ones are given by the distribution $(48^3 42^1 36^1 24^4 20^1 18^1 16^1 12^2 9^1 8^2 6^6 4^2 3^2 2^3)$.%
	Representatives of the $30$ nonlinear MRD codes in descending order of the automorphism group are shown in Tables~\ref{tbl:mrd_nonlin_1},~\ref{tbl:mrd_nonlin_2} and~\ref{tbl:mrd_nonlin_3}.
\end{theorem}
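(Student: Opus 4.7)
The plan is to exploit the bijection provided by Lemma~\ref{lem:mrd_to_vsp} together with the homomorphism $\Phi$ to reduce every claim in the theorem to a computation on the $37$ vector space partitions of type $(3^{16}4^1)$ already classified in Theorem~\ref{thm:mrd}. From each such vector space partition I would first produce a representative whose unique $4$-block coincides with the span $S$ of the last four unit vectors of $\F_2^7$. Then the $16$ plane-blocks all have trivial intersection with $S$, so each is of the form $\langle(I_3\mid A)\rangle$ for a unique $A\in\F_2^{3\times 4}$, and the resulting set $\mathcal{C}$ of $16$ matrices is the $3\times 4$ MRD code associated to the partition via $\Lambda^{-1}$.

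For the linearity statement I would apply the criterion recalled in Subsection~2.4 just before Fact~\ref{fct:mrd_structure}: an inner isomorphism class of rank-metric codes is linear if and only if some (equivalently every) translate $\mathcal{C}-B = \{A-B\mid A\in\mathcal{C}\}$ with $B\in\mathcal{C}$ is an $\F_2$-subspace of $\F_2^{3\times 4}$. Since $\#\mathcal{C}=16=2^4$, the test reduces on each of the $37$ representatives to fixing some $B\in\mathcal{C}$ and checking that $\mathcal{C}-B$ is closed under addition; the expected outcome is exactly $7$ linear and $30$ non-linear classes.

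For the automorphism-group orders I would use the identity $\Aut_{\mathrm{inner}}(\mathcal{C})=\Phi^{-1}(\Aut(\Lambda(\mathcal{C})))$ from the previous lemma, together with the fact that in the non-square case $m\neq n$ every rank-metric automorphism is inner. By Fact~\ref{fct:mrd_structure}(a) the $n$-subspace $S$ is the unique subspace of $\F_2^{m+n}$ of dimension $n$ that is disjoint from all codewords of $\Lambda(\mathcal{C})$, so every element of $\Aut(\Lambda(\mathcal{C}))$ fixes $S$ and therefore lies in the image of $\Phi$. Combined with the injectivity of $\Phi$ established earlier, this yields $\#\Aut(\mathcal{C})=\#\Aut(\Lambda(\mathcal{C}))$, and the latter number is produced directly by the canonical-form procedure of~\cite{Feulner-2013-arXiv:1305.1193} used in Theorem~\ref{thm:mrd}. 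Collecting these numbers across the $37$ classes yields the two distributions stated in the theorem.

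The explicit matrix tables are then obtained by sorting the $7$ linear and the $30$ non-linear representatives by descending automorphism-group order, converting each plane-block $\langle(I_3\mid A)\rangle$ back to the matrix $A$, and printing. There is no serious theoretical obstacle, as all substantial work was done in Theorem~\ref{thm:size16} and Theorem~\ref{thm:mrd}; the only delicate point is bookkeeping, namely translating each non-linear representative by one of its own codewords before the linearity test and verifying that the stabiliser orders computed on the subspace-code side are transported unchanged to the rank-metric side through $\Phi$.
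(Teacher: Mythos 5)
Your proposal is correct and follows essentially the same route as the paper: the paper also obtains the $37$ classes from the partial plane spreads of size $16$ with hole space dimension $4$ via Lemma~\ref{lem:mrd_to_vsp}, tests linearity by translating a representative by one of its codewords as described in the preliminaries, and transports automorphism group orders through the injective homomorphism $\Phi$ (using that $S$ is the unique $n$-subspace disjoint from all codewords and that all automorphisms are inner since $m\neq n$), with the actual counts and group orders produced computationally. No substantive difference from the paper's (largely implicit, computational) argument.
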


\begin{table}
	\caption{Linear binary $3\times 4$ MRD codes of minimum rank distance $3$}
	\label{tbl:mrd_lin}
	\vspace{-5mm}
    \begin{align*} 
    & \left\langle %2688
    \left(\begin{array}{rrrr}
    0 & 1 & 0 & 0 \\
    0 & 0 & 1 & 0 \\
    0 & 0 & 0 & 1
    \end{array}\right),
    \left(\begin{array}{rrrr}
    1 & 0 & 0 & 0 \\
    0 & 0 & 1 & 1 \\
    0 & 0 & 1 & 0
    \end{array}\right),
    \left(\begin{array}{rrrr}
    0 & 1 & 1 & 0 \\
    1 & 0 & 0 & 1 \\
    0 & 1 & 0 & 0
    \end{array}\right),
    \left(\begin{array}{rrrr}
    0 & 0 & 0 & 1 \\
    1 & 1 & 0 & 1 \\
    1 & 0 & 1 & 0
    \end{array}\right)
    \right\rangle_{\!\F_2} \\
    & \left\langle %960
    \left(\begin{array}{rrrr}
    0 & 1 & 0 & 0 \\
    0 & 0 & 1 & 0 \\
    0 & 0 & 0 & 1
    \end{array}\right),
    \left(\begin{array}{rrrr}
    1 & 0 & 0 & 0 \\
    0 & 0 & 1 & 1 \\
    0 & 0 & 1 & 0
    \end{array}\right),
    \left(\begin{array}{rrrr}
    1 & 0 & 0 & 1 \\
    1 & 0 & 0 & 0 \\
    0 & 1 & 0 & 0
    \end{array}\right),
    \left(\begin{array}{rrrr}
    1 & 1 & 1 & 0 \\
    1 & 1 & 0 & 0 \\
    1 & 0 & 0 & 0
    \end{array}\right)
    \right\rangle_{\!\F_2} \\
    & \left\langle %384
    \left(\begin{array}{rrrr}
    0 & 1 & 0 & 0 \\
    0 & 0 & 1 & 0 \\
    0 & 0 & 0 & 1
    \end{array}\right),
    \left(\begin{array}{rrrr}
    1 & 0 & 0 & 0 \\
    0 & 0 & 1 & 1 \\
    0 & 0 & 1 & 0
    \end{array}\right),
    \left(\begin{array}{rrrr}
    0 & 1 & 1 & 0 \\
    0 & 1 & 0 & 0 \\
    1 & 0 & 0 & 0
    \end{array}\right),
    \left(\begin{array}{rrrr}
    1 & 0 & 1 & 1 \\
    1 & 0 & 0 & 0 \\
    1 & 1 & 0 & 0
    \end{array}\right)
    \right\rangle_{\!\F_2} \\
    & \left\langle %288
    \left(\begin{array}{rrrr}
    0 & 1 & 0 & 0 \\
    0 & 0 & 1 & 0 \\
    0 & 0 & 0 & 1
    \end{array}\right),
    \left(\begin{array}{rrrr}
    1 & 0 & 0 & 0 \\
    0 & 0 & 0 & 1 \\
    0 & 1 & 1 & 0
    \end{array}\right),
    \left(\begin{array}{rrrr}
    1 & 0 & 1 & 0 \\
    1 & 0 & 0 & 0 \\
    1 & 0 & 0 & 1
    \end{array}\right),
    \left(\begin{array}{rrrr}
    0 & 1 & 0 & 1 \\
    0 & 1 & 0 & 0 \\
    1 & 1 & 0 & 0
    \end{array}\right)
    \right\rangle_{\!\F_2} \\
    & \left\langle %112
    \left(\begin{array}{rrrr}
    0 & 1 & 0 & 0 \\
    0 & 0 & 1 & 0 \\
    0 & 0 & 0 & 1
    \end{array}\right),
    \left(\begin{array}{rrrr}
    1 & 0 & 0 & 0 \\
    0 & 0 & 0 & 1 \\
    0 & 1 & 1 & 0
    \end{array}\right),
    \left(\begin{array}{rrrr}
    0 & 0 & 0 & 1 \\
    0 & 1 & 0 & 0 \\
    1 & 0 & 1 & 1
    \end{array}\right),
    \left(\begin{array}{rrrr}
    0 & 1 & 1 & 1 \\
    1 & 0 & 0 & 1 \\
    0 & 1 & 0 & 0
    \end{array}\right)
    \right\rangle_{\!\F_2} \\
    & \left\langle %96
    \left(\begin{array}{rrrr}
    0 & 1 & 0 & 0 \\
    0 & 0 & 1 & 0 \\
    0 & 0 & 0 & 1
    \end{array}\right),
    \left(\begin{array}{rrrr}
    1 & 0 & 0 & 0 \\
    0 & 0 & 0 & 1 \\
    0 & 1 & 1 & 0
    \end{array}\right),
    \left(\begin{array}{rrrr}
    0 & 0 & 1 & 0 \\
    0 & 1 & 1 & 1 \\
    1 & 0 & 0 & 0
    \end{array}\right),
    \left(\begin{array}{rrrr}
    0 & 0 & 1 & 1 \\
    1 & 0 & 0 & 0 \\
    1 & 0 & 1 & 0
    \end{array}\right)
    \right\rangle_{\!\F_2} \\
    & \left\langle %64
    \left(\begin{array}{rrrr}
    0 & 1 & 0 & 0 \\
    0 & 0 & 1 & 0 \\
    0 & 0 & 0 & 1
    \end{array}\right),
    \left(\begin{array}{rrrr}
    1 & 0 & 0 & 0 \\
    0 & 0 & 1 & 1 \\
    0 & 0 & 1 & 0
    \end{array}\right),
    \left(\begin{array}{rrrr}
    0 & 0 & 1 & 0 \\
    1 & 1 & 0 & 0 \\
    0 & 1 & 0 & 0
    \end{array}\right),
    \left(\begin{array}{rrrr}
    1 & 0 & 1 & 1 \\
    0 & 1 & 0 & 0 \\
    1 & 0 & 1 & 0
    \end{array}\right)
    \right\rangle_{\!\F_2}
	\end{align*}
\end{table}

\begin{table}
    \caption{Nonlinear binary $3\times 4$ MRD codes of minimum rank distance $3$, part 1}
    \label{tbl:mrd_nonlin_1}
    \centering
\vspace{2mm}
    $\begin{array}{r@{,}c@{,}c@{,}c@{,}c@{,}c@{,}c@{,}l}
%1
\Big\{\left(\begin{smallmatrix}0&0&0&0\\0&0&0&0\\0&0&0&0\end{smallmatrix}\right)&
\left(\begin{smallmatrix}0&1&0&0\\0&0&1&0\\0&0&0&1\end{smallmatrix}\right)&
\left(\begin{smallmatrix}1&0&0&0\\0&1&1&0\\0&1&0&0\end{smallmatrix}\right)&
\left(\begin{smallmatrix}0&0&1&0\\0&0&0&1\\1&1&0&0\end{smallmatrix}\right)&
\left(\begin{smallmatrix}1&0&1&1\\0&1&0&0\\0&0&1&0\end{smallmatrix}\right)&
\left(\begin{smallmatrix}0&1&0&1\\1&0&0&0\\0&1&1&0\end{smallmatrix}\right)&
\left(\begin{smallmatrix}0&0&0&1\\0&0&1&1\\1&0&1&0\end{smallmatrix}\right)&
\left(\begin{smallmatrix}0&0&1&1\\1&0&1&0\\0&1&0&1\end{smallmatrix}\right),\\[+2mm]
\left(\begin{smallmatrix}1&1&0&0\\1&1&0&1\\1&0&0&0\end{smallmatrix}\right)&
\left(\begin{smallmatrix}1&1&1&0\\1&0&0&1\\0&0&1&1\end{smallmatrix}\right)&
\left(\begin{smallmatrix}1&0&0&1\\0&1&0&1\\1&1&1&0\end{smallmatrix}\right)&
\left(\begin{smallmatrix}0&1&1&0\\1&1&1&0\\1&0&0&1\end{smallmatrix}\right)&
\left(\begin{smallmatrix}0&1&1&1\\1&1&0&0\\1&1&0&1\end{smallmatrix}\right)&
\left(\begin{smallmatrix}1&0&1&0\\1&1&1&1\\1&0&1&1\end{smallmatrix}\right)&
\left(\begin{smallmatrix}1&1&0&1\\0&1&1&1\\1&1&1&1\end{smallmatrix}\right)&
\left(\begin{smallmatrix}1&1&1&1\\1&0&1&1\\0&1&1&1\end{smallmatrix}\right)\Big\}\\[+4mm]
%2
\Big\{\left(\begin{smallmatrix}0&0&0&0\\0&0&0&0\\0&0&0&0\end{smallmatrix}\right)&
\left(\begin{smallmatrix}0&1&0&0\\0&0&1&0\\0&0&0&1\end{smallmatrix}\right)&
\left(\begin{smallmatrix}1&0&0&0\\0&0&0&1\\0&1&1&0\end{smallmatrix}\right)&
\left(\begin{smallmatrix}0&0&1&1\\1&0&0&0\\0&0&1&0\end{smallmatrix}\right)&
\left(\begin{smallmatrix}0&0&1&0\\1&1&0&0\\1&0&1&0\end{smallmatrix}\right)&
\left(\begin{smallmatrix}0&1&0&1\\0&1&0&0\\1&1&0&0\end{smallmatrix}\right)&
\left(\begin{smallmatrix}0&0&0&1\\0&1&1&0\\1&1&0&1\end{smallmatrix}\right)&
\left(\begin{smallmatrix}1&0&1&0\\1&0&0&1\\0&1&0&1\end{smallmatrix}\right),\\[+2mm]
\left(\begin{smallmatrix}1&1&1&1\\0&1&0&1\\1&0&0&0\end{smallmatrix}\right)&
\left(\begin{smallmatrix}0&1&1&1\\1&0&1&0\\0&0&1&1\end{smallmatrix}\right)&
\left(\begin{smallmatrix}1&1&0&0\\0&0&1&1\\0&1&1&1\end{smallmatrix}\right)&
\left(\begin{smallmatrix}1&1&1&0\\1&0&1&1\\0&1&0&0\end{smallmatrix}\right)&
\left(\begin{smallmatrix}0&1&1&0\\1&1&1&0\\1&0&1&1\end{smallmatrix}\right)&
\left(\begin{smallmatrix}1&0&1&1\\0&1&1&1\\1&0&0&1\end{smallmatrix}\right)&
\left(\begin{smallmatrix}1&0&0&1\\1&1&0&1\\1&1&1&1\end{smallmatrix}\right)&
\left(\begin{smallmatrix}1&1&0&1\\1&1&1&1\\1&1&1&0\end{smallmatrix}\right)\Big\}\\[+4mm]
%3
\Big\{\left(\begin{smallmatrix}0&0&0&0\\0&0&0&0\\0&0&0&0\end{smallmatrix}\right)&
\left(\begin{smallmatrix}1&0&0&0\\0&0&1&1\\0&0&1&0\end{smallmatrix}\right)&
\left(\begin{smallmatrix}1&0&1&1\\1&0&0&0\\0&1&0&0\end{smallmatrix}\right)&
\left(\begin{smallmatrix}0&1&0&1\\1&1&0&0\\1&0&0&0\end{smallmatrix}\right)&
\left(\begin{smallmatrix}0&1&1&1\\0&0&1&0\\0&0&0&1\end{smallmatrix}\right)&
\left(\begin{smallmatrix}0&0&1&0\\0&1&0&1\\1&1&0&0\end{smallmatrix}\right)&
\left(\begin{smallmatrix}0&0&0&1\\1&1&1&0\\1&0&0&1\end{smallmatrix}\right)&
\left(\begin{smallmatrix}0&1&0&0\\1&0&0&1\\0&1&1&1\end{smallmatrix}\right),\\[+2mm]
\left(\begin{smallmatrix}1&1&0&0\\1&0&1&0\\0&1&0&1\end{smallmatrix}\right)&
\left(\begin{smallmatrix}1&1&1&1\\0&0&0&1\\0&0&1&1\end{smallmatrix}\right)&
\left(\begin{smallmatrix}0&0&1&1\\1&0&1&1\\0&1&1&0\end{smallmatrix}\right)&
\left(\begin{smallmatrix}1&0&1&0\\0&1&1&0\\1&1&1&0\end{smallmatrix}\right)&
\left(\begin{smallmatrix}1&0&0&1\\1&1&0&1\\1&0&1&1\end{smallmatrix}\right)&
\left(\begin{smallmatrix}0&1&1&0\\0&1&1&1\\1&1&0&1\end{smallmatrix}\right)&
\left(\begin{smallmatrix}1&1&1&0\\0&1&0&0\\1&1&1&1\end{smallmatrix}\right)&
\left(\begin{smallmatrix}1&1&0&1\\1&1&1&1\\1&0&1&0\end{smallmatrix}\right)\Big\}\\[+4mm]
%4
\Big\{\left(\begin{smallmatrix}0&0&0&0\\0&0&0&0\\0&0&0&0\end{smallmatrix}\right)&
\left(\begin{smallmatrix}0&1&0&0\\0&0&1&0\\0&0&0&1\end{smallmatrix}\right)&
\left(\begin{smallmatrix}1&1&0&0\\0&1&0&0\\1&0&1&0\end{smallmatrix}\right)&
\left(\begin{smallmatrix}1&0&0&1\\1&0&1&0\\1&0&0&0\end{smallmatrix}\right)&
\left(\begin{smallmatrix}0&0&1&1\\0&0&0&1\\1&1&0&1\end{smallmatrix}\right)&
\left(\begin{smallmatrix}1&1&1&0\\0&1&0&1\\0&0&1&0\end{smallmatrix}\right)&
\left(\begin{smallmatrix}1&1&0&1\\1&0&0&0\\1&0&0&1\end{smallmatrix}\right)&
\left(\begin{smallmatrix}1&0&0&0\\0&1&1&0\\1&0&1&1\end{smallmatrix}\right),\\[+2mm]
\left(\begin{smallmatrix}0&0&1&0\\1&1&0&1\\0&1&0&1\end{smallmatrix}\right)&
\left(\begin{smallmatrix}0&1&1&1\\0&0&1&1\\1&1&0&0\end{smallmatrix}\right)&
\left(\begin{smallmatrix}1&0&1&0\\0&1&1&1\\0&0&1&1\end{smallmatrix}\right)&
\left(\begin{smallmatrix}1&0&1&1\\1&1&0&0\\0&1&1&0\end{smallmatrix}\right)&
\left(\begin{smallmatrix}0&0&0&1\\1&0&1&1\\1&1&1&0\end{smallmatrix}\right)&
\left(\begin{smallmatrix}0&1&1&0\\1&1&1&1\\0&1&0&0\end{smallmatrix}\right)&
\left(\begin{smallmatrix}0&1&0&1\\1&0&0&1\\1&1&1&1\end{smallmatrix}\right)&
\left(\begin{smallmatrix}1&1&1&1\\1&1&1&0\\0&1&1&1\end{smallmatrix}\right)\Big\}\\[+4mm]
%5
\Big\{\left(\begin{smallmatrix}0&0&0&0\\0&0&0&0\\0&0&0&0\end{smallmatrix}\right)&
\left(\begin{smallmatrix}1&0&1&0\\1&0&0&0\\0&1&0&0\end{smallmatrix}\right)&
\left(\begin{smallmatrix}0&0&0&1\\0&1&0&1\\0&1&1&0\end{smallmatrix}\right)&
\left(\begin{smallmatrix}1&0&0&0\\0&0&1&0\\1&1&0&1\end{smallmatrix}\right)&
\left(\begin{smallmatrix}1&0&0&1\\0&1&1&0\\0&0&0&1\end{smallmatrix}\right)&
\left(\begin{smallmatrix}0&1&1&1\\0&1&0&0\\1&0&1&0\end{smallmatrix}\right)&
\left(\begin{smallmatrix}0&1&0&1\\1&0&0&1\\0&0&1&1\end{smallmatrix}\right)&
\left(\begin{smallmatrix}0&0&1&1\\1&0&1&1\\0&0&1&0\end{smallmatrix}\right),\\[+2mm]
\left(\begin{smallmatrix}0&1&0&0\\0&0&1&1\\1&0&1&1\end{smallmatrix}\right)&
\left(\begin{smallmatrix}1&1&1&1\\0&0&0&1\\0&1&0&1\end{smallmatrix}\right)&
\left(\begin{smallmatrix}1&1&0&1\\1&1&0&0\\1&0&0&1\end{smallmatrix}\right)&
\left(\begin{smallmatrix}1&0&1&1\\1&1&1&0\\1&0&0&0\end{smallmatrix}\right)&
\left(\begin{smallmatrix}1&1&0&0\\1&0&1&0\\0&1&1&1\end{smallmatrix}\right)&
\left(\begin{smallmatrix}1&1&1&0\\0&1&1&1\\1&1&0&0\end{smallmatrix}\right)&
\left(\begin{smallmatrix}0&0&1&0\\1&1&1&1\\1&1&1&0\end{smallmatrix}\right)&
\left(\begin{smallmatrix}0&1&1&0\\1&1&0&1\\1&1&1&1\end{smallmatrix}\right)\Big\}\\[+4mm]
%6
\Big\{\left(\begin{smallmatrix}0&0&0&0\\0&0&0&0\\0&0&0&0\end{smallmatrix}\right)&
\left(\begin{smallmatrix}0&0&1&0\\0&0&0&1\\1&1&0&0\end{smallmatrix}\right)&
\left(\begin{smallmatrix}0&0&0&1\\0&1&1&0\\0&1&0&0\end{smallmatrix}\right)&
\left(\begin{smallmatrix}0&0&1&1\\0&0&1&0\\1&0&0&1\end{smallmatrix}\right)&
\left(\begin{smallmatrix}1&0&1&0\\0&1&0&0\\0&0&1&1\end{smallmatrix}\right)&
\left(\begin{smallmatrix}0&1&0&1\\1&1&0&1\\0&0&0&1\end{smallmatrix}\right)&
\left(\begin{smallmatrix}1&1&1&0\\1&0&1&0\\0&0&1&0\end{smallmatrix}\right)&
\left(\begin{smallmatrix}0&1&0&0\\1&0&1&1\\0&1&0&1\end{smallmatrix}\right),\\[+2mm]
\left(\begin{smallmatrix}1&1&0&1\\1&0&0&0\\1&0&1&1\end{smallmatrix}\right)&
\left(\begin{smallmatrix}1&0&1&1\\1&0&0&1\\0&1&1&0\end{smallmatrix}\right)&
\left(\begin{smallmatrix}0&1&1&0\\1&1&0&0\\1&1&0&1\end{smallmatrix}\right)&
\left(\begin{smallmatrix}1&0&0&0\\0&1&0&1\\1&1&1&1\end{smallmatrix}\right)&
\left(\begin{smallmatrix}0&1&1&1\\1&1&1&0\\1&0&0&0\end{smallmatrix}\right)&
\left(\begin{smallmatrix}1&1&0&0\\0&1&1&1\\1&1&1&0\end{smallmatrix}\right)&
\left(\begin{smallmatrix}1&0&0&1\\1&1&1&1\\1&0&1&0\end{smallmatrix}\right)&
\left(\begin{smallmatrix}1&1&1&1\\0&0&1&1\\0&1&1&1\end{smallmatrix}\right)\Big\}\\[+4mm]
%7
\Big\{\left(\begin{smallmatrix}0&0&0&0\\0&0&0&0\\0&0&0&0\end{smallmatrix}\right)&
\left(\begin{smallmatrix}0&1&0&0\\1&0&0&0\\1&0&0&1\end{smallmatrix}\right)&
\left(\begin{smallmatrix}0&0&1&1\\0&0&1&0\\1&0&0&0\end{smallmatrix}\right)&
\left(\begin{smallmatrix}0&0&0&1\\0&0&1&1\\0&1&0&1\end{smallmatrix}\right)&
\left(\begin{smallmatrix}0&0&1&0\\0&0&0&1\\1&1&0&1\end{smallmatrix}\right)&
\left(\begin{smallmatrix}1&0&1&0\\0&1&0&0\\0&0&1&1\end{smallmatrix}\right)&
\left(\begin{smallmatrix}0&1&1&1\\1&1&0&0\\0&1&0&0\end{smallmatrix}\right)&
\left(\begin{smallmatrix}1&1&1&0\\0&1&1&0\\0&0&1&0\end{smallmatrix}\right),\\[+2mm]
\left(\begin{smallmatrix}1&1&0&0\\0&1&0&1\\1&0&1&1\end{smallmatrix}\right)&
\left(\begin{smallmatrix}0&1&0&1\\1&0&1&0\\0&1&1&1\end{smallmatrix}\right)&
\left(\begin{smallmatrix}1&0&1&1\\1&1&1&0\\0&0&0&1\end{smallmatrix}\right)&
\left(\begin{smallmatrix}1&0&0&0\\1&1&1&1\\0&1&1&0\end{smallmatrix}\right)&
\left(\begin{smallmatrix}1&1&0&1\\0&1&1&1\\1&1&0&0\end{smallmatrix}\right)&
\left(\begin{smallmatrix}1&1&1&1\\1&0&0&1\\1&0&1&0\end{smallmatrix}\right)&
\left(\begin{smallmatrix}0&1&1&0\\1&0&1&1\\1&1&1&0\end{smallmatrix}\right)&
\left(\begin{smallmatrix}1&0&0&1\\1&1&0&1\\1&1&1&1\end{smallmatrix}\right)\Big\}\\[+4mm]
%8
\Big\{\left(\begin{smallmatrix}0&0&0&0\\0&0&0&0\\0&0&0&0\end{smallmatrix}\right)&
\left(\begin{smallmatrix}1&0&0&0\\0&0&1&0\\0&0&0&1\end{smallmatrix}\right)&
\left(\begin{smallmatrix}0&0&1&0\\0&1&0&0\\1&0&0&0\end{smallmatrix}\right)&
\left(\begin{smallmatrix}0&1&0&0\\0&0&0&1\\0&0&1&1\end{smallmatrix}\right)&
\left(\begin{smallmatrix}1&1&1&0\\0&0&1&1\\0&0&1&0\end{smallmatrix}\right)&
\left(\begin{smallmatrix}0&0&1&1\\1&0&1&0\\1&1&0&0\end{smallmatrix}\right)&
\left(\begin{smallmatrix}0&0&0&1\\1&1&1&0\\0&1&0&1\end{smallmatrix}\right)&
\left(\begin{smallmatrix}1&0&0&1\\0&1&1&1\\1&0&1&0\end{smallmatrix}\right),\\[+2mm]
\left(\begin{smallmatrix}1&1&1&1\\1&1&0&0\\0&1&0&0\end{smallmatrix}\right)&
\left(\begin{smallmatrix}0&1&0&1\\1&0&0&1\\1&1&0&1\end{smallmatrix}\right)&
\left(\begin{smallmatrix}1&0&1&0\\1&0&0&0\\1&1&1&1\end{smallmatrix}\right)&
\left(\begin{smallmatrix}0&1&1&1\\0&1&1&0\\1&0&0&1\end{smallmatrix}\right)&
\left(\begin{smallmatrix}1&1&0&1\\0&1&0&1\\1&0&1&1\end{smallmatrix}\right)&
\left(\begin{smallmatrix}1&1&0&0\\1&0&1&1\\1&1&1&0\end{smallmatrix}\right)&
\left(\begin{smallmatrix}0&1&1&0\\1&1&0&1\\0&1&1&1\end{smallmatrix}\right)&
\left(\begin{smallmatrix}1&0&1&1\\1&1&1&1\\0&1&1&0\end{smallmatrix}\right)\Big\}\\[+4mm]
%9
\Big\{\left(\begin{smallmatrix}0&0&0&0\\0&0&0&0\\0&0&0&0\end{smallmatrix}\right)&
\left(\begin{smallmatrix}1&0&0&0\\0&0&1&1\\0&0&1&0\end{smallmatrix}\right)&
\left(\begin{smallmatrix}0&1&0&0\\1&0&0&1\\0&1&0&1\end{smallmatrix}\right)&
\left(\begin{smallmatrix}0&0&0&1\\1&0&1&1\\0&1&0&0\end{smallmatrix}\right)&
\left(\begin{smallmatrix}1&0&0&1\\1&0&0&0\\0&1&1&0\end{smallmatrix}\right)&
\left(\begin{smallmatrix}0&1&1&1\\0&0&1&0\\0&0&0&1\end{smallmatrix}\right)&
\left(\begin{smallmatrix}0&1&1&0\\0&1&0&0\\1&1&1&0\end{smallmatrix}\right)&
\left(\begin{smallmatrix}1&0&1&1\\1&1&0&0\\1&0&0&0\end{smallmatrix}\right),\\[+2mm]
\left(\begin{smallmatrix}1&1&1&1\\0&0&0&1\\0&0&1&1\end{smallmatrix}\right)&
\left(\begin{smallmatrix}1&0&1&0\\0&1&0&1\\1&1&0&1\end{smallmatrix}\right)&
\left(\begin{smallmatrix}1&1&0&0\\1&0&1&0\\0&1&1&1\end{smallmatrix}\right)&
\left(\begin{smallmatrix}0&0&1&0\\0&1&1&0\\1&1&1&1\end{smallmatrix}\right)&
\left(\begin{smallmatrix}1&1&0&1\\1&1&1&0\\1&0&0&1\end{smallmatrix}\right)&
\left(\begin{smallmatrix}0&0&1&1\\1&1&1&1\\1&0&1&0\end{smallmatrix}\right)&
\left(\begin{smallmatrix}1&1&1&0\\0&1&1&1\\1&1&0&0\end{smallmatrix}\right)&
\left(\begin{smallmatrix}0&1&0&1\\1&1&0&1\\1&0&1&1\end{smallmatrix}\right)\Big\}\\[+4mm]
%10
\Big\{\left(\begin{smallmatrix}0&0&0&0\\0&0&0&0\\0&0&0&0\end{smallmatrix}\right)&
\left(\begin{smallmatrix}0&1&0&0\\0&1&1&1\\0&0&0&1\end{smallmatrix}\right)&
\left(\begin{smallmatrix}0&0&0&1\\1&1&0&0\\0&1&0&1\end{smallmatrix}\right)&
\left(\begin{smallmatrix}1&0&1&0\\0&1&0&0\\0&0&1&1\end{smallmatrix}\right)&
\left(\begin{smallmatrix}0&0&1&1\\0&0&0&1\\1&0&0&1\end{smallmatrix}\right)&
\left(\begin{smallmatrix}1&0&0&0\\0&0&1&0\\1&1&1&1\end{smallmatrix}\right)&
\left(\begin{smallmatrix}0&1&1&1\\0&0&1&1\\1&0&0&0\end{smallmatrix}\right)&
\left(\begin{smallmatrix}0&0&1&0\\1&1&0&1\\1&1&0&0\end{smallmatrix}\right),\\[+2mm]
\left(\begin{smallmatrix}1&1&1&0\\0&1&1&0\\0&0&1&0\end{smallmatrix}\right)&
\left(\begin{smallmatrix}1&0&0&1\\0&1&0&1\\1&0&1&0\end{smallmatrix}\right)&
\left(\begin{smallmatrix}0&1&0&1\\1&1&1&0\\0&1&0&0\end{smallmatrix}\right)&
\left(\begin{smallmatrix}1&0&1&1\\1&0&1&0\\0&1&1&0\end{smallmatrix}\right)&
\left(\begin{smallmatrix}1&1&0&1\\1&0&0&0\\1&0&1&1\end{smallmatrix}\right)&
\left(\begin{smallmatrix}1&1&0&0\\1&0&1&1\\1&1&1&0\end{smallmatrix}\right)&
\left(\begin{smallmatrix}0&1&1&0\\1&1&1&1\\1&1&0&1\end{smallmatrix}\right)&
\left(\begin{smallmatrix}1&1&1&1\\1&0&0&1\\0&1&1&1\end{smallmatrix}\right)\Big\}\\[+4mm]
    \end{array}$
\end{table}

\begin{table}
    \caption{Nonlinear binary $3\times 4$ MRD codes of minimum rank distance $3$, part 2}
    \label{tbl:mrd_nonlin_2}
    \centering
\vspace{2mm}
    $\begin{array}{r@{,}c@{,}c@{,}c@{,}c@{,}c@{,}c@{,}l}
%11
\Big\{\left(\begin{smallmatrix}0&0&0&0\\0&0&0&0\\0&0&0&0\end{smallmatrix}\right)&
\left(\begin{smallmatrix}1&1&0&0\\0&0&0&1\\1&0&0&0\end{smallmatrix}\right)&
\left(\begin{smallmatrix}1&0&1&0\\0&1&0&0\\0&1&1&0\end{smallmatrix}\right)&
\left(\begin{smallmatrix}0&0&0&1\\1&0&1&1\\0&1&0&0\end{smallmatrix}\right)&
\left(\begin{smallmatrix}0&0&1&1\\0&1&1&0\\0&0&0&1\end{smallmatrix}\right)&
\left(\begin{smallmatrix}1&0&0&0\\0&0&1&1\\1&0&0&1\end{smallmatrix}\right)&
\left(\begin{smallmatrix}0&1&1&1\\1&0&0&0\\1&1&0&0\end{smallmatrix}\right)&
\left(\begin{smallmatrix}1&1&1&0\\0&0&1&0\\0&0&1&1\end{smallmatrix}\right),\\[+2mm]
\left(\begin{smallmatrix}0&0&1&0\\0&1&0&1\\1&1&0&1\end{smallmatrix}\right)&
\left(\begin{smallmatrix}1&1&0&1\\1&1&0&0\\1&0&1&0\end{smallmatrix}\right)&
\left(\begin{smallmatrix}0&1&1&0\\1&1&1&0\\0&1&0&1\end{smallmatrix}\right)&
\left(\begin{smallmatrix}0&1&0&0\\1&1&0&1\\1&1&1&0\end{smallmatrix}\right)&
\left(\begin{smallmatrix}1&0&1&1\\1&1&1&1\\0&0&1&0\end{smallmatrix}\right)&
\left(\begin{smallmatrix}1&0&0&1\\1&0&1&0\\1&1&1&1\end{smallmatrix}\right)&
\left(\begin{smallmatrix}0&1&0&1\\0&1&1&1\\1&0&1&1\end{smallmatrix}\right)&
\left(\begin{smallmatrix}1&1&1&1\\1&0&0&1\\0&1&1&1\end{smallmatrix}\right)\Big\}\\[+4mm]
%12
\Big\{\left(\begin{smallmatrix}0&0&0&0\\0&0&0&0\\0&0&0&0\end{smallmatrix}\right)&
\left(\begin{smallmatrix}1&1&1&0\\1&0&0&0\\0&0&0&1\end{smallmatrix}\right)&
\left(\begin{smallmatrix}0&1&1&0\\1&1&0&0\\0&1&0&0\end{smallmatrix}\right)&
\left(\begin{smallmatrix}1&0&0&1\\0&0&1&0\\0&0&1&1\end{smallmatrix}\right)&
\left(\begin{smallmatrix}0&1&0&0\\1&0&0&1\\1&1&0&0\end{smallmatrix}\right)&
\left(\begin{smallmatrix}0&1&0&1\\0&0&0&1\\1&1&1&0\end{smallmatrix}\right)&
\left(\begin{smallmatrix}0&1&1&1\\0&1&0&0\\0&1&1&0\end{smallmatrix}\right)&
\left(\begin{smallmatrix}0&0&1&0\\1&1&1&0\\0&1&0&1\end{smallmatrix}\right),\\[+2mm]
\left(\begin{smallmatrix}1&1&0&0\\1&0&1&1\\1&0&0&0\end{smallmatrix}\right)&
\left(\begin{smallmatrix}0&0&0&1\\0&0&1&1\\1&1&1&1\end{smallmatrix}\right)&
\left(\begin{smallmatrix}0&0&1&1\\1&1&0&1\\1&0&1&0\end{smallmatrix}\right)&
\left(\begin{smallmatrix}1&1&1&1\\0&1&1&0\\0&0&1&0\end{smallmatrix}\right)&
\left(\begin{smallmatrix}1&0&0&0\\0&1&1&1\\1&1&0&1\end{smallmatrix}\right)&
\left(\begin{smallmatrix}1&0&1&0\\1&1&1&1\\1&0&0&1\end{smallmatrix}\right)&
\left(\begin{smallmatrix}1&1&0&1\\0&1&0&1\\1&0&1&1\end{smallmatrix}\right)&
\left(\begin{smallmatrix}1&0&1&1\\1&0&1&0\\0&1&1&1\end{smallmatrix}\right)\Big\}\\[+4mm]
%13
\Big\{\left(\begin{smallmatrix}0&0&0&0\\0&0&0&0\\0&0&0&0\end{smallmatrix}\right)&
\left(\begin{smallmatrix}1&0&0&1\\0&1&0&0\\0&0&1&0\end{smallmatrix}\right)&
\left(\begin{smallmatrix}0&0&1&0\\1&0&0&0\\0&0&1&1\end{smallmatrix}\right)&
\left(\begin{smallmatrix}1&0&0&0\\1&1&1&0\\0&0&0&1\end{smallmatrix}\right)&
\left(\begin{smallmatrix}0&1&1&0\\1&0&1&0\\0&1&0&0\end{smallmatrix}\right)&
\left(\begin{smallmatrix}0&1&0&1\\0&0&0&1\\1&0&0&1\end{smallmatrix}\right)&
\left(\begin{smallmatrix}0&0&0&1\\0&0&1&0\\1&1&1&1\end{smallmatrix}\right)&
\left(\begin{smallmatrix}0&1&0&0\\0&1&0&1\\0&1&1&1\end{smallmatrix}\right),\\[+2mm]
\left(\begin{smallmatrix}1&1&0&0\\0&0&1&1\\1&0&1&0\end{smallmatrix}\right)&
\left(\begin{smallmatrix}1&0&1&1\\1&1&0&0\\0&1&1&0\end{smallmatrix}\right)&
\left(\begin{smallmatrix}0&0&1&1\\1&0&1&1\\1&1&0&0\end{smallmatrix}\right)&
\left(\begin{smallmatrix}1&1&0&1\\0&1&1&0\\0&1&0&1\end{smallmatrix}\right)&
\left(\begin{smallmatrix}0&1&1&1\\1&0&0&1\\1&1&0&1\end{smallmatrix}\right)&
\left(\begin{smallmatrix}1&1&1&1\\1&1&0&1\\1&0&0&0\end{smallmatrix}\right)&
\left(\begin{smallmatrix}1&1&1&0\\0&1&1&1\\1&0&1&1\end{smallmatrix}\right)&
\left(\begin{smallmatrix}1&0&1&0\\1&1&1&1\\1&1&1&0\end{smallmatrix}\right)\Big\}\\[+4mm]
%14
\Big\{\left(\begin{smallmatrix}0&0&0&0\\0&0&0&0\\0&0&0&0\end{smallmatrix}\right)&
\left(\begin{smallmatrix}0&0&1&0\\0&0&0&1\\1&0&0&0\end{smallmatrix}\right)&
\left(\begin{smallmatrix}0&1&0&0\\0&0&1&0\\0&0&0&1\end{smallmatrix}\right)&
\left(\begin{smallmatrix}1&0&0&1\\0&1&0&0\\0&0&1&0\end{smallmatrix}\right)&
\left(\begin{smallmatrix}1&1&0&0\\0&1&1&0\\0&1&0&0\end{smallmatrix}\right)&
\left(\begin{smallmatrix}1&0&0&0\\1&0&0&1\\0&1&1&1\end{smallmatrix}\right)&
\left(\begin{smallmatrix}0&1&1&0\\1&0&1&0\\0&1&0&1\end{smallmatrix}\right)&
\left(\begin{smallmatrix}1&1&0&1\\1&0&0&0\\1&0&0&1\end{smallmatrix}\right),\\[+2mm]
\left(\begin{smallmatrix}0&1&0&1\\1&1&1&0\\0&0&1&1\end{smallmatrix}\right)&
\left(\begin{smallmatrix}0&0&1&1\\1&1&0&1\\1&0&1&0\end{smallmatrix}\right)&
\left(\begin{smallmatrix}1&0&1&0\\0&1&0&1\\1&1&0&1\end{smallmatrix}\right)&
\left(\begin{smallmatrix}0&0&0&1\\0&0&1&1\\1&1&1&1\end{smallmatrix}\right)&
\left(\begin{smallmatrix}1&0&1&1\\1&1&0&0\\0&1&1&0\end{smallmatrix}\right)&
\left(\begin{smallmatrix}1&1&1&0\\0&1&1&1\\1&1&0&0\end{smallmatrix}\right)&
\left(\begin{smallmatrix}1&1&1&1\\1&0&1&1\\1&1&1&0\end{smallmatrix}\right)&
\left(\begin{smallmatrix}0&1&1&1\\1&1&1&1\\1&0&1&1\end{smallmatrix}\right)\Big\}\\[+4mm]
%15
\Big\{\left(\begin{smallmatrix}0&0&0&0\\0&0&0&0\\0&0&0&0\end{smallmatrix}\right)&
\left(\begin{smallmatrix}0&0&0&1\\1&0&0&0\\0&1&0&1\end{smallmatrix}\right)&
\left(\begin{smallmatrix}0&1&0&0\\0&1&1&0\\1&0&1&0\end{smallmatrix}\right)&
\left(\begin{smallmatrix}1&0&0&1\\0&0&1&0\\0&1&1&0\end{smallmatrix}\right)&
\left(\begin{smallmatrix}0&1&1&1\\0&1&0&0\\0&0&0&1\end{smallmatrix}\right)&
\left(\begin{smallmatrix}0&0&1&1\\1&1&0&0\\0&1&0&0\end{smallmatrix}\right)&
\left(\begin{smallmatrix}1&0&1&0\\1&0&1&1\\0&0&1&0\end{smallmatrix}\right)&
\left(\begin{smallmatrix}0&0&1&0\\1&1&1&1\\1&0&0&0\end{smallmatrix}\right),\\[+2mm]
\left(\begin{smallmatrix}1&1&0&0\\1&0&1&0\\1&0&0&1\end{smallmatrix}\right)&
\left(\begin{smallmatrix}0&1&0&1\\0&0&0&1\\1&1&1&0\end{smallmatrix}\right)&
\left(\begin{smallmatrix}1&1&1&0\\1&0&0&1\\0&0&1&1\end{smallmatrix}\right)&
\left(\begin{smallmatrix}1&0&0&0\\1&1&0&1\\1&0&1&1\end{smallmatrix}\right)&
\left(\begin{smallmatrix}1&0&1&1\\0&1&0&1\\0&1&1&1\end{smallmatrix}\right)&
\left(\begin{smallmatrix}1&1&0&1\\0&1&1&1\\1&1&0&0\end{smallmatrix}\right)&
\left(\begin{smallmatrix}0&1&1&0\\1&1&1&0\\1&1&1&1\end{smallmatrix}\right)&
\left(\begin{smallmatrix}1&1&1&1\\0&0&1&1\\1&1&0&1\end{smallmatrix}\right)\Big\}\\[+4mm]
%16
\Big\{\left(\begin{smallmatrix}0&0&0&0\\0&0&0&0\\0&0&0&0\end{smallmatrix}\right)&
\left(\begin{smallmatrix}1&0&0&0\\0&1&0&0\\0&0&1&1\end{smallmatrix}\right)&
\left(\begin{smallmatrix}0&0&1&1\\1&0&0&0\\0&0&0&1\end{smallmatrix}\right)&
\left(\begin{smallmatrix}0&0&0&1\\1&1&0&0\\1&0&0&0\end{smallmatrix}\right)&
\left(\begin{smallmatrix}0&1&0&1\\1&0&1&0\\0&0&1&0\end{smallmatrix}\right)&
\left(\begin{smallmatrix}0&0&1&0\\0&1&1&0\\1&1&0&0\end{smallmatrix}\right)&
\left(\begin{smallmatrix}0&1&0&0\\0&0&0&1\\0&1&1&1\end{smallmatrix}\right)&
\left(\begin{smallmatrix}1&1&0&0\\0&0&1&0\\1&0&0&1\end{smallmatrix}\right),\\[+2mm]
\left(\begin{smallmatrix}1&1&1&0\\1&0&0&1\\0&1&0&0\end{smallmatrix}\right)&
\left(\begin{smallmatrix}0&1&1&0\\0&1&0&1\\1&0&1&0\end{smallmatrix}\right)&
\left(\begin{smallmatrix}1&0&1&1\\0&0&1&1\\0&1&1&0\end{smallmatrix}\right)&
\left(\begin{smallmatrix}1&0&1&0\\1&0&1&1\\0&1&0&1\end{smallmatrix}\right)&
\left(\begin{smallmatrix}1&0&0&1\\1&1&1&1\\1&1&0&1\end{smallmatrix}\right)&
\left(\begin{smallmatrix}1&1&0&1\\0&1&1&1\\1&0&1&1\end{smallmatrix}\right)&
\left(\begin{smallmatrix}1&1&1&1\\1&1&0&1\\1&1&1&0\end{smallmatrix}\right)&
\left(\begin{smallmatrix}0&1&1&1\\1&1&1&0\\1&1&1&1\end{smallmatrix}\right)\Big\}\\[+4mm]
%17
\Big\{\left(\begin{smallmatrix}0&0&0&0\\0&0&0&0\\0&0&0&0\end{smallmatrix}\right)&
\left(\begin{smallmatrix}0&1&0&0\\0&0&1&0\\0&0&0&1\end{smallmatrix}\right)&
\left(\begin{smallmatrix}0&1&0&1\\0&1&1&0\\1&0&0&0\end{smallmatrix}\right)&
\left(\begin{smallmatrix}1&0&1&0\\1&0&0&0\\0&0&1&1\end{smallmatrix}\right)&
\left(\begin{smallmatrix}0&0&1&1\\0&1&0&0\\1&0&0&1\end{smallmatrix}\right)&
\left(\begin{smallmatrix}0&0&1&0\\1&1&0&1\\1&1&0&0\end{smallmatrix}\right)&
\left(\begin{smallmatrix}1&1&1&0\\1&0&1&0\\0&0&1&0\end{smallmatrix}\right)&
\left(\begin{smallmatrix}1&0&1&1\\0&0&1&1\\0&1&0&0\end{smallmatrix}\right),\\[+2mm]
\left(\begin{smallmatrix}1&0&0&1\\1&1&0&0\\1&0&1&0\end{smallmatrix}\right)&
\left(\begin{smallmatrix}1&1&0&1\\0&0&0&1\\0&1&0&1\end{smallmatrix}\right)&
\left(\begin{smallmatrix}1&0&0&0\\0&1&0&1\\1&1&1&1\end{smallmatrix}\right)&
\left(\begin{smallmatrix}0&0&0&1\\1&0&1&1\\0&1&1&1\end{smallmatrix}\right)&
\left(\begin{smallmatrix}0&1&1&1\\1&0&0&1\\0&1&1&0\end{smallmatrix}\right)&
\left(\begin{smallmatrix}1&1&0&0\\0&1&1&1\\1&1&1&0\end{smallmatrix}\right)&
\left(\begin{smallmatrix}0&1&1&0\\1&1&1&1\\1&1&0&1\end{smallmatrix}\right)&
\left(\begin{smallmatrix}1&1&1&1\\1&1&1&0\\1&0&1&1\end{smallmatrix}\right)\Big\}\\[+4mm]
%18
\Big\{\left(\begin{smallmatrix}0&0&0&0\\0&0&0&0\\0&0&0&0\end{smallmatrix}\right)&
\left(\begin{smallmatrix}0&1&0&0\\1&0&0&0\\0&0&0&1\end{smallmatrix}\right)&
\left(\begin{smallmatrix}1&0&0&0\\0&1&1&0\\0&0&1&0\end{smallmatrix}\right)&
\left(\begin{smallmatrix}0&0&0&1\\0&1&0&1\\1&0&1&0\end{smallmatrix}\right)&
\left(\begin{smallmatrix}0&0&1&0\\0&0&1&1\\0&1&0&1\end{smallmatrix}\right)&
\left(\begin{smallmatrix}1&1&0&0\\0&1&0&0\\0&0&1&1\end{smallmatrix}\right)&
\left(\begin{smallmatrix}1&0&1&1\\0&0&1&0\\1&0&0&0\end{smallmatrix}\right)&
\left(\begin{smallmatrix}0&0&1&1\\0&0&0&1\\1&1&0&1\end{smallmatrix}\right),\\[+2mm]
\left(\begin{smallmatrix}0&1&1&0\\1&0&0&1\\0&1&1&1\end{smallmatrix}\right)&
\left(\begin{smallmatrix}1&0&0&1\\1&1&1&1\\0&1&0&0\end{smallmatrix}\right)&
\left(\begin{smallmatrix}0&1&0&1\\1&1&1&0\\1&1&0&0\end{smallmatrix}\right)&
\left(\begin{smallmatrix}1&1&1&1\\1&1&0&0\\1&0&0&1\end{smallmatrix}\right)&
\left(\begin{smallmatrix}1&1&1&0\\1&0&1&1\\0&1&1&0\end{smallmatrix}\right)&
\left(\begin{smallmatrix}0&1&1&1\\1&0&1&0\\1&0&1&1\end{smallmatrix}\right)&
\left(\begin{smallmatrix}1&0&1&0\\1&1&0&1\\1&1&1&0\end{smallmatrix}\right)&
\left(\begin{smallmatrix}1&1&0&1\\0&1&1&1\\1&1&1&1\end{smallmatrix}\right)\Big\}\\[+4mm]
%19
\Big\{\left(\begin{smallmatrix}0&0&0&0\\0&0&0&0\\0&0&0&0\end{smallmatrix}\right)&
\left(\begin{smallmatrix}0&1&0&0\\0&0&1&0\\0&0&0&1\end{smallmatrix}\right)&
\left(\begin{smallmatrix}1&0&0&0\\0&0&0&1\\0&1&1&1\end{smallmatrix}\right)&
\left(\begin{smallmatrix}0&0&1&0\\0&1&1&0\\1&0&0&1\end{smallmatrix}\right)&
\left(\begin{smallmatrix}1&0&0&1\\0&1&0&1\\1&0&0&0\end{smallmatrix}\right)&
\left(\begin{smallmatrix}0&1&1&0\\0&1&0&0\\1&1&1&0\end{smallmatrix}\right)&
\left(\begin{smallmatrix}0&1&0&1\\1&1&1&0\\0&0&1&0\end{smallmatrix}\right)&
\left(\begin{smallmatrix}0&0&0&1\\1&0&0&1\\1&0&1&1\end{smallmatrix}\right),\\[+2mm]
\left(\begin{smallmatrix}1&0&1&1\\1&0&0&0\\0&1&1&0\end{smallmatrix}\right)&
\left(\begin{smallmatrix}1&1&0&1\\0&0&1&1\\0&1&0&0\end{smallmatrix}\right)&
\left(\begin{smallmatrix}1&0&1&0\\0&1&1&1\\1&1&0&0\end{smallmatrix}\right)&
\left(\begin{smallmatrix}0&0&1&1\\1&1&0&0\\1&1&0&1\end{smallmatrix}\right)&
\left(\begin{smallmatrix}0&1&1&1\\1&1&0&1\\0&1&0&1\end{smallmatrix}\right)&
\left(\begin{smallmatrix}1&1&1&1\\1&0&1&0\\0&0&1&1\end{smallmatrix}\right)&
\left(\begin{smallmatrix}1&1&1&0\\1&1&1&1\\1&0&1&0\end{smallmatrix}\right)&
\left(\begin{smallmatrix}1&1&0&0\\1&0&1&1\\1&1&1&1\end{smallmatrix}\right)\Big\}\\[+4mm]
%20
\Big\{\left(\begin{smallmatrix}0&0&0&0\\0&0&0&0\\0&0&0&0\end{smallmatrix}\right)&
\left(\begin{smallmatrix}1&0&0&0\\0&0&1&0\\0&0&1&1\end{smallmatrix}\right)&
\left(\begin{smallmatrix}0&0&1&1\\0&1&0&0\\0&0&1&0\end{smallmatrix}\right)&
\left(\begin{smallmatrix}0&1&0&0\\1&1&0&0\\0&1&1&0\end{smallmatrix}\right)&
\left(\begin{smallmatrix}0&1&0&1\\1&0&1&0\\1&0&0&0\end{smallmatrix}\right)&
\left(\begin{smallmatrix}1&1&1&0\\0&1&0&1\\0&0&0&1\end{smallmatrix}\right)&
\left(\begin{smallmatrix}1&1&0&0\\1&0&0&1\\1&0&1&0\end{smallmatrix}\right)&
\left(\begin{smallmatrix}0&0&1&0\\1&1&1&0\\0&1&1&1\end{smallmatrix}\right),\\[+2mm]
\left(\begin{smallmatrix}0&0&0&1\\0&1&1&1\\1&0&1&1\end{smallmatrix}\right)&
\left(\begin{smallmatrix}1&0&1&0\\0&0&1&1\\1&1&0&1\end{smallmatrix}\right)&
\left(\begin{smallmatrix}0&1&1&1\\0&1&1&0\\1&0&0&1\end{smallmatrix}\right)&
\left(\begin{smallmatrix}1&1&1&1\\0&0&0&1\\1&1&0&0\end{smallmatrix}\right)&
\left(\begin{smallmatrix}1&0&0&1\\1&1&0&1\\0&1&0&1\end{smallmatrix}\right)&
\left(\begin{smallmatrix}0&1&1&0\\1&0&1&1\\1&1&1&0\end{smallmatrix}\right)&
\left(\begin{smallmatrix}1&1&0&1\\1&1&1&1\\0&1&0&0\end{smallmatrix}\right)&
\left(\begin{smallmatrix}1&0&1&1\\1&0&0&0\\1&1&1&1\end{smallmatrix}\right)\Big\}\\[+4mm]
    \end{array}$
\end{table}

\begin{table}
    \caption{Nonlinear binary $3\times 4$ MRD codes of minimum rank distance $3$, part 3}
    \label{tbl:mrd_nonlin_3}
\vspace{2mm}
    \centering
    $\begin{array}{r@{,}c@{,}c@{,}c@{,}c@{,}c@{,}c@{,}l}
%21
\Big\{\left(\begin{smallmatrix}0&0&0&0\\0&0&0&0\\0&0&0&0\end{smallmatrix}\right)&
\left(\begin{smallmatrix}1&1&0&1\\0&0&0&1\\0&0&1&0\end{smallmatrix}\right)&
\left(\begin{smallmatrix}0&0&1&0\\0&1&0&1\\1&0&1&0\end{smallmatrix}\right)&
\left(\begin{smallmatrix}1&0&0&1\\0&1&0&0\\1&1&0&0\end{smallmatrix}\right)&
\left(\begin{smallmatrix}1&0&1&0\\1&0&0&0\\1&0&0&1\end{smallmatrix}\right)&
\left(\begin{smallmatrix}0&0&0&1\\0&1&1&1\\0&0&1&1\end{smallmatrix}\right)&
\left(\begin{smallmatrix}1&1&0&0\\0&0&1&0\\0&1&1&1\end{smallmatrix}\right)&
\left(\begin{smallmatrix}1&1&1&0\\1&0&1&0\\1&0&0&0\end{smallmatrix}\right),\\[+2mm]
\left(\begin{smallmatrix}0&1&0&1\\1&1&1&0\\0&1&0&0\end{smallmatrix}\right)&
\left(\begin{smallmatrix}1&0&0&0\\1&1&0&1\\0&1&1&0\end{smallmatrix}\right)&
\left(\begin{smallmatrix}0&1&1&0\\1&1&1&1\\0&0&0&1\end{smallmatrix}\right)&
\left(\begin{smallmatrix}0&0&1&1\\1&0&0&1\\1&1&1&0\end{smallmatrix}\right)&
\left(\begin{smallmatrix}0&1&0&0\\1&0&1&1\\1&1&0&1\end{smallmatrix}\right)&
\left(\begin{smallmatrix}1&1&1&1\\1&1&0&0\\0&1&0&1\end{smallmatrix}\right)&
\left(\begin{smallmatrix}0&1&1&1\\0&1&1&0\\1&0&1&1\end{smallmatrix}\right)&
\left(\begin{smallmatrix}1&0&1&1\\0&0&1&1\\1&1&1&1\end{smallmatrix}\right)\Big\}\\[+4mm]
%22
\Big\{\left(\begin{smallmatrix}0&0&0&0\\0&0&0&0\\0&0&0&0\end{smallmatrix}\right)&
\left(\begin{smallmatrix}0&0&0&1\\1&0&0&0\\0&1&1&0\end{smallmatrix}\right)&
\left(\begin{smallmatrix}1&0&0&0\\0&0&1&1\\0&0&0&1\end{smallmatrix}\right)&
\left(\begin{smallmatrix}0&1&0&1\\0&0&1&0\\0&0&1&1\end{smallmatrix}\right)&
\left(\begin{smallmatrix}0&1&1&1\\0&1&0&0\\1&0&0&0\end{smallmatrix}\right)&
\left(\begin{smallmatrix}1&1&1&1\\0&0&0&1\\0&0&1&0\end{smallmatrix}\right)&
\left(\begin{smallmatrix}0&1&0&0\\1&0&0&1\\1&1&1&0\end{smallmatrix}\right)&
\left(\begin{smallmatrix}1&0&1&0\\1&0&1&1\\0&1&0&0\end{smallmatrix}\right),\\[+2mm]
\left(\begin{smallmatrix}1&0&0&1\\0&1&0&1\\1&1&0&1\end{smallmatrix}\right)&
\left(\begin{smallmatrix}1&1&1&0\\1&1&0&0\\0&1&0&1\end{smallmatrix}\right)&
\left(\begin{smallmatrix}0&0&1&0\\1&1&0&1\\0&1&1&1\end{smallmatrix}\right)&
\left(\begin{smallmatrix}0&1&1&0\\1&1&1&0\\1&0&1&0\end{smallmatrix}\right)&
\left(\begin{smallmatrix}0&0&1&1\\0&1&1&0\\1&0&1&1\end{smallmatrix}\right)&
\left(\begin{smallmatrix}1&1&0&0\\1&0&1&0\\1&1&1&1\end{smallmatrix}\right)&
\left(\begin{smallmatrix}1&1&0&1\\0&1&1&1\\1&1&0&0\end{smallmatrix}\right)&
\left(\begin{smallmatrix}1&0&1&1\\1&1&1&1\\1&0&0&1\end{smallmatrix}\right)\Big\}\\[+4mm]
%23
\Big\{\left(\begin{smallmatrix}0&0&0&0\\0&0&0&0\\0&0&0&0\end{smallmatrix}\right)&
\left(\begin{smallmatrix}0&1&0&0\\0&0&1&0\\0&0&0&1\end{smallmatrix}\right)&
\left(\begin{smallmatrix}1&0&0&0\\0&1&1&0\\0&1&0&0\end{smallmatrix}\right)&
\left(\begin{smallmatrix}0&0&1&0\\1&0&0&0\\0&1&0&1\end{smallmatrix}\right)&
\left(\begin{smallmatrix}1&0&1&1\\0&1&0&0\\0&0&1&0\end{smallmatrix}\right)&
\left(\begin{smallmatrix}0&0&1&1\\0&1&1&1\\1&0&0&0\end{smallmatrix}\right)&
\left(\begin{smallmatrix}0&0&0&1\\1&0&1&0\\1&1&1&0\end{smallmatrix}\right)&
\left(\begin{smallmatrix}1&0&1&0\\0&0&1&1\\1&0&1&1\end{smallmatrix}\right),\\[+2mm]
\left(\begin{smallmatrix}0&1&1&0\\1&0&1&1\\1&1&0&0\end{smallmatrix}\right)&
\left(\begin{smallmatrix}1&1&1&0\\1&0&0&1\\0&0&1&1\end{smallmatrix}\right)&
\left(\begin{smallmatrix}0&1&0&1\\1&1&0&1\\0&1&1&0\end{smallmatrix}\right)&
\left(\begin{smallmatrix}0&1&1&1\\0&1&0&1\\1&0&0&1\end{smallmatrix}\right)&
\left(\begin{smallmatrix}1&0&0&1\\1&1&0&0\\1&1&0&1\end{smallmatrix}\right)&
\left(\begin{smallmatrix}1&1&0&0\\1&1&1&1\\1&0&1&0\end{smallmatrix}\right)&
\left(\begin{smallmatrix}1&1&0&1\\0&0&0&1\\1&1&1&1\end{smallmatrix}\right)&
\left(\begin{smallmatrix}1&1&1&1\\1&1&1&0\\0&1&1&1\end{smallmatrix}\right)\Big\}\\[+4mm]
%24
\Big\{\left(\begin{smallmatrix}0&0&0&0\\0&0&0&0\\0&0&0&0\end{smallmatrix}\right)&
\left(\begin{smallmatrix}1&0&0&1\\0&0&1&0\\0&0&0&1\end{smallmatrix}\right)&
\left(\begin{smallmatrix}0&0&1&0\\0&1&0&1\\1&0&0&1\end{smallmatrix}\right)&
\left(\begin{smallmatrix}0&1&0&1\\0&0&0&1\\0&0&1&1\end{smallmatrix}\right)&
\left(\begin{smallmatrix}0&0&0&1\\1&0&0&1\\1&1&0&0\end{smallmatrix}\right)&
\left(\begin{smallmatrix}1&1&1&0\\0&0&1&1\\0&0&1&0\end{smallmatrix}\right)&
\left(\begin{smallmatrix}0&1&1&0\\0&1&1&1\\1&0&0&0\end{smallmatrix}\right)&
\left(\begin{smallmatrix}0&1&0&0\\1&1&1&0\\0&1&1&0\end{smallmatrix}\right),\\[+2mm]
\left(\begin{smallmatrix}1&1&0&1\\0&1&0&0\\1&0&1&0\end{smallmatrix}\right)&
\left(\begin{smallmatrix}1&0&0&0\\0&1&1&0\\1&0&1&1\end{smallmatrix}\right)&
\left(\begin{smallmatrix}0&0&1&1\\1&1&0&0\\0&1&1&1\end{smallmatrix}\right)&
\left(\begin{smallmatrix}1&0&1&1\\1&1&0&1\\0&1&0&0\end{smallmatrix}\right)&
\left(\begin{smallmatrix}1&0&1&0\\1&0&0&0\\1&1&1&1\end{smallmatrix}\right)&
\left(\begin{smallmatrix}1&1&0&0\\1&1&1&1\\0&1&0&1\end{smallmatrix}\right)&
\left(\begin{smallmatrix}0&1&1&1\\1&0&1&1\\1&1&0&1\end{smallmatrix}\right)&
\left(\begin{smallmatrix}1&1&1&1\\1&0&1&0\\1&1&1&0\end{smallmatrix}\right)\Big\}\\[+4mm]
%25
\Big\{\left(\begin{smallmatrix}0&0&0&0\\0&0&0&0\\0&0&0&0\end{smallmatrix}\right)&
\left(\begin{smallmatrix}0&1&0&1\\1&0&0&1\\0&1&0&0\end{smallmatrix}\right)&
\left(\begin{smallmatrix}1&0&1&1\\1&0&0&0\\0&0&1&0\end{smallmatrix}\right)&
\left(\begin{smallmatrix}0&0&1&0\\0&1&1&1\\0&0&0&1\end{smallmatrix}\right)&
\left(\begin{smallmatrix}1&1&0&0\\0&0&1&0\\1&0&0&1\end{smallmatrix}\right)&
\left(\begin{smallmatrix}0&0&1&1\\0&1&0&1\\1&1&0&0\end{smallmatrix}\right)&
\left(\begin{smallmatrix}1&1&1&1\\0&1&0&0\\1&0&0&0\end{smallmatrix}\right)&
\left(\begin{smallmatrix}0&1&0&0\\0&1&1&0\\1&1&1&0\end{smallmatrix}\right),\\[+2mm]
\left(\begin{smallmatrix}1&1&1&0\\0&0&0&1\\0&1&1&0\end{smallmatrix}\right)&
\left(\begin{smallmatrix}0&0&0&1\\1&0&1&1\\0&1&0&1\end{smallmatrix}\right)&
\left(\begin{smallmatrix}1&0&0&0\\1&1&1&1\\1&0&1&0\end{smallmatrix}\right)&
\left(\begin{smallmatrix}1&0&1&0\\0&0&1&1\\0&1&1&1\end{smallmatrix}\right)&
\left(\begin{smallmatrix}1&1&0&1\\1&1&0&0\\0&0&1&1\end{smallmatrix}\right)&
\left(\begin{smallmatrix}0&1&1&0\\1&1&1&0\\1&1&0&1\end{smallmatrix}\right)&
\left(\begin{smallmatrix}0&1&1&1\\1&0&1&0\\1&0&1&1\end{smallmatrix}\right)&
\left(\begin{smallmatrix}1&0&0&1\\1&1&0&1\\1&1&1&1\end{smallmatrix}\right)\Big\}\\[+4mm]
%26
\Big\{\left(\begin{smallmatrix}0&0&0&0\\0&0&0&0\\0&0&0&0\end{smallmatrix}\right)&
\left(\begin{smallmatrix}0&1&0&0\\0&0&1&0\\0&0&0&1\end{smallmatrix}\right)&
\left(\begin{smallmatrix}1&1&0&1\\0&1&0&0\\0&0&1&0\end{smallmatrix}\right)&
\left(\begin{smallmatrix}1&0&0&0\\1&0&0&1\\1&1&0&0\end{smallmatrix}\right)&
\left(\begin{smallmatrix}1&0&1&0\\0&1&1&0\\0&1&0&0\end{smallmatrix}\right)&
\left(\begin{smallmatrix}1&0&0&1\\0&0&1&1\\1&0&0&0\end{smallmatrix}\right)&
\left(\begin{smallmatrix}0&0&0&1\\0&1&0&1\\1&0&1&0\end{smallmatrix}\right)&
\left(\begin{smallmatrix}0&0&1&0\\1&1&0&1\\0&1&0&1\end{smallmatrix}\right),\\[+2mm]
\left(\begin{smallmatrix}0&1&1&0\\0&0&0&1\\1&0&1&1\end{smallmatrix}\right)&
\left(\begin{smallmatrix}0&0&1&1\\1&0&0&0\\1&1&1&1\end{smallmatrix}\right)&
\left(\begin{smallmatrix}0&1&0&1\\1&0&1&1\\0&1&1&1\end{smallmatrix}\right)&
\left(\begin{smallmatrix}1&1&0&0\\1&1&1&1\\0&1&1&0\end{smallmatrix}\right)&
\left(\begin{smallmatrix}1&1&1&0\\1&1&0&0\\1&1&0&1\end{smallmatrix}\right)&
\left(\begin{smallmatrix}1&0&1&1\\1&1&1&0\\0&0&1&1\end{smallmatrix}\right)&
\left(\begin{smallmatrix}0&1&1&1\\1&0&1&0\\1&1&1&0\end{smallmatrix}\right)&
\left(\begin{smallmatrix}1&1&1&1\\0&1&1&1\\1&0&0&1\end{smallmatrix}\right)\Big\}\\[+4mm]
%27
\Big\{\left(\begin{smallmatrix}0&0&0&0\\0&0&0&0\\0&0&0&0\end{smallmatrix}\right)&
\left(\begin{smallmatrix}0&0&1&0\\0&1&0&0\\0&0&0&1\end{smallmatrix}\right)&
\left(\begin{smallmatrix}0&1&1&0\\1&0&0&0\\0&0&1&0\end{smallmatrix}\right)&
\left(\begin{smallmatrix}0&1&0&1\\1&1&0&0\\0&0&1&1\end{smallmatrix}\right)&
\left(\begin{smallmatrix}1&0&0&0\\1&1&1&1\\0&1&0&0\end{smallmatrix}\right)&
\left(\begin{smallmatrix}0&1&1&1\\1&0&1&0\\1&0&0&0\end{smallmatrix}\right)&
\left(\begin{smallmatrix}0&0&0&1\\1&0&1&1\\1&1&0&0\end{smallmatrix}\right)&
\left(\begin{smallmatrix}1&0&0&1\\0&0&0&1\\0&1&1&1\end{smallmatrix}\right),\\[+2mm]
\left(\begin{smallmatrix}1&1&0&1\\0&0&1&0\\0&1&1&0\end{smallmatrix}\right)&
\left(\begin{smallmatrix}1&0&1&1\\1&0&0&1\\1&0&1&0\end{smallmatrix}\right)&
\left(\begin{smallmatrix}1&1&1&0\\0&1&0&1\\1&0&0&1\end{smallmatrix}\right)&
\left(\begin{smallmatrix}0&1&0&0\\0&1&1&1\\1&1&1&0\end{smallmatrix}\right)&
\left(\begin{smallmatrix}1&1&0&0\\1&1&0&1\\0&1&0&1\end{smallmatrix}\right)&
\left(\begin{smallmatrix}0&0&1&1\\0&1&1&0\\1&0&1&1\end{smallmatrix}\right)&
\left(\begin{smallmatrix}1&0&1&0\\1&1&1&0\\1&1&1&1\end{smallmatrix}\right)&
\left(\begin{smallmatrix}1&1&1&1\\0&0&1&1\\1&1&0&1\end{smallmatrix}\right)\Big\}\\[+4mm]
%28
\Big\{\left(\begin{smallmatrix}0&0&0&0\\0&0&0&0\\0&0&0&0\end{smallmatrix}\right)&
\left(\begin{smallmatrix}0&0&1&1\\0&1&0&1\\1&0&0&0\end{smallmatrix}\right)&
\left(\begin{smallmatrix}0&1&0&0\\0&1&1&0\\1&0&1&0\end{smallmatrix}\right)&
\left(\begin{smallmatrix}0&1&1&0\\0&0&1&0\\0&0&1&1\end{smallmatrix}\right)&
\left(\begin{smallmatrix}1&0&0&1\\1&1&1&0\\0&1&0&0\end{smallmatrix}\right)&
\left(\begin{smallmatrix}1&0&1&0\\0&1&0&0\\1&0&1&1\end{smallmatrix}\right)&
\left(\begin{smallmatrix}0&0&0&1\\1&1&0&1\\0&1&1&0\end{smallmatrix}\right)&
\left(\begin{smallmatrix}1&0&1&1\\0&0&1&1\\0&0&0&1\end{smallmatrix}\right),\\[+2mm]
\left(\begin{smallmatrix}1&0&0&0\\1&0&1&1\\1&1&0&0\end{smallmatrix}\right)&
\left(\begin{smallmatrix}0&0&1&0\\1&0&0&1\\1&1&0&1\end{smallmatrix}\right)&
\left(\begin{smallmatrix}1&1&1&0\\0&1&1&1\\0&0&1&0\end{smallmatrix}\right)&
\left(\begin{smallmatrix}0&1&1&1\\1&0&0&0\\1&1&1&0\end{smallmatrix}\right)&
\left(\begin{smallmatrix}1&1&0&1\\1&1&0&0\\0&1&0&1\end{smallmatrix}\right)&
\left(\begin{smallmatrix}1&1&0&0\\0&0&0&1\\1&1&1&1\end{smallmatrix}\right)&
\left(\begin{smallmatrix}1&1&1&1\\1&0&1&0\\1&0&0&1\end{smallmatrix}\right)&
\left(\begin{smallmatrix}0&1&0&1\\1&1&1&1\\0&1&1&1\end{smallmatrix}\right)\Big\}\\[+4mm]
%29
\Big\{\left(\begin{smallmatrix}0&0&0&0\\0&0&0&0\\0&0&0&0\end{smallmatrix}\right)&
\left(\begin{smallmatrix}0&0&1&1\\0&0&0&1\\0&1&0&0\end{smallmatrix}\right)&
\left(\begin{smallmatrix}0&1&0&0\\0&0&1&0\\1&1&0&0\end{smallmatrix}\right)&
\left(\begin{smallmatrix}1&0&0&0\\0&0&1&1\\1&0&1&0\end{smallmatrix}\right)&
\left(\begin{smallmatrix}1&1&0&0\\0&1&0&0\\0&1&0&1\end{smallmatrix}\right)&
\left(\begin{smallmatrix}0&0&0&1\\1&1&0&1\\0&0&1&0\end{smallmatrix}\right)&
\left(\begin{smallmatrix}0&1&1&1\\1&0&0&1\\0&0&0&1\end{smallmatrix}\right)&
\left(\begin{smallmatrix}0&0&1&0\\0&1&1&0\\1&1&0&1\end{smallmatrix}\right),\\[+2mm]
\left(\begin{smallmatrix}0&1&1&0\\1&0&1&1\\1&0&0&0\end{smallmatrix}\right)&
\left(\begin{smallmatrix}1&0&1&0\\0&1&0&1\\0&1&1&1\end{smallmatrix}\right)&
\left(\begin{smallmatrix}1&1&0&1\\1&1&0&0\\0&1&1&0\end{smallmatrix}\right)&
\left(\begin{smallmatrix}1&0&0&1\\1&1&1&0\\0&0&1&1\end{smallmatrix}\right)&
\left(\begin{smallmatrix}0&1&0&1\\1&1&1&1\\1&0&0&1\end{smallmatrix}\right)&
\left(\begin{smallmatrix}1&0&1&1\\1&0&0&0\\1&1&1&1\end{smallmatrix}\right)&
\left(\begin{smallmatrix}1&1&1&1\\1&0&1&0\\1&1&1&0\end{smallmatrix}\right)&
\left(\begin{smallmatrix}1&1&1&0\\0&1&1&1\\1&0&1&1\end{smallmatrix}\right)\Big\}\\[+4mm]
%30
\Big\{\left(\begin{smallmatrix}0&0&0&0\\0&0&0&0\\0&0&0&0\end{smallmatrix}\right)&
\left(\begin{smallmatrix}1&0&1&0\\0&1&1&0\\0&1&0&0\end{smallmatrix}\right)&
\left(\begin{smallmatrix}0&1&1&0\\1&0&0&1\\0&0&1&0\end{smallmatrix}\right)&
\left(\begin{smallmatrix}1&1&0&0\\0&0&0&1\\1&0&0&1\end{smallmatrix}\right)&
\left(\begin{smallmatrix}1&0&0&0\\1&0&1&1\\0&0&0&1\end{smallmatrix}\right)&
\left(\begin{smallmatrix}0&0&0&1\\1&0&0&0\\0&1&1&1\end{smallmatrix}\right)&
\left(\begin{smallmatrix}0&1&0&0\\0&0&1&1\\1&1&1&0\end{smallmatrix}\right)&
\left(\begin{smallmatrix}0&1&1&1\\0&0&1&0\\0&1&1&0\end{smallmatrix}\right),\\[+2mm]
\left(\begin{smallmatrix}0&0&1&1\\0&1&0&1\\1&1&0&0\end{smallmatrix}\right)&
\left(\begin{smallmatrix}1&1&1&0\\0&1&0&0\\0&1&0&1\end{smallmatrix}\right)&
\left(\begin{smallmatrix}0&0&1&0\\1&0&1&0\\1&1&1&1\end{smallmatrix}\right)&
\left(\begin{smallmatrix}1&1&0&1\\1&1&0&0\\1&0&1&0\end{smallmatrix}\right)&
\left(\begin{smallmatrix}1&0&0&1\\0&1&1&1\\1&0&1&1\end{smallmatrix}\right)&
\left(\begin{smallmatrix}1&0&1&1\\1&1&1&1\\1&0&0&0\end{smallmatrix}\right)&
\left(\begin{smallmatrix}0&1&0&1\\1&1&1&0\\1&1&0&1\end{smallmatrix}\right)&
\left(\begin{smallmatrix}1&1&1&1\\1&1&0&1\\0&0&1&1\end{smallmatrix}\right)\Big\}\\[+4mm]
    \end{array}$
\end{table}

\section{Subspace codes}
\label{sect:codes}
We have $A_2(7,6) = 17$ and $A_2(7,5) = 34$ \cite{Honold-Kiermaier-Kurz-2016-AiMoC10[3]:649-682}.
Based on the classification results of Sections~\ref{sect:ps17} and~\ref{sect:ps16} we are now ready to classify the corresponding optimal $(7,17,6)_2$ and $(7,34,5)_2$ subspace codes.
This completes the work started in~\cite{Honold-Kiermaier-Kurz-2016-AiMoC10[3]:649-682}, where their numbers have already been announced.

Up to dualization the possible dimension distributions for a $(7,17,6)_2$ subspace code are $(3^{17})$, $(3^{16} 4^1)$, and $(3^{16} 5^1)$~\cite[Theorem~3.2(ii)]{Honold-Kiermaier-Kurz-2016-AiMoC10[3]:649-682}.
The only possible dimension distribution for a $(7,34,5)_2$ subspace code is $(3^{17} 4^{17})$~\cite[Remark~4]{Honold-Kiermaier-Kurz-2016-AiMoC10[3]:649-682}.

We are going to prove the following theorems:
\begin{theorem}
	\label{thm:7_17_6_2}
	There are $928$ isomorphism types of $(7,17,6)_2$ subspace codes.
	Of these codes, $715$ are constant dimension codes of dimension distribution $(3^{17})$, $37$ have dimension distribution $(3^{16} 4^1)$ and $176$ have dimension distribution $(3^{16} 5^1)$.
\end{theorem}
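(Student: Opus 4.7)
The plan is to partition the classification by dimension distribution, which by~\cite{Honold-Kiermaier-Kurz-2016-AiMoC10[3]:649-682} is (up to dualization) one of $(3^{17})$, $(3^{16}4^1)$, and $(3^{16}5^1)$, and to handle each case separately. The constant-dimension case $(3^{17})$ is immediate: such a code is exactly a maximum partial plane spread in $\PG(6,2)$, so Theorem~\ref{thm:size17} directly contributes $715$ isomorphism types.

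For $(3^{16}4^1)$, I write $\mathcal{C}=\mathcal{S}\cup\{W\}$ with $\#\mathcal{S}=16$ and $\dim W=4$. The condition $\dsubsp\geq 6$ forces every pair of codewords to intersect trivially, so $\mathcal{S}$ is a partial plane spread of size $16$ and $W$ is disjoint from every block. Hence all $15$ points of $W$ are holes; since $\mathcal{S}$ has exactly $15$ holes, $W = N = \langle N\rangle$, which places us in the $n=4$ subcase of Theorem~\ref{thm:size16}~(a). Since $W$ is uniquely determined by $\mathcal{S}$ in this case, the count is exactly the $37$ coming from Theorem~\ref{thm:size16}~(a)(i).

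For $(3^{16}5^1)$, the same reasoning makes $\mathcal{S}$ a partial plane spread of size $16$, while $\dsubsp(U,W)\geq 6$ only yields $\dim(U\cap W)\leq 1$. However, since $\dim U + \dim W = 8 > 7$, the dimension formula forces $\dim(U\cap W) = 1$ for every block $U$. Thus the $16$ blocks contribute $16$ distinct points to $W$, leaving $31-16=15$ hole points inside $W$, and since $\mathcal{S}$ has only $15$ holes these exhaust $N$. Hence $N\subseteq W$, so $n=\dim\langle N\rangle \leq 5$ and only the subcases $n=4$ and $n=5$ of Theorem~\ref{thm:size16}~(a) can occur. For $n=5$, $W=\langle N\rangle$ is forced, so each of the $69$ spreads from Theorem~\ref{thm:size16}~(a)(ii) yields exactly one code. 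For $n=4$, $W$ can be any of the seven $5$-subspaces containing the $4$-dimensional $\langle N\rangle$, and for each of the $37$ spreads from Theorem~\ref{thm:size16}~(a)(i) I would compute the orbits of $\Aut(\mathcal{S})$ on these seven subspaces and sum the orbit counts. Together the two subcases account for the claimed $176$ types.

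The main obstacle lies entirely in the $n=4$ subcase of the $(3^{16}5^1)$ distribution: carrying it out requires the stabilizers of the $37$ partial plane spreads from Theorem~\ref{thm:size16}~(a)(i) together with their induced action on a set of seven small subspaces. This is conceptually straightforward and routine with the computer-algebra tools described in Section~\ref{sect:pre}; the other cases follow by elementary counting from the structural results of Sections~\ref{sect:ps17} and~\ref{sect:ps16}, which together yield the total of $715+37+176=928$.
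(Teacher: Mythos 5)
Your proposal is correct and follows essentially the same route as the paper: split by dimension distribution using the cited result, identify the $(3^{17})$ and $(3^{16}4^1)$ cases with the classifications of Theorems~\ref{thm:size17} and~\ref{thm:size16}(a)(i), and for $(3^{16}5^1)$ show the $5$-space must contain all $15$ holes, reducing to the $n\in\{4,5\}$ cases with a unique choice for $n=5$ and an orbit count on the seven $5$-flats through $\langle N\rangle$ for $n=4$ (which the paper carries out computationally to obtain $107+69=176$).
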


\begin{theorem}
	\label{thm:7_34_5_2}
	There are $20$ isomorphism types of $(7,34,5)_2$ subspace codes, all of the dimension distribution $(3^{17} 4^{17})$.
	Among the $20$ types of codes, there is a single iso-dual one.
	A representative of this code is given by the row spaces of the matrices in Table~\ref{tbl:7_34_5_2:selfdual}.
\end{theorem}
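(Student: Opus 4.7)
The plan is to reduce the classification to the $715$ isomorphism types of maximum partial plane spreads from Theorem~\ref{thm:size17}. Since every $(7,34,5)_2$ code has dimension distribution $(3^{17}4^{17})$ by \cite[Remark~4]{Honold-Kiermaier-Kurz-2016-AiMoC10[3]:649-682}, such a code decomposes as $\mathcal{C}=\mathcal{P}\sqcup\mathcal{Q}$ with $\#\mathcal{P}=\#\mathcal{Q}=17$, where $\mathcal{P}$ consists of planes and $\mathcal{Q}$ of solids. Distances between same-dimension codewords are even, so $\dsubsp(\mathcal{C})\geq 5$ forces $\dsubsp\geq 6$ within each layer. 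Hence $\mathcal{P}$ is a maximum partial plane spread, and since distinct solids $S_1,S_2$ must satisfy $\dim(S_1\cap S_2)=1$, the dualized set $\{S^\perp:S\in\mathcal{Q}\}$ (with respect to any fixed non-degenerate bilinear form on $V$) is again a maximum partial plane spread, because $S_1^\perp\cap S_2^\perp=(S_1+S_2)^\perp=\{0\}$. The remaining mixed-dimension constraint $\dsubsp(E,S)=7-2\dim(E\cap S)\geq 5$ is equivalent to $\dim(E\cap S)\leq 1$ for all $E\in\mathcal{P}$ and $S\in\mathcal{Q}$.

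With this structural reduction in place, I would loop over the $715$ representatives $\mathcal{P}$ supplied by Theorem~\ref{thm:size17}, compute the stabilizer $G_\mathcal{P}\leq\GL(7,2)$, and determine the $G_\mathcal{P}$-orbits of \emph{admissible} solids, i.e., those meeting every plane of $\mathcal{P}$ in at most a point. On the set of admissible solids I would build a compatibility graph whose edges correspond to pairs of solids intersecting in a line (dimension exactly $1$, not $2$ or $3$), and then invoke cliquer~\cite{Niskanen-Ostergard-2003-cliquer} to enumerate $17$-cliques modulo $G_\mathcal{P}$. Each clique, combined with $\mathcal{P}$, produces one candidate $(7,34,5)_2$ code.

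After merging the candidate lists from all $715$ starting spreads, duplicates are eliminated by computing canonical forms with the algorithm of Feulner~\cite{Feulner-2013-arXiv:1305.1193}, yielding the asserted count of $20$ isomorphism types. For the iso-duality assertion, I would, for each of the $20$ canonical representatives $\mathcal{C}$, compute $\mathcal{C}^\perp=\{C^\perp:C\in\mathcal{C}\}$ and test whether $\mathcal{C}$ and $\mathcal{C}^\perp$ lie in the same $\GL(7,2)$-orbit (recall $\PGammaL(7,2)=\GL(7,2)$ for $q=2$) by comparing their canonical forms. Precisely one class passes this test; its representative is exhibited as Table~\ref{tbl:7_34_5_2:selfdual}.

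The main obstacle will be the combinatorial blow-up of the extension step: individual starting spreads $\mathcal{P}$ may admit a large number of admissible solids, so both the clique enumeration and the subsequent isomorph rejection across the merged candidate list must be organized carefully, exploiting the action of $G_\mathcal{P}$ to prune orbits early and fragmenting the search on a per-orbit basis whenever the raw search graph becomes too large.
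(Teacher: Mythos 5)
Your proposal is essentially the paper's own proof: the same structural reduction (plane layer a maximum partial plane spread, solid layer the dual of one, mixed intersections at most a point), followed by a clique-search extension of each of the $715$ spreads from Theorem~\ref{thm:size17} and isomorph rejection via canonical forms, exactly as the paper does. One bookkeeping point: deduplication under the inner group $\PGammaL(7,2)=\GL(7,2)$ yields $39$ classes, and the count $20$ only emerges after your subsequent duality test pairs up the $19$ dual pairs and identifies the single iso-dual class, so the number $20$ appears at the end of your procedure rather than at the canonical-form stage as stated.
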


\begin{remark}
	According to the definition in Section~\ref{subsect:subspace_codes}, the acting group in Theorems~\ref{thm:7_17_6_2} and~\ref{thm:7_34_5_2} includes dualization.
	The number of inner isomorphism classes can easily be derived though.
	
	The codes in the context of Theorem~\ref{thm:7_17_6_2} are not iso-dual by the asymmetric dimension distribution.
	So the number of inner isomorphism classes is twice the stated number of isomorphism classes.
	For Theorem~\ref{thm:7_34_5_2}, the number of inner isomorphism classes of $(7,34,5)_2$ subspace codes is $1 + 2\cdot (20-1) = 39$, consisting of the single iso-dual type and $19$ dual pairs of types.
\end{remark}

\begin{table}
\caption{The unique iso-dual $(7,34,5)_2$ subspace code}
\label{tbl:7_34_5_2:selfdual}
\vspace{2mm}
\centering$\setlength{\arraycolsep}{2pt}
\begin{array}{ccccc}
\left(\begin{smallmatrix} 
0 & 0 & 0 & 0 & 1 & 0 & 0 \\
0 & 0 & 0 & 0 & 0 & 1 & 0 \\
0 & 0 & 0 & 0 & 0 & 0 & 1 
\end{smallmatrix}\right)\text{,} &

\left(\begin{smallmatrix} 
0 & 1 & 0 & 0 & 0 & 0 & 0 \\
0 & 0 & 1 & 0 & 0 & 0 & 0 \\
0 & 0 & 0 & 1 & 0 & 0 & 0 
\end{smallmatrix}\right)\text{,} &

\left(\begin{smallmatrix} 
0 & 1 & 0 & 0 & 1 & 0 & 0 \\
0 & 0 & 1 & 0 & 0 & 1 & 0 \\
0 & 0 & 0 & 1 & 0 & 0 & 1 
\end{smallmatrix}\right)\text{,} &

\left(\begin{smallmatrix} 
1 & 0 & 0 & 0 & 1 & 1 & 1 \\
0 & 0 & 1 & 0 & 0 & 0 & 1 \\
0 & 0 & 0 & 1 & 1 & 1 & 0 
\end{smallmatrix}\right)\text{,} &

\left(\begin{smallmatrix} 
1 & 0 & 1 & 0 & 1 & 0 & 1 \\
0 & 1 & 1 & 0 & 0 & 1 & 1 \\
0 & 0 & 0 & 1 & 0 & 1 & 0 
\end{smallmatrix}\right)\text{,} \\[1.5ex]

\left(\begin{smallmatrix} 
1 & 0 & 0 & 0 & 0 & 1 & 0 \\
0 & 1 & 0 & 0 & 0 & 1 & 0 \\
0 & 0 & 1 & 0 & 1 & 1 & 0 
\end{smallmatrix}\right)\text{,} &

\left(\begin{smallmatrix} 
1 & 0 & 0 & 0 & 1 & 0 & 1 \\
0 & 1 & 0 & 0 & 1 & 1 & 0 \\
0 & 0 & 1 & 0 & 1 & 0 & 0 
\end{smallmatrix}\right)\text{,} &

\left(\begin{smallmatrix} 
1 & 0 & 0 & 0 & 1 & 1 & 0 \\
0 & 1 & 0 & 0 & 1 & 1 & 1 \\
0 & 0 & 0 & 1 & 1 & 0 & 0 
\end{smallmatrix}\right)\text{,} &

\left(\begin{smallmatrix} 
1 & 0 & 0 & 0 & 0 & 1 & 1 \\
0 & 1 & 0 & 1 & 0 & 0 & 1 \\
0 & 0 & 1 & 1 & 1 & 1 & 0 
\end{smallmatrix}\right)\text{,} &

\left(\begin{smallmatrix} 
1 & 0 & 0 & 0 & 1 & 0 & 0 \\
0 & 1 & 0 & 1 & 1 & 0 & 0 \\
0 & 0 & 1 & 1 & 1 & 0 & 1 
\end{smallmatrix}\right)\text{,} \\[1.5ex]

\left(\begin{smallmatrix} 
1 & 0 & 0 & 1 & 0 & 1 & 1 \\
0 & 1 & 0 & 1 & 1 & 1 & 1 \\
0 & 0 & 1 & 0 & 1 & 0 & 1 
\end{smallmatrix}\right)\text{,} &

\left(\begin{smallmatrix} 
1 & 0 & 1 & 0 & 0 & 0 & 0 \\
0 & 1 & 1 & 0 & 1 & 0 & 1 \\
0 & 0 & 0 & 1 & 0 & 1 & 1 
\end{smallmatrix}\right)\text{,} &

\left(\begin{smallmatrix} 
1 & 0 & 0 & 1 & 1 & 1 & 0 \\
0 & 1 & 0 & 0 & 1 & 0 & 1 \\
0 & 0 & 1 & 1 & 0 & 0 & 1 
\end{smallmatrix}\right)\text{,} &

\left(\begin{smallmatrix} 
1 & 0 & 0 & 1 & 1 & 0 & 1 \\
0 & 1 & 0 & 1 & 0 & 1 & 0 \\
0 & 0 & 1 & 0 & 1 & 1 & 1 
\end{smallmatrix}\right)\text{,} &

\left(\begin{smallmatrix} 
1 & 0 & 0 & 0 & 0 & 0 & 0 \\
0 & 0 & 1 & 0 & 0 & 1 & 1 \\
0 & 0 & 0 & 1 & 1 & 1 & 1 
\end{smallmatrix}\right)\text{,} \\[1.5ex]

\left(\begin{smallmatrix} 
1 & 0 & 0 & 0 & 0 & 0 & 1 \\
0 & 1 & 0 & 0 & 0 & 1 & 1 \\
0 & 0 & 0 & 1 & 1 & 0 & 1 
\end{smallmatrix}\right)\text{,} &

\left(\begin{smallmatrix} 
1 & 0 & 0 & 1 & 0 & 0 & 0 \\
0 & 1 & 0 & 0 & 0 & 0 & 1 \\
0 & 0 & 1 & 1 & 0 & 1 & 0 \\
\end{smallmatrix}\right)\text{,} \\[2ex]

\left(\begin{smallmatrix} 
1 & 0 & 0 & 0 & 0 & 0 & 0 \\
0 & 1 & 0 & 0 & 0 & 1 & 0 \\
0 & 0 & 0 & 1 & 0 & 1 & 1 \\
0 & 0 & 0 & 0 & 1 & 0 & 1 
\end{smallmatrix}\right)\text{,} &

\left(\begin{smallmatrix} 
1 & 0 & 1 & 0 & 1 & 0 & 0 \\
0 & 1 & 1 & 0 & 0 & 0 & 0 \\
0 & 0 & 0 & 1 & 1 & 0 & 1 \\
0 & 0 & 0 & 0 & 0 & 1 & 0 
\end{smallmatrix}\right)\text{,} &

\left(\begin{smallmatrix} 
1 & 0 & 1 & 0 & 0 & 0 & 0 \\
0 & 1 & 0 & 0 & 0 & 1 & 1 \\
0 & 0 & 0 & 1 & 0 & 0 & 1 \\
0 & 0 & 0 & 0 & 1 & 0 & 1 
\end{smallmatrix}\right)\text{,} &

\left(\begin{smallmatrix} 
1 & 0 & 0 & 1 & 0 & 1 & 0 \\
0 & 1 & 0 & 1 & 0 & 1 & 0 \\
0 & 0 & 1 & 0 & 0 & 1 & 0 \\
0 & 0 & 0 & 0 & 1 & 1 & 0 
\end{smallmatrix}\right)\text{,} &

\left(\begin{smallmatrix} 
0 & 1 & 0 & 0 & 0 & 0 & 1 \\
0 & 0 & 1 & 0 & 1 & 0 & 1 \\
0 & 0 & 0 & 1 & 0 & 0 & 0 \\
0 & 0 & 0 & 0 & 0 & 1 & 0 
\end{smallmatrix}\right)\text{,} \\[2ex]

\left(\begin{smallmatrix} 
1 & 1 & 0 & 0 & 0 & 0 & 0 \\
0 & 0 & 1 & 0 & 0 & 0 & 0 \\
0 & 0 & 0 & 1 & 1 & 0 & 1 \\
0 & 0 & 0 & 0 & 0 & 1 & 1 
\end{smallmatrix}\right)\text{,} &

\left(\begin{smallmatrix} 
1 & 0 & 0 & 1 & 0 & 0 & 0 \\
0 & 1 & 0 & 0 & 1 & 0 & 0 \\
0 & 0 & 1 & 0 & 1 & 0 & 0 \\
0 & 0 & 0 & 0 & 0 & 0 & 1 
\end{smallmatrix}\right)\text{,} &

\left(\begin{smallmatrix} 
1 & 0 & 0 & 1 & 0 & 1 & 0 \\
0 & 1 & 0 & 1 & 0 & 0 & 0 \\
0 & 0 & 1 & 1 & 0 & 0 & 1 \\
0 & 0 & 0 & 0 & 1 & 1 & 1 
\end{smallmatrix}\right)\text{,} &

\left(\begin{smallmatrix} 
1 & 0 & 0 & 0 & 0 & 0 & 1 \\
0 & 0 & 1 & 0 & 0 & 0 & 0 \\
0 & 0 & 0 & 1 & 0 & 1 & 0 \\
0 & 0 & 0 & 0 & 1 & 1 & 0 
\end{smallmatrix}\right)\text{,} &

\left(\begin{smallmatrix} 
1 & 0 & 0 & 0 & 0 & 1 & 1 \\
0 & 1 & 0 & 0 & 0 & 0 & 0 \\
0 & 0 & 1 & 1 & 0 & 1 & 1 \\
0 & 0 & 0 & 0 & 1 & 1 & 1 
\end{smallmatrix}\right)\text{,} \\[2ex]

\left(\begin{smallmatrix} 
1 & 0 & 0 & 0 & 0 & 0 & 1 \\
0 & 1 & 0 & 0 & 0 & 0 & 0 \\
0 & 0 & 1 & 0 & 0 & 1 & 1 \\
0 & 0 & 0 & 0 & 1 & 0 & 0 
\end{smallmatrix}\right)\text{,} &

\left(\begin{smallmatrix} 
1 & 0 & 0 & 0 & 0 & 1 & 0 \\
0 & 1 & 0 & 1 & 0 & 0 & 1 \\
0 & 0 & 1 & 1 & 0 & 0 & 0 \\
0 & 0 & 0 & 0 & 1 & 0 & 0 
\end{smallmatrix}\right)\text{,} &

\left(\begin{smallmatrix} 
1 & 0 & 0 & 0 & 1 & 0 & 0 \\
0 & 1 & 0 & 0 & 1 & 0 & 1 \\
0 & 0 & 1 & 0 & 0 & 0 & 1 \\
0 & 0 & 0 & 1 & 0 & 1 & 1 
\end{smallmatrix}\right)\text{,} &

\left(\begin{smallmatrix} 
1 & 0 & 0 & 0 & 1 & 1 & 0 \\
0 & 1 & 0 & 0 & 0 & 0 & 1 \\
0 & 0 & 1 & 0 & 1 & 1 & 0 \\
0 & 0 & 0 & 1 & 1 & 1 & 1 
\end{smallmatrix}\right)\text{,} &

\left(\begin{smallmatrix} 
1 & 0 & 0 & 0 & 0 & 0 & 0 \\
0 & 1 & 0 & 1 & 0 & 0 & 0 \\
0 & 0 & 1 & 0 & 0 & 0 & 1 \\
0 & 0 & 0 & 0 & 0 & 1 & 1 
\end{smallmatrix}\right)\text{,} \\[2ex]

\left(\begin{smallmatrix} 
1 & 0 & 0 & 0 & 1 & 1 & 0 \\
0 & 1 & 1 & 0 & 0 & 1 & 0 \\
0 & 0 & 0 & 1 & 0 & 0 & 0 \\
0 & 0 & 0 & 0 & 0 & 0 & 1 
\end{smallmatrix}\right)\text{,} &

\left(\begin{smallmatrix} 
1 & 0 & 0 & 1 & 0 & 0 & 0 \\
0 & 1 & 0 & 0 & 0 & 1 & 0 \\
0 & 0 & 1 & 1 & 0 & 1 & 1 \\
0 & 0 & 0 & 0 & 1 & 0 & 0
\end{smallmatrix}\right)\phantom{,}
\end{array}
$
\end{table}

\subsection{$(7,17,6)_2$ subspace codes, dimension distribution $(3^{17})$}
These are exactly the maximum partial plane spreads in $\PG(6,2)$, whose number of isomorphism types has been determined as $715$ in Theorem~\ref{thm:size17}.

\subsection{$(7,17,6)_2$ subspace codes, dimension distribution $(3^{16} 4^1)$}
\label{sec:ssc:3_16__4_1}
The $16$ blocks of dimension $3$ form a partial plane spread $\mathcal{S}_{16}$ in $\PG(6,2)$.
The block of dimension $4$ has subspace distance at least $6$ to all the blocks in $\mathcal{S}_{16}$, implying that its intersection with each block in $\mathcal{S}_{16}$ is trivial.
In other words, the block of dimension $4$ is the set $N$ of holes of $\mathcal{S}_{16}$.
This means that the codes with the given dimension distribution are exactly the vector space partitions of $V$ of type $3^{16} 4^1$.

Thus, we have yet another characterization of the $37$ objects counted in Theorem~\ref{thm:size16}\ref{thm:size16:ext:n4} and Theorem~\ref{thm:mrd} as the types of $(7,17,6)_2$ subspace codes of dimension distribution $(3^{16} 4^1)$.

\subsection{$(7,17,6)_2$ subspace codes, dimension distribution $(3^{16} 5^1)$}
\begin{lemma}
	The $(7,17,6)_2$ subspace codes of dimension distribution $(3^{16} 5^1)$ are exactly the sets of the form $\mathcal{S}_{16} \cup \{F\}$, where $\mathcal{S}_{16}$ is a partial plane spread of size $16$ and $F$ is a $5$-space containing all the holes of $\mathcal{S}_{16}$.
\end{lemma}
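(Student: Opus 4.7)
The plan is to verify both directions of the equivalence by unpacking the subspace distance condition and combining it with a straightforward point count inside $F$.

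For the forward direction (``only if''), suppose $\mathcal{C}$ is a $(7,17,6)_2$ subspace code with dimension distribution $(3^{16}\,5^1)$, and write $\mathcal{C}=\mathcal{S}_{16}\cup\{F\}$ where $\mathcal{S}_{16}$ is the set of $3$-dimensional codewords and $F$ is the unique $5$-dimensional one. For any two planes $B_1,B_2\in\mathcal{S}_{16}$, the condition $\dsubsp(B_1,B_2)=2(3-\dim(B_1\cap B_2))\geq 6$ forces $B_1\cap B_2=\{0\}$; hence $\mathcal{S}_{16}$ is a partial plane spread of size $16$. For any plane $B\in\mathcal{S}_{16}$, $\dsubsp(F,B)=8-2\dim(F\cap B)\geq 6$ yields $\dim(F\cap B)\leq 1$, while the dimension formula together with $\dim(F+B)\leq 7$ gives $\dim(F\cap B)\geq 1$. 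Thus $F\cap B$ is a single projective point.

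The key counting step follows: since the blocks of $\mathcal{S}_{16}$ are pairwise disjoint, the $16$ intersection points $F\cap B_i$ are distinct, so the blocks cover exactly $16$ of the $\#F=2^5-1=31$ points of $F$, leaving $31-16=15$ points of $F$ uncovered by any block. These $15$ points are necessarily holes of $\mathcal{S}_{16}$, and since the total hole count is $127-16\cdot 7=15$, the subspace $F$ contains every hole.

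For the reverse direction (``if''), suppose $\mathcal{S}_{16}$ is a partial plane spread of size $16$ and $F$ is a $5$-space containing all $15$ holes. For any block $B\in\mathcal{S}_{16}$, the dimension formula still forces $\dim(F\cap B)\geq 1$. The $15$ holes already occupy $15$ points of $F$, leaving $31-15=16$ points to be distributed among the $16$ blocks, each contributing at least one point. If some block satisfied $\dim(F\cap B)\geq 2$, it would contribute at least $3$ points to $F$, and then the total contribution would be at least $3+15=18>16$, a contradiction. Hence $\dim(F\cap B)=1$ for every $B$, giving $\dsubsp(F,B)=6$; combined with the partial spread property of $\mathcal{S}_{16}$, this shows $\mathcal{S}_{16}\cup\{F\}$ is a $(7,17,6)_2$ code of the required dimension distribution. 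No serious obstacle arises; the argument is a direct unpacking of the distance formula and the point count in $F$.
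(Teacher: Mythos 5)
Your proof is correct and follows essentially the same counting argument as the paper: the distance condition forces $\dim(F\cap B)\leq 1$ for each block $B$, and comparing the $31$ points of $F$ with the at most $16$ covered ones shows $F$ must absorb all $15$ holes. You additionally write out the converse direction explicitly (each block meets $F$ in exactly one point, by the dimension formula and the count $31-15=16$), which the paper leaves implicit; that part of your argument is also correct.
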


\begin{proof}
Let $C$ be a $(7,17,6)_2$ subspace code.
Then, by the subspace distance $6$, we have $C = \mathcal{S}_{16} \cup \{F\}$ with a partial plane spread $\mathcal{S}_{16}$ and a $5$-subspace $F$ such that for any block $B\in\mathcal{S}_{16}$, the intersection $B\cap F$ is at most a point.
So there remain at least $31 - 16 = 15$ points of $F$, which are holes of $\mathcal{S}_{16}$.
Since the total number of holes is only $15$, all the holes have to be contained in $F$.
\end{proof}

By the above lemma, we can enumerate all subspace codes of dimension distribution $(3^{16} 5^1)$ by extending all partial plane spreads $\mathcal{S}_{16}$ of hole space dimension $n = \dim\langle N\rangle \leq 5$ by a $5$-flat $F \supseteq N$.
Based on the classification of the partial plane spreads $\mathcal{S}_{16}$ in Theorem~\ref{thm:size16}, there are the following two cases:
\begin{enumerate}[(i)]
	\item $n = 5$. \\
	Here $F = \langle N\rangle$ is uniquely determined, so the $69$ types of partial plane spreads $\mathcal{S}_{16}$ with $n=5$ yield $69$ types of subspace codes.
	\item $n = 4$. \\
	Here, $F$ may be taken as any of the seven $5$-spaces in $\PG(V)$ passing through $\langle N\rangle$.
	For each of the $37$ partial plane spreads $\mathcal{S}_{16}$ with $n = 4$, we check computationally if there arise equivalences among the $7$ produced subspace codes.
	The resulting pattern is
	\[
		(7^1)^3
		(6^1 1^1)^1
		(4^1 3^1)^6
		(4^1 2^1 1^1)^9
		(4^1 1^3)^1
		(3^2 1^1)^{12}
		(2^3 1^1)^2
		(2^2 1^3)^3
	\]
	For example, the part $(3^2 1^1)^{12}$ means that among the $37$ partial plane spreads there are $12$, where out of the $7$ possibilities for $F$, $3$ lead to the same isomorphism type of a $(7,17,6)_2$ subspace code, $3$ others lead to a second isomorphism type and a single possibility for $F$ leads to a third isomorphism type.
	In other words, for $12$ of the $37$ partial plane spreads, the automorphism group partitions the set of seven $5$-flats through $N$ into $2$ orbits of length $3$ and $1$ orbit of length $1$.
	Together, we arrive at $107$ types of subspace codes.
\end{enumerate}

In this way, we get:
\begin{theorem}
	\label{thm:code_3_5}
	There are $176$ isomorphism types of $(7,17,6)_2$ subspace
        codes of dimension distribution $(3^{16} 5^1)$.  The $16$
        codewords of dimension $3$ form a partial plane spread in
        $\PG(6,2)$.  In $107$ cases the hole space dimension is $4$,
        and in $69$ cases the hole space dimension is $5$.
\end{theorem}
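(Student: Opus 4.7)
The plan is to build directly on the preceding lemma, which characterizes codes of dimension distribution $(3^{16}\,5^1)$ as pairs $(\mathcal{S}_{16},F)$ consisting of a partial plane spread of size $16$ together with a $5$-flat $F$ that contains every hole of $\mathcal{S}_{16}$. Since the hole set $N$ has $15$ points, the condition $N \subseteq F$ is equivalent to $\langle N\rangle \subseteq F$, so necessarily $n := \dim\langle N\rangle \leq 5$. By Theorem~\ref{thm:size16}, this leaves only the cases $n = 5$ and $n = 4$ (the case $n=6,7$ are excluded, and the case $n=5$ from part~\ref{thm:size16:ext:n5} is extendible). Thus enumeration splits cleanly into these two cases.

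For $n = 5$ the $5$-flat $F$ is forced to equal $\langle N\rangle$, so the assignment $\mathcal{S}_{16}\mapsto \mathcal{S}_{16}\cup\{\langle N\rangle\}$ is a bijection from the set of isomorphism types of partial plane spreads of size $16$ with $n=5$ to the set of isomorphism types of $(7,17,6)_2$ subspace codes of dimension distribution $(3^{16}\,5^1)$ having $\dim F = 5$. Theorem~\ref{thm:size16}\ref{thm:size16:ext}\ref{thm:size16:ext:n5} supplies exactly $69$ such partial plane spreads, contributing $69$ isomorphism types.

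For $n = 4$ the possible choices for $F$ are the seven $5$-flats of $\PG(V)$ that contain the solid $\langle N\rangle$. For each of the $37$ isomorphism types of $\mathcal{S}_{16}$ with $n=4$ (from Theorem~\ref{thm:size16}\ref{thm:size16:ext}\ref{thm:size16:ext:n4}), the stabilizer of $\mathcal{S}_{16}$ acts on this $7$-element set, and the isomorphism types of resulting codes correspond bijectively to the orbits. The counting thus reduces to determining the orbit structure of $\Aut(\mathcal{S}_{16})$ on the seven $5$-flats through $\langle N\rangle$ for each of the $37$ types. Summing the orbit counts (with the distribution reported in the text as $(7^1)^3(6^1\,1^1)^1(4^1\,3^1)^6(4^1\,2^1\,1^1)^9(4^1\,1^3)^1(3^2\,1^1)^{12}(2^3\,1^1)^2(2^2\,1^3)^3$) yields $3\!\cdot\!1+1\!\cdot\!2+6\!\cdot\!2+9\!\cdot\!3+1\!\cdot\!4+12\!\cdot\!3+2\!\cdot\!4+3\!\cdot\!4 = 107$ isomorphism types.

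Adding the two contributions gives the total $69+107 = 176$, with the two subtotals matching the hole-space-dimension split asserted in the theorem. The only genuinely non-trivial step is the orbit computation for the $37$ spreads in the case $n=4$: all seven $5$-flats through $\langle N\rangle$ need to be normalized and tested for equivalence under $\Aut(\mathcal{S}_{16})$, which is best carried out computationally (using the canonical-form machinery cited in the preliminaries). The remaining bookkeeping is then routine, and no further theoretical input is required beyond the preceding lemma and Theorems~\ref{thm:size16} and~\ref{thm:size17}.
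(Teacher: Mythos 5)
Your proposal follows essentially the same route as the paper: invoke the preceding lemma, split according to the hole space dimension $n\in\{4,5\}$ using Theorem~\ref{thm:size16}, take $F=\langle N\rangle$ for $n=5$ (giving $69$ types), and for $n=4$ reduce to a computational determination of the orbits of $\Aut(\mathcal{S}_{16})$ on the seven $5$-flats through $\langle N\rangle$. One small arithmetic slip: the pattern $(2^2\,1^3)$ has five orbits (two of length $2$ and three of length $1$), so its three spreads contribute $3\cdot 5=15$ types rather than $3\cdot 4=12$; as written your sum gives $104$, and with this correction it is indeed $3+2+12+27+4+36+8+15=107$.
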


\subsection{$(7,34,5)_2$ subspace codes}
As stated above, the dimension distribution is $(3^{17} 4^{17})$.
Thus, any $(7,34,5)_2$ subspace code is the union of a maximal partial
plane spread and the dual of a maximal partial plane spread in
$\PG(6,2)$.

For their construction, we computationally checked the $715$ types of
maximal partial plane spreads from Theorem~\ref{thm:size17} for the
extendibility to a $(7,34,5)_2$ subspace code by a maximal clique
search.  It turned out that this is possible only in $9$ cases, with
the number of extensions being $16$, $768$, $192$, $192$, $2824$,
$12$, $64$, $13$, and $6$.  Filtering out $\PGammaL$-isomorphic
copies, the number of extensions is $1$, $2$, $6$, $6$, $2$, $2$, $4$,
$13$ and $4$, comprising $39$ types in total.  Adding dualization to
the group of isomorphisms, we remain with $20$ codes.  This implies
that there is a single iso-dual code.

\section*{Acknowledgement}
The authors would like to acknowledge the financial support provided by COST -- \emph{European Cooperation in Science and Technology}.
The first author was also supported by the National Natural Science Foundation of China under Grant 61571006.
The second and the third author are members of the Action IC1104 \emph{Random Network Coding and Designs over GF(q)}.
The third author was supported in part by the grant KU 2430/3-1 -- Integer Linear Programming Models for Subspace Codes and Finite Geometry from the German Research Foundation.

%%\small

%%\bibliography{netcod16_full}

\end{document}